\renewcommand\div{\,\big|\,} 
  \renewcommand\a{\alpha}  \renewcommand\b{\beta}
 \newcommand\A{\mathrm{A}} \newcommand\AGL{\mathrm{AGL}} \newcommand\AGammaL{\mathrm{A\Gamma L}}  \newcommand\ASL{\mathrm{ASL}} \newcommand\ASigmaL{\mathrm{A \Sigma L}} \newcommand\Aut{\mathrm{Aut}}
\def\C{\mathbf{C}}   \newcommand\Cay{\mathrm{Cay}}   \newcommand\Cos{\mathrm{Cos}} 
\newcommand\D{\mathrm{D}}
 \newcommand\GammaL{\mathrm{\Gamma L}}    \newcommand\GL{\mathrm{GL}} \newcommand\GO{\mathrm{GO}} \newcommand\GU{\mathrm{GU}} \newcommand\Ga{{\it \Gamma}}
\newcommand\M{\mathrm{M}}  \renewcommand \mod{{\rm mod~}}
\newcommand\N{\mathbf{N}}
 \renewcommand\O{\mathrm{O}} 
  \newcommand\PGL{\mathrm{PGL}}   \newcommand\PGU{\mathrm{PGU}}  \newcommand\POmega{\mathrm{P\Omega}} \newcommand\PSL{\mathrm{PSL}}    \newcommand\PSp{\mathrm{PSp}} \newcommand\PSU{\mathrm{PSU}}
  \newcommand\PGSp{\mathrm{PGSp}}
 \newcommand\SL{\mathrm{SL}}   \newcommand\Sp{\mathrm{Sp}}    \def\Sy{\mathrm{S}}
\def\soc{ \mathrm{Soc}}
\newcommand\Z{\mathbf{Z}} \newcommand\ZZ{\mathbb{Z}}
\newcommand\mz{\mathbb{Z}}
\newcommand\ID{{\rm Inndiag}}
\newcommand{\cG}{\mathcal{G}}
 \numberwithin {equation} {section}
\newtheorem{theorem}{Theorem}[section]
\newtheorem{lemma}[theorem]{Lemma}
\newtheorem{proposition}[theorem]{Proposition}
\title[ graphs with a quasi-semiregular automorphism]
 { Prime-valent Symmetric graphs with a quasi-semiregular automorphism}
\author{Fu-Gang Yin, Yan-Quan Feng, Jin-Xin Zhou, A-Hui Jia}%
\address{Department of Mathematics, Beijing Jiaotong University, Beijing, 100044, China}
 \email{18118010@bjtu.edu.cn (F.-G. Yin), yqfeng@bjtu.edu.cn (Y.-Q. Feng), \linebreak
  jxzhou@bjtu.edu.cn (J.-X. Zhou),
    18121577@ bjtu.edu.cn (A-Hui Jia)
  } %
 \thanks{2020 \emph{Mathematics Subject Classification} 05C25(primary), 20B25, 05E18 (secondary).}
  \thanks{Corresponding Author: Yan-Quan Feng.}
 \thanks{This work was supported by the National Natural Science Foundation of China (11731002,12071023) and by the 111 Project of China (B16002).}
\begin{document}
\maketitle

\begin{abstract}
An automorphism of a graph is called {\em quasi-semiregular} if it fixes a unique vertex of the graph and its remaining cycles have the same length. This kind of symmetry of graphs was first investigated by Kutnar, Malni\v{c}, Mart\'{i}nez and Maru\v{s}i\v{c} in 2013, as a generalization of the well-known semiregular automorphism of a graph. Symmetric graphs of valency three or four, admitting a quasi-semiregular automorphism, have been classified in recent two papers.

Let $p\geq 5$ be a prime and $\Ga$ a connected symmetric graph of valency $p$ admitting a quasi-semiregular automorphism. In this paper, we first prove that either $\Ga$ is a connected Cayley graph $\Cay(M,S)$ such that $M$ is a $2$-group admitting a fixed-point-free automorphism of order $p$ with $S$ as an orbit of involutions, or $\Ga$ is a normal $N$-cover of a $T$-arc-transitive graph of valency $p$ admitting a quasi-semiregular automorphism, where $T$ is a non-abelian simple group and $N$ is a nilpotent group. Then in case $p=5$, we give a complete classification of such graphs $\Ga$ such that either $\Aut(\Ga)$ has a solvable arc-transitive subgroup or $\Ga$ is $T$-arc-transitive with $T$ a non-abelian simple group. We also construct the first infinite family of symmetric graphs that have a quasi-semiregular automorphism and an insolvable full automorphism group.
\end{abstract}

%\part{Use this type of header for very long papers only}
% use lowercase except for proper names

\section{Introduction}

Graphs studied in this paper are finite, connected, undirected and simple, and groups are finite.
Let $\Ga$ be a graph with vertex set $V(\Ga)$. An ordered pair of adjacent vertices of $\Ga$ is called an {\em arc}.
Denote by $\Aut(\Ga)$ the full automorphism group of $\Ga$. We say that $\Ga$ is {\em $G$-vertex-transitive} (or {\em $G$-arc-transitive}) if $G\leq\Aut(\Ga)$ and $G$ has a single orbit on $V(\Ga)$ (or the arcs of $\Ga$); we omit the prefix $G$ when $G=\Aut(\Ga)$. In particular, an arc-transitive graph is also called a \emph{symmetric} graph.

Let $G$ be a permutation group on a set $\Omega$. The stabilizer in $G$ of a point $\a\in\Omega$ is
the subgroup $G_\a=\{g\in G \mid \a^g=\a\}$ of $G$. If $G_\a$ is trivial for every $\a\in\Omega$, then we say that $G$ is \emph{semiregular} on $\Omega$, and \emph{regular} if in addition $G$ is transitive on $\Omega$. An element $g$ of $G$ is said to be \emph{semiregular} if $\langle g \rangle$, the subgroup generated by $g$, is semiregular on $\Omega$. Semiregular permutations play an important role in the study of  vertex-transitive graphs. For example, Biggs~\cite{Biggs-graphs}  used semiregular automorphisms to give concise descriptions of some interesting graphs. In 1981, Maru\v{s}i\v{c} asked whether every vertex-transitive digraph has a non-identity semiregular automorphism~\cite[Problem~2.4]{M81}.  Over the last forty years, numerous papers have been published on this question, and we refer the readers to \cite{CSS,DMMN,GV,Li,M2018,M2017,MS,MSV,Ro} for vertex-transitive graphs with certain valencies and orders, to \cite{FengHKKM,GPV,GX,KS,V,Xu} for edge-transitive graphs and to \cite[Section~1]{B-G} and the references therein for a broader survey on these kind of results. Despite consistent effort made, the above Maru\v{s}i\v{c}'s question is still wide open.

Our focus in this paper is on the so-called qusi-semiregular automorphism of a graph, which is a natural generalization of semiregular automorphism and defined below. A permutation group $G$ on a set $\Omega$ is said to be \emph{quasi-semiregular} if $G$ fixes a point, say $\a$, of $\Omega$, and is semiregular on $\Omega\setminus \{\a\}$. A non-identity permutation $g$ on a set $\Omega$ is said to be \emph{quasi-semiregular} if $\langle g \rangle$ is quasi-semiregular on $\Omega$.
As a matter of convenience, we exclude the identity permutation as a quasi-semiregular permutation, which follows from \cite{FengHKKM,YF} but not from \cite{Hu,KMMM}.
Quasi-semiregular permutation groups were first introduced in 2013 by Kutnar, Malni\v{c}, Mart\'{i}nez,  Maru\v{s}i\v{c}~\cite{KMMM}, where they initiated the study of graphs with a quasi-semiregular automorphism group, and later, Hujdurovi$\acute{\rm c}$~\cite{Hu} classified circulants which have a quasi-semiregular automorphism group with at most $5$ orbits. There are a large class of vertex-transitive graphs admitting quasi-semiregular automorphisms which are called GFRs. A \emph{graphical Frobenius representation} (GFR) of a Frobenius group $G$ (a non-regular transitive permutation group with every non-identity permutation fixing at most one point) is a graph whose automorphism group is isomorphic to $G$. Clearly, every GFR has a quasi-semiregular automorphism. In 2018, Doyle, Tucker, Watkins~\cite{GFR} started the study of Frobenius groups admitting a GFR, which has been an active topic of research; see, for example, \cite{KN,Spiga1,Spiga2}. Inspired by the facts listed above, we are naturally led to consider the following problem:

\vspace{0.2cm}
\noindent{\bf Problem A:} Characterize or classify vertex-transitive graphs with a quasi-semiregular automorphism.

The investigation of Problem~A was began by the second author of this paper together with Hujdurovi$\acute{\rm c}$, Kov$\acute{\rm a}$cs, Kutnar and Maru$\check{\rm s}$i$\check{\rm c}$ in \cite{FengHKKM}, where some crucial properties on restrictions to automorphism groups of such graphs were given. In contrast with semiregular automorphism of symmetric graphs, the condition for a symmetric graph having a quasi-semiregular automorphism is quite restrictive. Actually, \cite[Theorem 1.1]{FengHKKM} gave a classification of cubic symmetric graphs with a quasi-semiregular automorphism, and surprisingly, there are only three such graphs: the complete graph $K_4$, the Petersen graph and the Coxeter graph. As for the tetravalent symmetric graph, in a very recent paper~\cite{YF}, the first two authors of this paper used the results in \cite{FengHKKM} to prove that a tetravalent symmetric graph has a quasi-semiregular automorphism if and only if it is a Cayley graph on an abelian group of odd order. In this paper, we shall give a partial solution of Problem~A by characterizing prime valent symmetric graphs admitting a quasi-semiregular automorphism. Before stating our main results, we introduce some terminologies.

Let $G$ be a group. For a subset $S$ of $G$ with $1\not\in S$ and $S=S^{-1}:=\{s^{-1}: s \in S\}$, the \emph{Cayley graph} $\Cay(G,S)$ on $G$ with respect to $S$ is defined to be the graph with vertex set $G$ and edge set $\{\{g,sg\}\ |\ g \in G,s \in S\}$. It is easy to see that $\Cay(G,S)$ is connected if and only if $G=\langle S\rangle$.
Let $\Ga$ be an $G$-vertex-transitive graph and let $N$ be an intransitive normal subgroup of $G$. The {\it normal quotient graph } $\Ga_ N$ of $\Ga$ induced by $N$ is defined to be the graph with vertex set $\{ \alpha^N\ |\ \alpha \in V\Ga \}$, the set of all $N$-orbits on $V\Ga$, and with two orbits $B , C \in \{ \alpha^N\ |\ \alpha \in V\Ga \}$  adjacent in $\Ga_N$ if some vertex in $B$ is adjacent to some vertex in $C$ in $\Ga$. If $\Ga$ and $\Ga_N$ have the same valency, then $\Ga$ is called a \emph{normal $N$-cover} of  $\Ga_N$. A group $G$ is called \emph{almost simple} if the socle $\soc(G)$ of $G$, the product of all minimal normal subgroups of $G$, is a nonabelian simple group. An automorphism of a group is said to be \emph{fixed-point-free} if it fixes only the identity of the group.
%Note that \cite[Theorem 1.1]{FengHKKM} classified cubic symmetric graphs with a quasi-semiregular automorphism.

We now state our first main theorem.

\begin{theorem}\label{th:valp} Let $p\geq 5$ be a prime and let $\Ga$ be a connected symmetric graph of valency $p$ admitting a quasi-semiregular automorphism. Then one of the following holds:
\begin{enumerate}[\rm (1)]
 \item  If $\Ga$ has a solvable arc-transitive group of automorphisms, then $\Ga\cong\Cay(M,S)$, where $M$ is a $2$-group admitting a fixed-point-free automorphism $\beta$ of order $p$ such that $S$ is a generating set of involutions of $M$ and an orbit of $\langle \beta \rangle$;
\item If every arc-transitive group of automorphisms of $\Ga$ is insolvable, then $\Aut(\Ga)$ has a normal nilpotent subgroup $N$ such that $\Aut(\Ga)/N$ is almost simple, $\Ga$ is a normal $N$-cover of $\Ga_N$, and $\Ga_N$ is a connected $\soc(\Aut(\Ga)/N)$-arc-transitive graph of valency $p$ admitting a quasi-semiregular automorphism.
\end{enumerate}
\end{theorem}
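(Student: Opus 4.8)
The plan is to work inside the full group $A=\Aut(\Ga)$ together with the given quasi-semiregular automorphism $g$, with $\Fix(g)=\{\a\}$, and to route everything through a single normal-quotient reduction whose quasiprimitive type decides between (1) and (2). The first step is to pin down $g$: since $g$ fixes $\a$ and is semiregular on $V(\Ga)\setminus\{\a\}$, its orbits on the $p$ neighbours $\Ga(\a)$ all have length $|g|$, so $|g|$ divides the prime $p$; as $g\neq 1$ this forces $|g|=p$ and $\langle g\rangle$ regular on $\Ga(\a)$. Hence $A_\a$ is transitive of degree $p$ and contains the $p$-cycle induced by $g$, so by Burnside's theorem on transitive groups of prime degree the local group $A_\a^{\Ga(\a)}$ is either contained in $\AGL(1,p)$ (solvable, affine) or is $2$-transitive with nonabelian simple socle. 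This local dichotomy is the seed of the two cases.

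Next I would set up the common reduction. Let $N$ be a maximal normal subgroup of $A$ intransitive on $V(\Ga)$. Because $p$ is prime, the quotient either preserves the valency or collapses a neighbourhood to a point; after disposing of the degenerate bipartite/valency-one possibility (via a bi-quasiprimitive analysis), $N$ is semiregular, $\Ga$ is a normal $N$-cover of $\Ga_N$, and $A/N$ acts quasiprimitively on $V(\Ga_N)$ preserving valency $p$, with $\bar g:=gN$ again quasi-semiregular. A clean key point: $g$ acts fixed-point-freely on $N$ by conjugation, for if $1\neq n\in N$ satisfied $ng=gn$ then $(\a^n)^g=\a^{ng}=\a^{gn}=\a^n$, so $g$ would fix the vertex $\a^n\neq\a$ (using semiregularity of $N$), contradicting $\Fix(g)=\{\a\}$; Thompson's theorem then forces $N$ to be nilpotent. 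Using the known restrictions on quasiprimitive groups acting arc-transitively on graphs of prime valency, together with the single-point-fixing element $\bar g$ of order $p$ (which excludes the product-action and diagonal types), I would argue that the O'Nan--Scott type of $A/N$ is either HA (affine) or AS (almost simple).

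The two cases are then separated by solvability. If $A/N$ is of type HA with socle $\bar M\cong\ZZ_r^d$, then the preimage $M\trianglelefteq A$ is regular on $V(\Ga)$, so $\Ga\cong\Cay(M,S)$ with $S=\Ga(\a)$; since $\bar M$ is characteristic, $M$ is normal in $A$ and hence normalized by $g$, whose conjugation action $\b$ on $M$ is fixed-point-free of order $p$. Identifying vertices with $M$, the action of $g$ is exactly $\b$, so $S$ is a single $\langle\b\rangle$-orbit of size $p$; inversion on $M$ commutes with the automorphism $\b$, and since the centralizer of a $p$-cycle in $\Sym(p)$ has odd order and thus no involution, inversion is trivial on $S$, i.e. $S$ consists of involutions. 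By Thompson $M$ is nilpotent, and a nilpotent group generated by involutions is a $2$-group, giving conclusion (1); moreover a type-HA quotient always yields a solvable arc-transitive subgroup, namely the preimage of $\bar M\rtimes\langle\bar g\rangle$ (solvable because $N$ is nilpotent, and arc-transitive because $\bar M$ is regular and $\bar g$ is regular on a neighbourhood). Contrapositively, under the hypothesis of (2) the quotient cannot be HA, so it is AS; taking $T=\soc(A/N)$ and verifying that $T$ remains arc-transitive on $\Ga_N$ yields conclusion (2).

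The main obstacle is to make the case split match the solvable/insolvable hypothesis exactly — concretely, to prove that the hypothesis of (1) forces type HA, equivalently that when $A/N$ is of almost simple type $\Ga$ admits no solvable arc-transitive subgroup. By the normal-cover structure this descends to the assertion that the $T$-arc-transitive quotient $\Ga_N$, of prime valency and carrying a quasi-semiregular automorphism, has no solvable arc-transitive group; establishing this appears to require the classification of finite simple groups, via the admissible $2$-transitive or affine local actions of degree $p$ together with the classification of solvable (transitive) factorizations of almost simple groups, and this is where essentially all the work lies. Secondary technical points are the careful exclusion of the non-HA, non-AS quasiprimitive types for prime valency and the treatment of the bipartite/valency-one degeneracy in the quotient.
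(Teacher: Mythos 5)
Your individual ingredients are mostly sound: the order-$p$ argument, the fixed-point-free-conjugation trick plus Thompson, the Cayley analysis in the abelian-socle case (including the neat observation that inversion on $S$ centralizes a $p$-cycle and hence is trivial, so $S$ consists of involutions, and that a nilpotent group generated by involutions is a $2$-group). But the overall architecture creates a gap that you yourself flag and cannot close: because you run the normal-quotient reduction inside the full group $A=\Aut(\Ga)$, part (1) becomes contingent on the implication ``$\Ga$ has a solvable arc-transitive subgroup $\Longrightarrow$ the quasiprimitive quotient $A/N$ has affine type,'' equivalently ``almost simple type $\Longrightarrow$ no solvable arc-transitive subgroup,'' and you propose to attack this with CFSG and solvable factorizations of almost simple groups. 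Nothing of the sort is needed, and this is exactly where your route diverges from, and is weaker than, the paper's. The paper proves (1) entirely inside the given solvable arc-transitive group $G$: take $N$ maximal intransitive normal in $G$ (not in $A$) and $M/N$ a minimal normal subgroup of $G/N$. Solvability of $G$ forces $M/N$ to be elementary abelian; maximality of $N$ forces $M/N$ to be transitive; an abelian transitive group is regular (Proposition~\ref{regulargroup}); since $N$ is semiregular (non-bipartiteness from quasi-semiregularity plus Lorimer, Proposition~\ref{pro:normalquo}), $M$ is regular on $V(\Ga)$, so $\Ga=\Cay(M,S)$; Godsil's theorem (Proposition~\ref{pro:Cayley}) then places the quasi-semiregular element in $\Aut(M,S)$, after which your own involution/Thompson/$2$-group argument finishes. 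No statement about the type of $\Aut(\Ga)/N$ is ever needed for (1), so your ``main obstacle'' simply does not arise; the two parts of the theorem have complementary hypotheses, and each is proved using its own hypothesis, with no need to show the two structural conclusions exclude one another.

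A secondary gap: you assert that the quasi-semiregular element of order $p$ excludes all quasiprimitive types other than HA and AS, with no argument; this exclusion is genuinely needed for your part (2) but can be closed with tools you already have, and this is how the paper's Lemma~\ref{lm:valp2} proceeds. If the minimal normal subgroup of $A/N$ is $T_1\times\cdots\times T_m$ with $m\geq 2$ and the $T_i$ nonabelian simple, then $T_1$ is semiregular: either it is intransitive, hence (the quotient graph being non-bipartite) has at least three orbits and Lorimer applies, or it is transitive and then each $T_i$ is transitive, so $T_1$ centralizes the transitive group $T_2\times\cdots\times T_m$ and is semiregular by Proposition~\ref{regulargroup}. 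Your fixed-point-free-conjugation argument plus Thompson then makes $T_1$ nilpotent, contradicting simplicity; the same argument kills a regular nonabelian simple socle, while a regular \emph{abelian} socle is contradicted in case (2) directly by the hypothesis, since it produces the solvable arc-transitive subgroup $M\langle\a\rangle$. Finally, your opening ``local dichotomy'' via Burnside's theorem on groups of prime degree plays no role in the correct case division and is in fact misleading: $\cG_{2^4}$ has vertex stabilizer $\Sy_5$, whose local action is $2$-transitive with nonabelian simple socle, yet the graph lies squarely in case (1); the dichotomy that matters is global solvability of some arc-transitive subgroup, not the structure of the local group.
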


By Theorem~\ref{th:valp}~(1), for a prime $p\geq 5$, to classify $p$-valent graphs that admit a quasi-semiregular automorphism and a solvable arc-transitive group of automorphisms, it is equivalent to classify finite $2$-groups admitting a fixed-point-free automorphism of order $p$ that has an orbit of involutions generating the $2$-group. Finite groups with a fixed-point-free automorphism of prime order have been investigated widely in group theory, and have many very restrictive properties. For example, a finite group with a fixed-point-free automorphism of prime order is nilpotent by Thompson's theorem~\cite{Thompson1959}, and the nilpotency class of the group is bounded by
some function of the prime by Higman \cite{Higman1957}. For more results on finite groups with a fixed-point-free automorphism, we refer to Gorenstein's book \cite[Chapter 10]{DG} and Huppert's book \cite[Chapter X, \S11]{Huppert82b}. %Based on this, we would like to pose the following problem. \medskip

%\noindent{\bf Problem~B}\ Classify finite $2$-groups admitting a fixed-point-free automorphism of order $p$ that has an orbit of involutions generating the $2$-group, where $p\geq 5$ is a prime. \medskip

Theorem~\ref{th:valp}~(2) suggests that, for a prime $p\geq 5$, to classify $p$-valent symmetric graphs that admit a quasi-semiregular automorphism and has no solvable arc-transitive group of automorphisms, we should first determine $T$-arc-transitive $p$-valent graphs with a quasi-semiregular automorphism, where $T$ is a non-abelian simple group.

%Let $\Ga$ be a pentavalent symmetric graph admitting a quasi-semiregular automorphism. Then Theorem~\ref{th:valp} can be greatly improved, and both the graphs $\Ga$ and $\Ga_N$ can be definitely classified depending on whether or not $\Ga$ has a solvable arc-transitive group of automorphisms, respectively. Note that by  Theorem~\ref{th:valp}~(2), $\Ga_N$ is $T$-arc-transitive for some non-abelian simple group $T$.

Our next theorem is a refined form of Theorem~\ref{th:valp} in case $p=5$, and it completely determines pentavalent symmetric graphs $\Ga$ admitting a quasi-semiregular automorphism such that either $\Aut(\Ga)$ has a solvable arc-transitive subgroup or $\Ga$ is $T$-arc-transitive with $T$ a non-abelian simple group.

\begin{theorem}\label{th:val5}
Let $\Ga$ be a connected pentavalent symmetric graph with a quasi-semiregular automorphism and let $v\in V(\Ga)$. Then the  following  statements hold:
\begin{enumerate}[\rm (1)]
 \item $\Ga$ has a solvable arc-transitive group of automorphisms if and only if $\Ga=\cG_{2^4}$, $\cG_{2^8}$ or $\cG_{2^{12}}$;
\item If $\Ga$ is $T$-arc-transitive for a nonabelian simple group $T$, then either $\Ga=K_{6}$, or $\Ga=\cG_{n}$ for $n \in  \{36$, $66$, $126$, $396$, $1456$, $2016$, $22176\}$.
\end{enumerate}

\smallskip
The above graphs $\cG_n$, together with $\Aut(\cG_n)$ and $\Aut(\cG_n)_v$,  are listed in Table~\ref{tb:graphs} and constructed in Eq.~\ref{eq:eq6} and Eq.~~\ref{eq:eq7} of Section~\ref{sec:th3}:

\begin{table}[htb]
\caption{Information for $\cG_n$, $\Aut(\cG_n)$ and $\Aut(\cG_n)_v$} \label{tb:graphs}
\[
 \begin{array}{  l  ll|lll  } \hline
\mbox{Graph} & \Aut(\cG_n) & \Aut(\cG_n)_v & \mbox{Graph} & \Aut(\cG_n) & \Aut(\cG_n)_v   \\ \hline
\cG_{2^4} & \ZZ_2^4\rtimes\Sy_5 &\Sy_5  & \cG_{2^8} & \ZZ_2^8\rtimes \AGL_1(5) & \AGL_1(5) \\
\cG_{2^{12}} & \ZZ_2^{12}\rtimes \AGL_1(5) &  \AGL_1(5)  & \cG_{36}& \A_6.\mz_2^2& \ZZ_2 \times \AGL_1(5) \\
\cG_{66} &\PGL_2(11) &\ZZ_2 \times \D_{10} &
\cG_{126} &\Sy_9 &\Sy_4 \times\Sy_5\\
\cG_{396} &  \M_{11} & \AGL_1(5)&
\cG_{1456} & {}^2B_2(8) & \AGL_1(5)\\
 \cG_{2016} &\PSL_3(4).\mz_2^2 & \ZZ_2 \times \AGL_1(5) &
\cG_{22176} &\M_{22}.\mz_2 &\ZZ_2 \times \AGL_1(5) \\
 \hline
 \end{array}
\]
\end{table}
\end{theorem}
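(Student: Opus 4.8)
The plan is to prove the two parts separately, reducing each via Theorem~\ref{th:valp} (with $p=5$) to a purely group-theoretic classification and then pinning down the finitely many survivors, the tail end of which is completed by direct computation in \magma{}. \emph{For Part (1),} Theorem~\ref{th:valp}(1) says such a graph is $\Cay(M,S)$ with $M$ a $2$-group carrying a fixed-point-free automorphism $\beta$ of order $5$ and $S$ a $\langle\beta\rangle$-orbit of $5$ involutions generating $M$, so it suffices to classify the triples $(M,\beta,S)$. First I would pass to the Frattini quotient $V=M/\Phi(M)$: since $\beta$ acts coprimely and fixes only the identity of $M$, it fixes only $0$ in $V$, so $V$ is a fixed-point-free $\mathbb{F}_2\langle\beta\rangle$-module; as $2$ has order $4$ modulo $5$, the minimal polynomial of $\beta$ on $V$ is the irreducible factor $x^4+x^3+x^2+x+1$ of $x^5-1$, whence $V\cong\mathbb{F}_{16}^m$ with $\beta$ acting as scalar multiplication by a primitive fifth root of unity. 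The image of $S$ in $V$ generates $V$ and is a single $\langle\beta\rangle$-orbit, but the submodule generated by one vector is $\mathbb{F}_{16}\cdot\overline{s}$, of $\mathbb{F}_2$-dimension $4$; hence $m=1$, the minimal number of generators of $M$ is $4$, and $|M|=2^4\,|\Phi(M)|$.

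The crux of Part (1) is to bound $|\Phi(M)|$. Here I would filter $M$ by its lower central series $M=\gamma_1>\gamma_2>\cdots$: each factor $\gamma_i/\gamma_{i+1}$ is again a fixed-point-free $\mathbb{F}_2\langle\beta\rangle$-module, hence has $\mathbb{F}_2$-dimension a multiple of $4$, while by Higman's theorem \cite{Higman1957} the nilpotency class of $M$ is bounded by a function of $5$. Using that $M$ is $4$-generated, that the commutator maps $\gamma_i/\gamma_{i+1}\times\gamma_1/\gamma_2\to\gamma_{i+1}/\gamma_{i+2}$ are $\beta$-equivariant, and that $S$ must consist of involutions, one bounds $|M|$; a finite search in \magma{} then shows $|M|\in\{2^4,2^8,2^{12}\}$ and determines the three groups, after which $\Aut(\Ga)$ and $\Aut(\Ga)_v$ are computed to give the first three rows of Table~\ref{tb:graphs}. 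Finally I would verify conversely that each of $\cG_{2^4},\cG_{2^8},\cG_{2^{12}}$ does admit both a solvable arc-transitive group and a quasi-semiregular automorphism.

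\emph{For Part (2),} the first observation is that any quasi-semiregular automorphism $g$ of a pentavalent graph has order $5$: it fixes its unique vertex $v$ and permutes the five neighbours of $v$ among themselves with no fixed point, and since $\langle g\rangle$ is semiregular off $v$ each neighbour lies in an orbit of length $\mathrm{ord}(g)$, forcing $\mathrm{ord}(g)\mid 5$ and hence $\mathrm{ord}(g)=5$; in particular $|V(\Ga)|\equiv 1\pmod 5$. Next, the three graphs found in Part~(1) are not $T$-arc-transitive for any nonabelian simple $T$, since a simple subgroup of $\ZZ_2^{4k}\rtimes H$ (with $H\in\{\Sy_5,\AGL_1(5)\}$) meets the normal $2$-group trivially, hence embeds in $H$ and is far too small to be transitive on $2^{4k}$ vertices. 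Therefore a $T$-arc-transitive $\Ga$ with $T$ simple admits no solvable arc-transitive group, and Theorem~\ref{th:valp}(2) applies: $\Aut(\Ga)$ has a normal nilpotent subgroup $N$ with $\Aut(\Ga)/N$ almost simple and $\Ga$ a normal $N$-cover of a base graph $\Ga_N$ that is $\soc(\Aut(\Ga)/N)$-arc-transitive, pentavalent and again quasi-semiregular; identifying $\soc(\Aut(\Ga)/N)$ with $T$, the task becomes to classify the base graphs and then to show $N=1$.

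For the base case, $T_v$ lies in the known list of vertex-stabilizers of pentavalent symmetric graphs, so $5\mid|T_v|$ and $T$ has an element of order $5$ fixing a unique point of $T/T_v$. This ``unique fixed vertex'' condition is very restrictive: through the fixed-point count $\mathrm{fix}(g)=1$ it forces the $5$-part of $T$ into $T_v$ and bounds the index $|T:T_v|$, so that invoking the classification of pentavalent symmetric graphs with a nonabelian simple arc-transitive group cuts $T$ down to the finite candidate list $\A_6,\PSL_2(11),\A_9,\M_{11},\Sz(8),\PSL_3(4),\M_{22}$. For each admissible pair $(T,T_v)$ I would build the coset graph $\Cos(T,T_v,T_vgT_v)$, test connectivity and pentavalency, check for an order-$5$ element with a single fixed vertex, and compute $\Aut(\Ga)$ and $\Aut(\Ga)_v$ with \magma{}; this produces exactly $K_6$ together with $\cG_n$ for $n\in\{36,66,126,396,1456,2016,22176\}$, while a final argument that no nontrivial nilpotent $N$ admits a fixed-point-free order-$5$ lift compatible with $T$-arc-transitivity yields $N=1$ and $\Ga=\Ga_N$. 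The main obstacle is precisely this reduction in Part~(2): converting the abstract ``unique fixed vertex'' property into an effective bound that collapses the a priori infinite family of simple arc-transitive pentavalent graphs to the finite candidate list, which is where the structure of order-$5$ elements in the relevant simple groups does the real work.
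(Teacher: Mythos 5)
Your treatment of Part (1) is essentially workable and close in spirit to the paper's: the paper also reduces via Theorem~\ref{th:valp}(1) to classifying the pairs $(M,\beta)$, and your Frattini-quotient argument that $M$ is $4$-generated is a clean substitute for the paper's observation that $S=\{f_0,f_1,f_2,f_3,f_4\}$ with $f_0=(f_1f_2f_3f_4)^{-1}$ (via Proposition~\ref{fixed-point-free-atuo}). The difference is at the crux. You bound the class by citing Higman's theorem and then invoke ``a finite search''; but Higman's theorem as stated only asserts the \emph{existence} of a bound, so to make your search finite you need the explicit value of Higman's function at $5$ (which you never state), and you also need to justify that the relatively free object you would search through is finite (it is, since a finitely generated nilpotent group generated by involutions is finite, but this requires saying). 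The paper avoids all of this: it writes down the candidate presentations $G_{2^4},G_{2^8},G_{2^{12}}$, shows via the upper and lower central series that a group in this family of class $s$ has order at least $2^{4s}$, identifies the group at each class $s\leq 3$ with one of the three candidates, and kills $s\geq 4$ by a \magma{} computation showing the universal class-$4$ quotient has order only $2^{12}$ (Lemma~\ref{th:M}). So your Part (1) is acceptable as a sketch but under-specified at its key quantitative step.

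The genuine gap is in Part (2). First, your detour through Theorem~\ref{th:valp}(2) is both unnecessary and unresolved: the hypothesis already hands you a simple arc-transitive group $T$ acting on $\Ga$ itself, so no covering reduction is needed, and your closing claim that ``no nontrivial nilpotent $N$ admits a fixed-point-free order-$5$ lift compatible with $T$-arc-transitivity'' is precisely the kind of statement that needs proof -- Theorem~\ref{th:infinite} shows that nontrivial nilpotent covers of $K_6$ with quasi-semiregular automorphisms \emph{do} exist, and what must be argued is only that they are never $T$-arc-transitive for $T$ simple. Second, and more seriously, the step that collapses the infinitely many simple groups to a finite candidate list is missing. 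Your fixed-point count $\mathrm{fix}(x)=1$ does give (as in Proposition~\ref{pro:qsprop}(2)) that $\C_T(x)=\C_{T_v}(x)$, hence by Proposition~\ref{pro:val5stab} that $\C_T(x)\in\{\ZZ_5,\ZZ_{10},\ZZ_{15},\ZZ_{20},\ZZ_5\times\A_4,\ZZ_5\times\Sy_4\}$; but this does \emph{not} ``bound the index $|T{:}T_v|$'', and there is no ``classification of pentavalent symmetric graphs with a nonabelian simple arc-transitive group'' in the literature to invoke -- if such a classification existed, the theorem would be nearly immediate. The actual engine of the paper here is Lemma~\ref{lm:TCpair} (all of Section~\ref{sec:proofTC}): a CFSG-based determination, running through the alternating, sporadic, exceptional and classical groups using $|T|_5=5$ and the Burness--Giudici centralizer machinery, of every simple $T$ possessing an order-$5$ element with one of these six centralizers. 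That produces roughly two dozen pairs $(T,\C_T(x))$ (Table~\ref{tb:cT}), including groups such as $\A_8$, $\PSL_2(19)$, $\PSU_4(2)$ and $\POmega_7(3)$ that your candidate list omits and that must then be eliminated one by one in the coset-graph analysis; only after that does the \magma{} computation you describe take over. Without this lemma, or an equivalent substitute, your proposal does not yield a finite case analysis, so Part (2) is not proved.
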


By Theorems~\ref{th:valp} and \ref{th:val5}, to complete the classification of pentavalent symmetric graphs $\Ga$ admitting a quasi-semiregular automorphism, it remains to classify nilpotent normal covers of $K_6$ and one of $\cG_n$'s. However, this seems not easy. We do not expect to finish this task in the present paper, but we obtain the following theorem by proving that for each prime $p>5$ there exists an arc-transitive normal $\mz_p^4$-cover of $K_6$ which has quasi-semiregular automorphism.  Note that there are only two symmetric graphs of valency at most $4$ which admit a quasi-semiregular automorphism and an insolvable full automorphism group.

\begin{theorem}\label{th:infinite}
There are infinitely many connected pentavalent symmetric graphs with a quasi-semiregular automorphism and an insolvable full automorphism group.
\end{theorem}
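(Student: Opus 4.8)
The plan is, for each prime $p>5$, to build a single connected pentavalent symmetric graph $\Ga_p$ that is a normal $\ZZ_p^4$-cover of $K_6$, admits a quasi-semiregular automorphism, and has insolvable automorphism group. Since $|V(\Ga_p)|=6p^4$, distinct primes yield non-isomorphic graphs, so this produces the required infinite family. First I would fix a convenient arc-transitive model of the base: realise $V(K_6)$ as the projective line $\PG(1,5)$ and let $L=\PGL_2(5)\cong\Sy_5$ act in its natural $3$-transitive (hence arc-transitive) way, so that the vertex stabiliser is $H\cong\AGL_1(5)$ of order $20$, the stabiliser $H$ is $2$-transitive on the five neighbours, and the arc stabiliser has order $4$.

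Next I would take $V=\bF_p^4$ to be a $4$-dimensional irreducible $\bF_p[L]$-module; such a module exists because $\PGL_2(5)\cong\Sy_5$ has an irreducible character of degree $4$, and $p>5$ forces $p\nmid|L|$, so the $p$-modular and ordinary representations coincide. Forming $G=V\rtimes L$ and retaining $H\leq L\leq G$ (a complement, which survives since $p\nmid|H|$), I would set $\Ga_p=\Cos(G,H,HgH)$ for an element $g$ projecting to the $\PGL_2(5)$-involution swapping $\infty$ and $0$, twisted by a nonzero vector of $V$. Equivalently, $\Ga_p$ is the connected regular $\ZZ_p^4$-cover of $K_6$ given by an $L$-invariant surjection from the cycle space $H_1(K_6;\bF_p)$ onto $V$. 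That such a surjection exists is the one genuinely computational input: decomposing the edge space of $K_6$, i.e.\ the permutation module of $L$ on the fifteen $2$-subsets of $\PG(1,5)$, one finds it is multiplicity-free of the form $\mathbf{1}\oplus V\oplus V_5\oplus V_5'$ with the two $5$-dimensional irreducibles, the cut space being one of $V_5,V_5'$; hence $V$ is a constituent of the $10$-dimensional cycle space and supplies the required $L$-module quotient.

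By construction $\Ga_p$ is pentavalent, and since $L$ is arc-transitive on $K_6$ and lifts to $G=V\rtimes L\leq\Aut(\Ga_p)$, the graph is symmetric; because $G$ has composition factor $\mathrm{A}_5$, the group $\Aut(\Ga_p)$ is insolvable. To exhibit the quasi-semiregular automorphism, let $\sigma\in L$ have order $5$; in the $\PG(1,5)$ picture $\sigma$ fixes $\infty$ and cycles the other five points, so $\sigma\in H=\AGL_1(5)$. On the fibre over $\infty$, identified with $V$, the lift $\sigma$ acts by the module action $v\mapsto v^\sigma$, and since the eigenvalues of $\sigma$ on $V$ are the four primitive fifth roots of unity one has $V^\sigma=0$; thus $\sigma$ fixes exactly the vertex corresponding to $(0,\infty)$ and all its other orbits on this fibre have length $5$. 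On every other fibre $\sigma$ moves the base point, so it has no fixed vertex and only orbits of length $5$. Hence $\sigma$ fixes a unique vertex and is semiregular elsewhere, i.e.\ $\sigma$ is quasi-semiregular.

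The main obstacle is connectivity of the cover, equivalently $\langle H,g\rangle=G$: the twisting vector in $g$ must be chosen so that the voltages generate all of $V$. Here the irreducibility of $V$ as an $L$-module is exactly what is needed, since the $L$-submodule of $V$ generated by any nonzero vector is all of $V$; thus any nonzero twist gives a connected cover while preserving $L$-admissibility. The only other delicate point, the decomposition of the cycle space recorded above, is a finite character computation that can also be confirmed directly in \magma. Granting these, letting $p$ range over the primes exceeding $5$ yields infinitely many graphs $\Ga_p$ with all the required properties, which completes the proof.
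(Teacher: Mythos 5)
Your overall strategy coincides with the paper's: for each prime $p>5$, build a connected pentavalent arc-transitive $\ZZ_p^4$-cover of $K_6$ on which $\ZZ_p^4\rtimes\Sy_5$ acts with a quasi-semiregular element of order $5$ in a vertex stabiliser, and let $p$ vary. But your one ``genuinely computational input'' is computed with the wrong module, and this is a genuine gap. In odd characteristic the cycle space $H_1(K_6;\bF_p)$ is \emph{not} a subquotient of the unsigned permutation module on the fifteen $2$-subsets: it sits inside the module of $1$-chains, on which an element of $L\cong\PGL_2(5)$ that reverses an edge acts with an extra sign $-1$. The character of this signed module is $(15,-3,-1,0,0,1,0)$ on the classes $(e,(12),(12)(34),(123),(123)(45),(1234),(12345))$ (transpositions act as $2^3$ on the six points, so they reverse three edges and fix none), and it decomposes as $V'\oplus V_5'\oplus W_6$, where $V'$ is the standard module tensored with sign, $V_5'$ is one of the $5$-dimensional irreducibles and $W_6$ is the $6$-dimensional one. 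Subtracting the cut space $V_5'$, the cycle space is $V'\oplus W_6$: it contains no trivial summand, no $5$-dimensional summand, and---crucially---\emph{not} the module $V$ that your unsigned decomposition identifies (the standard module, with character value $+2$ on transpositions). For that $V$ there is no $L$-invariant surjection $H_1(K_6;\bF_p)\to V$, hence no connected cover at all; equivalently, in your coset description one can check directly that no nonzero twisting vector is simultaneously negated by the involution $t$ and centralised by the arc stabiliser $H\cap H^t$---a condition you also need, and gloss over, to get $g^2\in H$ and valency $|H:(H\cap H^g)|=5$. This is exactly the pitfall the paper flags when it remarks that the ``other'' semidirect product $M\rtimes\Sy_\Delta$ ``cannot produce the graph in question''.

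The gap is repairable: redoing the computation with oriented edges gives the correct $4$-dimensional constituent $V'=(\mathrm{standard})\otimes(\mathrm{sign})$, which the paper realises concretely by letting odd permutations of $\{a,b,c,d,e\}$ act as the permutation composed with inversion (its element $\beta$), and by exhibiting the explicit twisting vector $u=ab^{-1}c^{-1}d$ fixed by $\beta$ and inverted by $\gamma$. With $V'$ in place of $V$, the rest of your argument goes through: connectivity from irreducibility, quasi-semiregularity of the order-$5$ element because $1$ is not a root of $x^4+x^3+x^2+x+1$ over $\bF_p$ for $p\neq5$ (over $\bF_p$ this is the correct way to phrase your eigenvalue claim), insolvability of $\Aut(\Ga_p)$ because it contains the insolvable subgroup $V'\rtimes L$ (a legitimate shortcut---the paper proves the stronger statement $\Aut(\cG_{6p^4})\cong\ZZ_p^4\rtimes\Sy_5$, which the theorem does not actually require), and infinitude from the pairwise distinct orders $6p^4$.
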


This paper is organized as follows. After Section~2 with some preliminary results, we prove Theorems~\ref{th:valp}-\ref{th:val5} in Sections~3-4, respectively. To prove Theorem~\ref{th:val5}, we need to determine non-abelian simple groups that contain an element of order $5$ with its centralizer isomorphic to $\ZZ_5$, $\ZZ_{10}$, $\ZZ_{15}$, $\ZZ_{20}$, $\ZZ_5 \times\Sy_4$ or $\ZZ_5 \times \A_4$. This proof is a little long, and we deal with it in Section~5, which depends on the classification of finite non-abelian simple groups. In Section~6, we shall prove Theorem~\ref{th:infinite}, and in Section~7, we give some remarks.

To end this section we fix some notation which is used in this paper. The notation for groups in Table~\ref{tb:graphs} and in this paper is standard; see \cite{Atlas} for example. In particular, denote by $\ZZ_{m}$ and $\D_{m}$ the cyclic and dihedral group of order $m$, and by $\A_m$ and $\Sy_m$ the alternating and symmetric group of degree $m$, respectively. For a prime $p$,  we use $\mz_p^m$ to denote the elementary abelian group of order $p^m$, and also use $\ZZ_{p}$ to denote the finite field of order $p$. For two groups $A$ and $B$, $A\times B$ stands for the direct product of $A$ and $B$, $A.B$ for an extension of $A$ by $B$, and $A\rtimes B$ for a split extension or a semi-direct product of $A$ by $B$.

\section{Preliminary}\label{sec:pre}

Let $\Ga=\Cay(G,S)$ be a Cayley graph on a group $G$ with respect to $S$. For $g\in G$,
the right multiplication $R(g)$ is a permutation on $G$ defined by $h\mapsto hg$ for all $h\in G$. It is easy to see that $R(g)$ is an automorphism of $\Ga$, and $R(G)=\{R(g)\ |\ g \in G \}$ is a regular subgroup of $\Aut(\Ga)$. Furthermore, $\Aut(G,S)=\{\alpha:\alpha \in \Aut(G)\ | \ S^\alpha=S \}$ is also a group of automorphisms of $\Ga$ that fixes the identity in $G$, and $R(G)$ is normalized by $\Aut(G,S)$. Clearly, $R(G)\cap \Aut(G,S)=1$, and so we have  $R(G)\Aut(G,S)=R(G)\rtimes\Aut(G,S)\leq \Aut(\Ga)$. The following proposition is first proved by Godsil~\cite{Godsil}.

\begin{proposition} \label{pro:Cayley}
Let $\Ga=\Cay(G,S)$. Then $N_{\Aut(\Ga)}(R(G))=R(G)\rtimes\Aut(G,S)$.
\end{proposition}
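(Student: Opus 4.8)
The plan is to prove only the nontrivial inclusion $N_{\Aut(\Ga)}(R(G))\le R(G)\rtimes\Aut(G,S)$, since the reverse inclusion is already recorded in the discussion preceding the statement: $R(G)$ normalizes itself, it is normalized by $\Aut(G,S)$, and $R(G)\cap\Aut(G,S)=1$. Write $A=\Aut(\Ga)$ and $N=N_A(R(G))$. First I would exploit the regularity of $R(G)$ to split $N$ over the stabilizer of the identity vertex $1\in G$. Since $R(G)\le N$ acts transitively on $V(\Ga)=G$, a Frattini-type argument gives $N=R(G)N_1$, where $N_1=N\cap A_1$: for any $x\in N$ with $1^x=g$, the element $xR(g)^{-1}$ fixes $1$, so $x\in N_1R(G)$. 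As $R(G)$ is regular, only the identity fixes $1$, whence $R(G)\cap N_1=1$ and $N=R(G)\rtimes N_1$. Thus the whole statement reduces to showing $N_1\le\Aut(G,S)$.

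The key step is to show that every $\sigma\in N_1$ acts on the vertex set $G$ exactly as a group automorphism of $G$. Because $\sigma$ normalizes $R(G)$ and $g\mapsto R(g)$ is an isomorphism $G\to R(G)$ (with the convention $h^{R(g)}=hg$, so that $R(g_1)R(g_2)=R(g_1g_2)$), conjugation by $\sigma$ transports to an automorphism $\bar\sigma\in\Aut(G)$ determined by $\sigma^{-1}R(g)\sigma=R(g^{\bar\sigma})$. Using $1^{R(g)}=g$ together with $1^{\sigma^{-1}}=1$, I would then compute, for arbitrary $g\in G$,
\[
g^\sigma=1^{R(g)\sigma}=1^{\sigma^{-1}R(g)\sigma}=1^{R(g^{\bar\sigma})}=g^{\bar\sigma}.
\]
Hence $\sigma=\bar\sigma$ as permutations of $G$, so $\sigma\in\Aut(G)$.

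Finally I would pin down the action on $S$. The neighbourhood of the identity vertex in $\Cay(G,S)$ is exactly $S$, since the neighbours of $1$ are $\{s\cdot 1:s\in S\}=S$; as $\sigma$ is a graph automorphism fixing $1$, it permutes this neighbourhood, giving $S^\sigma=S$. Combined with $\sigma\in\Aut(G)$ this yields $\sigma\in\Aut(G,S)$, so $N_1\le\Aut(G,S)$ and therefore $N=R(G)\rtimes\Aut(G,S)$, as claimed. I expect no serious conceptual obstacle; the only point demanding care is the bookkeeping of left-versus-right conventions—the edge set is described through left multiplication $s\cdot g$ while $R(g)$ acts by right multiplication—so one must track these consistently to ensure both that $R$ is a homomorphism (rather than an anti-homomorphism) and that the identity $\sigma^{-1}R(g)\sigma=R(g^{\bar\sigma})$ and the displayed computation of $g^\sigma$ come out correctly.
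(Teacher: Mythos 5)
Your proof is correct. The paper itself offers no proof of this proposition---it is quoted directly from Godsil's paper---so there is no internal argument to compare against; your proof is the standard one for this classical fact. Each step checks out under the paper's conventions: the Frattini-style splitting $N=R(G)\rtimes N_1$ uses only the regularity of $R(G)$; the identity $\sigma^{-1}R(g)\sigma=R(g^{\bar\sigma})$ does define an automorphism $\bar\sigma$ of $G$ because $R(g_1)R(g_2)=R(g_1g_2)$ under the right-action convention $h^{R(g)}=hg$, so $g\mapsto \sigma^{-1}R(g)\sigma$ composed with $R^{-1}$ is an automorphism; the computation $g^{\sigma}=1^{\sigma^{-1}R(g)\sigma}=g^{\bar\sigma}$ is valid since $\sigma^{-1}$ fixes the vertex $1$; and the neighbourhood of $1$ in $\Cay(G,S)$ is indeed $S$ (using $S=S^{-1}$), giving $S^\sigma=S$ and hence $N_1\leq\Aut(G,S)$, while the reverse inclusion $R(G)\rtimes\Aut(G,S)\leq N_{\Aut(\Ga)}(R(G))$ is exactly what the paper's preceding discussion establishes.
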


Lorimer~\cite{Lor} proved the following proposition for symmetric graph of prime valency.

\begin{proposition}\label{pro:normalquo}
Let $\Ga $ be a connected $G$-arc-transitive graph of prime valency, with $G\leq \Aut(\Ga)$, and let $N ~ \unlhd~ G$ have at least three orbits on $V\Ga $. Then the following statements hold:
\begin{enumerate}[\rm (1)]
\item  $N$ is semiregular on $V\Ga $, $G/N \leq \Aut(\Ga _N)$, $\Ga _N$ is a connected $G/N$-arc-transitive graph, and $\Ga $ is a normal cover of $\Ga _N$;
\item  $G_v \cong (G/N)_{\Delta}$ for any $v \in V\Ga $ and $\Delta \in V(\Ga_N)$.
\end{enumerate}

\end{proposition}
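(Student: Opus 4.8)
The plan is to exploit the single structural fact that makes prime valency special: since $\Ga$ has valency $p$ (prime) and $G$ is arc-transitive, the stabiliser $G_v$ acts transitively on the neighbourhood $\Ga(v)$, so the local action $G_v^{\Ga(v)}$ is a transitive permutation group of prime degree $p$ and is therefore primitive. This primitivity will drive every assertion. Before using it I would record two routine preliminaries: the orbits of the normal subgroup $N$ form a block system for the transitive group $G$, so they all have equal length and the quotient $\Ga_N$ is well defined; and, because $N\unlhd G$, writing $u=v^x$ gives $N_u=N_v^x$, so the isomorphism type of the pair $(N_v,\Ga(v))$ does not depend on $v$, where $N_v:=N\cap G_v$.

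To prove that $N$ is semiregular I would examine $N_v$. As $N\unlhd G$ we have $N_v\unlhd G_v$, so its image $N_v^{\Ga(v)}$ is a normal subgroup of the primitive group $G_v^{\Ga(v)}$, hence either transitive or trivial. If it were transitive, all of $\Ga(v)$ would lie in a single $N$-orbit, forcing the block $v^N$ to be adjacent in $\Ga_N$ to exactly one other block; by connectivity $\Ga_N$ would then be $K_2$ and $N$ would have only two orbits, contradicting the hypothesis of at least three orbits. Hence $N_v^{\Ga(v)}=1$, i.e. $N_v$ fixes $\Ga(v)$ pointwise. A propagation argument along paths then gives $N_v=1$: for each neighbour $u$ one has $N_v\le N_u=N_v^x$ with $|N_u|=|N_v|$, so $N_v=N_u$, and $N_u$ likewise fixes $\Ga(u)$ pointwise; connectivity forces $N_v$ to fix every vertex, and faithfulness of $G\le\Aut(\Ga)$ yields $N_v=1$.

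Next I would set up the action of $G/N$ on $\Ga_N$. Since $N\unlhd G$, the group $G$ permutes the $N$-orbits, inducing $G\to\Aut(\Ga_N)$ with kernel the block-kernel $K$ (elements fixing every block setwise), and $N\le K\unlhd G$. To get $K=N$ I would apply the previous paragraph to $K$: as $K$ fixes each block setwise it has at least three orbits, hence $K$ is semiregular; but $N\le K$ is already regular on each block $B=v^N$, so $v^K=v^N=B$, whence $|K|=|v^K|=|B|=|N|$ and $K=N$. This gives the faithful embedding $G/N\le\Aut(\Ga_N)$. Connectivity of $\Ga_N$ is inherited from $\Ga$, and $G/N$ is arc-transitive on $\Ga_N$ because every arc of $\Ga_N$ lifts (by definition of quotient adjacency) to an arc of $\Ga$ while $G$ is arc-transitive. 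For part (2), an order count using $N\cap G_v=1$ shows the setwise stabiliser satisfies $G_B=NG_v$, so $(G/N)_B=G_B/N=NG_v/N\cong G_v/(G_v\cap N)=G_v$; transitivity of $G/N$ makes this isomorphism independent of the chosen $v$ and $\Delta$.

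Finally, to show $\Ga$ is a normal $N$-cover, i.e. that $\Ga$ and $\Ga_N$ share the valency $p$, I would prove that distinct neighbours of $v$ lie in distinct $N$-orbits. The relation ``lie in the same $N$-orbit'' is a $G_v$-invariant partition of $\Ga(v)$ (again using $N\unlhd G$), hence a block system for the primitive group $G_v^{\Ga(v)}$, so it is either all singletons or the whole of $\Ga(v)$; the latter once more collapses $\Ga_N$ to $K_2$ and is excluded by the three-orbit hypothesis. Thus the quotient map restricts to a bijection $\Ga(v)\to\Ga_N(v^N)$ and both valencies equal $p$. I expect the main point needing care to be exactly this valency-preservation together with the semiregularity step: both hinge on reading off block systems and normal subgroups from the primitive prime-degree local action, and on invoking the ``at least three orbits'' hypothesis precisely to rule out the degenerate collapse to $K_2$. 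The remaining assertions are bookkeeping with block systems and orbit counts.
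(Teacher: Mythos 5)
The paper itself gives no proof of this proposition: it is quoted as a known result of Lorimer \cite{Lor}, so there is no in-paper argument to compare yours against line by line. Judged on its own, your proof takes the natural and standard route: the local action $G_v^{\Ga(v)}$ is transitive of prime degree $p$, hence primitive, and primitivity kills every nontrivial $G_v$-invariant structure on $\Ga(v)$. Your semiregularity argument (normal image $N_v^{\Ga(v)}$ is trivial or transitive, then propagation along paths), your identification of the kernel $K$ of the $G$-action on $\Ga_N$ with $N$ (applying the semiregularity result to $K$ and counting $|K|=|v^K|=|B|=|N|$), and the computation $(G/N)_B=G_B/N=NG_v/N\cong G_v/(G_v\cap N)=G_v$ are all correct.

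There is, however, one small but genuine hole, and it occurs twice: you never exclude the possibility that a neighbour of $v$ lies in the block $B=v^N$ itself. In the semiregularity step you assert that if all of $\Ga(v)$ lies in a single $N$-orbit then $B$ is adjacent to ``exactly one other block'' (if that orbit were $B$ itself, the correct conclusion is instead that connectivity forces $N$ to be transitive); more seriously, in the valency step you pass from ``distinct neighbours of $v$ lie in distinct $N$-orbits'' to ``the quotient map restricts to a bijection $\Ga(v)\to\Ga_N(v^N)$''. Since the quotient graph is simple --- adjacency in $\Ga_N$ is defined only between \emph{distinct} orbits --- a neighbour of $v$ inside $B$ would contribute no edge of $\Ga_N$, and the valency of $\Ga_N$ could a priori drop to $p-1$ even though your singleton-classes claim holds. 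The patch is one line: $\Ga(v)\cap B$ is $G_v$-invariant, because $G_v$ fixes $B$ setwise ($B^g=(v^g)^N=v^N$ for $g\in G_v$, using $N\unlhd G$) and fixes $\Ga(v)$ setwise; as $G_v$ is transitive on $\Ga(v)$, either $\Ga(v)\cap B=\emptyset$ or $\Ga(v)\subseteq B$, and in the latter case connectivity gives $V\Ga=B$, i.e. $N$ transitive, contradicting the three-orbit hypothesis. With that observation inserted (it also cleans up the $K_2$ dichotomy in your semiregularity step), your proof is complete.
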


For a group $G$, we use $\Z(G)$ to denote the center of $G$, and for a subgroup $H$ of $G$, the normalizer and centralizer of $H$  in $G$ are denoted by $\N_G(H)$ and $\C_G(H)$, respectively. For $x \in G$, write $\C_G(x)=\C_G(\langle x\rangle)$, the centralizer of $x$ in $G$. The following  proposition gathers some properties of vertex-transitive graph with a quasi-semiregular automorphism; see \cite[Lemmas 2.1, 2.4 and 2.5]{FengHKKM}.

\begin{proposition} \label{pro:qsprop}
Let $\Ga$ be a connected $G$-vertex-transitive graph with a quasi-semiregular automorphism $x$. Then we have the following statements:
\begin{enumerate}[\rm (1)]
\item $\Ga$ is not bipartite;
\item Let $v$ be the vertex fixed by $x$. Then $\C_G(x)\leq \C_{G_v}(x)$;
\item Let $N \neq 1$ be a normal semiregular subgroup of $G$. Then $N$ is nilpotent, and by conjugation, $x$ is a fixed-point-free automorphism of $N$. Furthermore, if $N$ is intransitive and $\Ga$ is a normal cover of $\Ga_N$, then $xN$ is a quasi-semiregular automorphism of $\Ga_N$.
\end{enumerate}
\end{proposition}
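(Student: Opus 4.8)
\emph{Plan.} All three parts are driven by one elementary fact: since $x$ fixes a \emph{unique} vertex $v$, any automorphism commuting with $x$ must preserve the one-point set $\{v\}$. For part (1) I would argue by contradiction. As $\Ga$ is $G$-vertex-transitive it is regular, so in a bipartite $\Ga$ the two parts $A,B$ have $|A|=|B|$. The automorphism $x$ either swaps $A$ and $B$ or preserves each of them; a swap has no fixed vertex, contradicting $v^x=v$. If $x$ preserves the parts, write $m\geq 2$ for the common length of the nontrivial cycles of $x$; then semiregularity off $v$ forces $m\mid |A|-1$ (from $A\setminus\{v\}$) and $m\mid |B|$ (from $B$), whence $m\mid 1$, a contradiction. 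Hence $\Ga$ is not bipartite.

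For part (2), let $g\in\C_G(x)$. Then $(v^g)^x=(v^x)^g=v^g$ since $gx=xg$ and $v^x=v$, so $x$ fixes $v^g$; as $v$ is the only fixed vertex of $x$, we get $v^g=v$, i.e. $g\in G_v$. Thus $\C_G(x)\leq G_v$, and since every element of $\C_G(x)$ already centralizes $x$ this yields $\C_G(x)=\C_G(x)\cap G_v=\C_{G_v}(x)$. The same argument applies verbatim to \emph{any} quasi-semiregular automorphism fixing $v$, and I will record the companion observation that every non-identity power $x^k$ is again quasi-semiregular: it still fixes only $v$ (for $m\nmid k$ a rotation by $k$ of an $m$-cycle has no fixed point), and it is semiregular on $V\Ga\setminus\{v\}$ with cycle length $m/\gcd(m,k)$. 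Consequently $\C_G(x^k)\leq G_v$ whenever $x^k\neq 1$.

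For part (3), first $\langle x\rangle\cap N=1$, since any power of $x$ lying in $N$ fixes $v$ and hence lies in $N_v=1$ by semiregularity of $N$. Conjugation makes $x$ act on $N$, and if $n\in N$ is fixed by $x$ then $n\in\C_G(x)\cap N\leq G_v\cap N=N_v=1$ by (2); thus $x$ is a fixed-point-free automorphism of $N$. Applying the companion observation, every non-identity element of $\langle x\rangle$ centralizes no nontrivial element of $N$, so $H:=N\rtimes\langle x\rangle$ is a Frobenius group with kernel $N$. Choosing a prime $q\mid m$, the order-$q$ element $x^{m/q}$ acts fixed-point-freely on $N$, and Thompson's theorem~\cite{Thompson1959} gives that $N$ is nilpotent.

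Finally I would show $xN$ is quasi-semiregular on $\Ga_N$, whose vertices are the $N$-orbits (blocks). Since $x$ fixes $v$, the block $B_0=v^N$ is fixed by $xN$, and $xN\neq N$ because $x\notin N$. The heart of the matter is to prove that no non-identity $x^kN$ fixes any block other than $B_0$. Suppose $x^kN$ fixes $B=u^N$, say $u^{x^k}=u^n$ with $n\in N$; then $x^kn^{-1}$ fixes $u$. Here I invoke the standard Frobenius fact that the map $n'\mapsto (n'^{-1})^{x^k}n'$ is a bijection of $N$ (injectivity follows from $\C_N(x^k)=1$), which exhibits $x^kn^{-1}$ as an $N$-conjugate of $x^k$; hence $x^kn^{-1}$ is itself quasi-semiregular, with its unique fixed vertex lying in $B_0=v^N$. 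Since it fixes $u$, we conclude $u\in B_0$ and so $B=B_0$. This simultaneously establishes that $B_0$ is the only fixed block and that $\langle xN\rangle$ is semiregular off $B_0$, i.e. $xN$ is quasi-semiregular. \emph{The main obstacle} is exactly this last passage to the quotient: controlling which blocks are stabilized by powers of $x$ requires understanding point stabilizers inside the Frobenius group $H$, and the cleanest route is the conjugacy/bijectivity argument above rather than a direct orbit count.
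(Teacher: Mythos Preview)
The paper does not supply its own proof of this proposition; it is quoted with a reference to \cite[Lemmas~2.1, 2.4 and 2.5]{FengHKKM}. So there is no in-paper argument to compare against, and your task is simply to give a correct self-contained proof, which you have done.

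All three parts are sound. Part~(1) is the standard divisibility contradiction. Part~(2) is the one-line observation that anything commuting with $x$ must fix the unique fixed vertex. In part~(3), the key steps are all in order: $\langle x\rangle\cap N=1$ because $N$ is semiregular; every nontrivial power $x^k$ is again quasi-semiregular, so $\C_N(x^k)\le N_v=1$ and Thompson's theorem (applied to a prime-order power of $x$) gives nilpotency of $N$; and your Frobenius conjugacy argument---that $n'\mapsto ((n')^{-1})^{x^k}n'$ is a bijection of $N$, so $x^kn^{-1}$ is an $N$-conjugate of $x^k$ and hence has its unique fixed vertex in $v^N$---cleanly forces any block fixed by $x^kN$ to equal $B_0$. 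Since $\langle x\rangle\cap N=1$, the nontrivial elements of $\langle xN\rangle$ are exactly the images of nontrivial powers of $x$, so this really does establish semiregularity of $\langle xN\rangle$ on $V(\Ga_N)\setminus\{B_0\}$.

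One minor remark: your argument for the last clause of (3) never uses the hypothesis that $\Ga$ is a normal cover of $\Ga_N$; intransitivity of $N$ (so that $|V(\Ga_N)|\ge 2$) is all you need to conclude that $xN$ acts nontrivially. This is not a gap---you have proved slightly more than stated.
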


A group with a fixed-point-free automorphism is quite restrictive, and we refer to \cite[Chapter 10: Lemma~1.1]{DG} for the following result.

\begin{proposition} \label{fixed-point-free-atuo}
Let $\b$ be a fixed-point-free automorphism of a group $G$ of order $n$. Then for every $g$ in $G$, we have $gg^{\b}\cdots g^{\b^{n-1}}= g^{\b^{n-1}}\cdots g^{\b}g=1$.
\end{proposition}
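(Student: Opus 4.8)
The plan is to prove the stronger structural fact that the map $\psi\colon G\to G$ given by $\psi(x)=x^{-1}x^{\beta}$ is a bijection, and then to read off the identity from a telescoping product. First I would check that $\psi$ is injective: if $x^{-1}x^{\beta}=y^{-1}y^{\beta}$, rearranging yields $yx^{-1}=y^{\beta}(x^{\beta})^{-1}=(yx^{-1})^{\beta}$, so $yx^{-1}$ is fixed by $\beta$; since $\beta$ is fixed-point-free this forces $yx^{-1}=1$, i.e. $x=y$. Because $G$ is finite, $\psi$ is then a bijection, so every element of $G$ can be written in the form $x^{-1}x^{\beta}$. This is the only step where the fixed-point-free hypothesis is genuinely used.

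Given $g\in G$, I would use surjectivity of $\psi$ to write $g=h^{-1}h^{\beta}$ for some $h\in G$. As each power $\beta^{i}$ is an automorphism, $g^{\beta^{i}}=(h^{\beta^{i}})^{-1}h^{\beta^{i+1}}$, and hence
\[
gg^{\beta}\cdots g^{\beta^{n-1}}=\prod_{i=0}^{n-1}(h^{\beta^{i}})^{-1}h^{\beta^{i+1}}=h^{-1}h^{\beta^{n}},
\]
with all intermediate factors cancelling in consecutive pairs. Since $\beta$ has order $n$ we have $\beta^{n}=\mathrm{id}$, so $h^{\beta^{n}}=h$ and the product collapses to $h^{-1}h=1$, which is the first claimed identity.

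For the reversed product I would avoid repeating the argument and instead take inverses. Setting $P(g)=gg^{\beta}\cdots g^{\beta^{n-1}}$, one has $P(g)^{-1}=(g^{-1})^{\beta^{n-1}}\cdots(g^{-1})^{\beta}(g^{-1})$, which is exactly the reversed product $g^{\beta^{n-1}}\cdots g^{\beta}g$ evaluated at $g^{-1}$. Having shown $P(g)=1$ for all $g$, and noting that $g^{-1}$ runs over all of $G$ as $g$ does, I conclude that the reversed product equals $1$ as well. The only real insight is the reparametrization $g=h^{-1}h^{\beta}$: it converts an otherwise intractable non-commutative product into a telescoping one, and I expect establishing the bijectivity of $\psi$ (here $n$ is the order of $\beta$, so that $\beta^{n}=\mathrm{id}$) to be the single substantive point, with everything else formal.
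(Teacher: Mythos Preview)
Your argument is correct and is essentially the classical proof: the paper does not give its own argument for this proposition but simply cites \cite[Chapter~10, Lemma~1.1]{DG}, and the proof there proceeds exactly as you do, by showing $x\mapsto x^{-1}x^{\beta}$ is a bijection and then telescoping. Your deduction of the reversed identity by inverting and replacing $g$ with $g^{-1}$ is also standard and fine.
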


The following result is about centralizer of a transitive permutation group, and we refer to \cite[Theorem 4.2A]{DM-book}.

\begin{proposition} \label{regulargroup} Let $G$ be a transitive permutation group on a set $\Omega$. Then the centralizer $C_{S_\Omega}(G)$ of $G$ in the symmetric group $S_{\Omega}$ is semiregular, and if $G$ is abelian then $C_{S_\Omega}(G)=G$ is regular.
\end{proposition}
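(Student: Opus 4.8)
The plan is to verify both assertions directly from the definitions, as this is a standard fact about centralizers of transitive groups. First I would prove that $C := C_{S_\Omega}(G)$ is semiregular. Suppose $c \in C$ fixes some point $\alpha \in \Omega$; I aim to show $c = 1$. Given an arbitrary $\beta \in \Omega$, transitivity of $G$ provides $g \in G$ with $\alpha^g = \beta$. Since $c$ centralizes $G$ we have $gc = cg$, and therefore
$$\beta^c = (\alpha^g)^c = \alpha^{gc} = \alpha^{cg} = (\alpha^c)^g = \alpha^g = \beta,$$
using $\alpha^c = \alpha$ in the penultimate step. Hence $c$ fixes every point of $\Omega$, so $c = 1$. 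This shows the stabilizer in $C$ of any point is trivial, i.e. $C$ is semiregular on $\Omega$.

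For the second assertion, assume $G$ is abelian. Since every element of $G$ commutes with all of $G$, we have $G \leq C$. As $C$ is semiregular by the first part, its subgroup $G$ is also semiregular; being both semiregular and transitive, $G$ is therefore regular, so $|G| = |\Omega|$. Now $C$ contains the transitive group $G$, hence $C$ is itself transitive; being transitive and semiregular, $C$ is regular as well, giving $|C| = |\Omega| = |G|$. Combined with $G \leq C$, this forces $C = G$, and $G$ is regular, as claimed.

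As this argument is entirely elementary, there is no serious obstacle; the only point requiring mild care is the interplay of exponents in the displayed computation, where one must consistently read the action as a right action so that $\alpha^{gc} = (\alpha^g)^c$ and the commutation $gc = cg$ can be applied inside the exponent. The one structural fact used repeatedly---that a permutation group is regular if and only if it is both transitive and semiregular---is immediate from the orbit-stabilizer relation, so no external result beyond the basic theory of permutation groups is needed.
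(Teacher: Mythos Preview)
Your argument is correct and is precisely the standard elementary proof of this fact. The paper does not actually supply its own proof of this proposition; it merely cites \cite[Theorem~4.2A]{DM-book}, whose argument is essentially the one you have written.
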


Feng and Guo~\cite{Feng-Guo} determined vertex stabilizers of pentavalent symmetric graphs. By using {\sc Magma}~\cite{Magma}, we have the following proposition.

\begin{proposition}\label{pro:val5stab}
Let $\Ga$ be a connected pentavalent $G$-arc-transitive graphs with $v \in V\Ga$, and let $x \in G_v$ be of order $5$. Then $ \C_{G_v}(x)$ is solvable, and one of the following holds:

\begin{enumerate}[\rm (1)]
\item  $G_v \in \{ \ZZ_5,\D_{10},\AGL_1(5),\A_5,\Sy_5,\ASL_2(4), \ASigmaL_2(4)\}$ and  $ \C_{G_v}(x)=\ZZ_5$;

\item    $G_v \in \{\ZZ_2 \times \D_{10},\ZZ_2 \times\AGL(1,5) \}$  and $ \C_{G_v}(x)=\ZZ_{10}$;

\item   $G_v \in \{ \AGL_2(4),\AGammaL_2(4) \}$ and $ \C_{G_v}(x)=\ZZ_{15}$;

\item  $G_v = \ZZ_4 \times \AGL_1(5) $ and $ \C_{G_v}(x)=\ZZ_{20}$;

\item   $G_v \in \{ \A_4 \times \A_5,(\A_4 \times \A_5)\rtimes\ZZ_2,
\mz_2^6\rtimes\GammaL_2(4)\}$  and $ \C_{G_v}(x)=\A_4 \times \ZZ_{5}$;

\item  $G_v =\Sy_4 \times\Sy_5 $ and $ \C_{G_v}(x)=\Sy_4 \times \ZZ_{5}$.

\end{enumerate}

\end{proposition}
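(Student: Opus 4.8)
The plan is to reduce the statement to the classification of vertex stabilizers of connected pentavalent symmetric graphs due to Feng and Guo~\cite{Feng-Guo}, and then to a finite, purely group-theoretic centralizer computation. Concretely, since $\Ga$ is connected, pentavalent and $G$-arc-transitive, the stabilizer $G_v$ acts transitively on the five-element neighbourhood $\Ga(v)$, and the cited classification lists exactly the possible isomorphism types of $G_v$, namely the groups displayed in items (1)--(6). Granting this list as the sole external ingredient, it remains to identify, for each candidate $G_v$, the isomorphism type of $\C_{G_v}(x)$ for an element $x$ of order $5$, and to verify that this centralizer is solvable.

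First I would check that the claim is well posed. Inspecting the orders of the groups in (1)--(6), each is divisible by $5$ but not by $25$, so a Sylow $5$-subgroup of $G_v$ is cyclic of order $5$; hence any element $x$ of order $5$ generates such a Sylow subgroup, $\langle x\rangle\cong\ZZ_5$. By Sylow's theorem all subgroups of order $5$ are conjugate in $G_v$, and conjugate subgroups have conjugate, hence isomorphic, centralizers. Therefore $\C_{G_v}(x)$ is determined up to isomorphism by $G_v$ alone, and it suffices to compute a single representative in each of the finitely many cases.

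Next I would carry out the computation case by case. For the groups in item (1) it is transparent: in $\A_5$ and $\Sy_5$ the centralizer of a $5$-cycle is $\langle x\rangle\cong\ZZ_5$; in $\D_{10}$ and $\AGL_1(5)=\ZZ_5\rtimes\ZZ_4$ the order-$5$ element is acted on nontrivially by its complement, so again $\C_{G_v}(x)=\ZZ_5$; and in the affine groups $\ASL_2(4)=\mz_2^4\rtimes\SL_2(4)$ and $\ASigmaL_2(4)$ the $5$-element lies in $\SL_2(4)\cong\A_5$ and acts without nonzero fixed points on the natural module $\mz_2^4$ (it has no eigenvalue in $\bF_4$, since $5\nmid|\bF_4^*|$), whence it centralizes no nontrivial element of the socle and $\C_{G_v}(x)=\ZZ_5$. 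The remaining items (2)--(6) then follow from the direct-product and extension structure of $G_v$: the factors lying outside the insolvable $\A_5$- or $\GammaL_2(4)$-section centralize $x$ --- in the affine cases together with a fixed subspace of the elementary abelian normal subgroup --- and combine with $\langle x\rangle$ to yield $\ZZ_{10},\ZZ_{15},\ZZ_{20},\A_4\times\ZZ_5$ and $\Sy_4\times\ZZ_5$ respectively. Each centralizer is computed directly in {\sc Magma}~\cite{Magma}.

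Finally, solvability is immediate from the list of outcomes: every centralizer is either cyclic ($\ZZ_5,\ZZ_{10},\ZZ_{15},\ZZ_{20}$) or a direct product of $\ZZ_5$ with a solvable group ($\A_4\times\ZZ_5$ or $\Sy_4\times\ZZ_5$). Note in particular that even when $G_v$ itself is insolvable, passing to the centralizer of the $5$-element removes the insolvable $\A_5$-composition factor and leaves a solvable group. I expect the only delicate point to be not the arithmetic but the verification of the local action in the affine and $\GammaL_2(4)$-cases --- in particular that the $5$-element acts fixed-point-freely on $\mz_2^4$ in item (3) and with precisely the right fixed subspace in $\mz_2^6\rtimes\GammaL_2(4)$ --- since it is exactly this module behaviour that pins down the extra direct factor in the centralizer; confirming it, and confirming that the Feng--Guo list is complete and correctly transcribed, is where the care lies.
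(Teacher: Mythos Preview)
Your proposal is correct and follows exactly the paper's approach: invoke the Feng--Guo classification \cite{Feng-Guo} of stabilizers in connected pentavalent arc-transitive graphs to obtain the finite list of candidates for $G_v$, and then determine $\C_{G_v}(x)$ in each case with {\sc Magma}~\cite{Magma}. The paper's entire justification is the single sentence preceding the statement; your write-up in fact supplies more detail (the Sylow argument showing the centralizer is well defined up to isomorphism, and the module-theoretic reason the $5$-element is fixed-point-free on $\mz_2^4$) than the original.
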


Let $G$ be a group and $H$ be a core-free subgroup of $G$, that is, $H$ contains no nontrivial normal subgroup of $G$. Let $x \in G$ with $x \notin H$ and $HxH=Hx^{-1}H $. The \emph{coset graph} $\Ga=\Cos(G,H,HxH)$ on $G$ with respect to $H$ and $x$ is defined as the graph with vertex-set $[G{:}H]$, the set of all right cosets of $H$ in $G$, and edge-set $\{\{Hg,Hyg\}\ |\ g\in G, y\in HxH\}$. It is easy to see that $\Ga$ is connected if and only if $G=\langle x,H\rangle$. For every $g\in G$, the permutation $\hat{g}$ on $[G{:}H]$ defined by $Hz\mapsto Hzg$ for all $z\in G$,  is an automorphism of $\Ga$, and $\hat{G}:=\{\hat{g}\ |\ g\in G\}$ is an arc-transitive subgroup of $\Aut(\Ga)$. Since $H$ is core-free, one may easily prove $G\cong \hat{G}$. The following proposition is well-known, and we refer to Sabidussi~\cite{Sabi} for some details.

\begin{proposition}\label{pro:cosetcons}
Let $G$ be a group and $H$ a core-free subgroup of $G$. Let $x \in G$ with $x^2 \in H$ and $\langle H,x \rangle=G$. Then $\Cos(G,H,HxH)$ is $\hat{G}$-arc-transitive of valency $|H{:}(H \cap H^x)|$.

Let $\Ga$ be a connected $G$-arc-transitive graph, and for an edge $\{ v,u \}$ of $\Ga$, let $x \in G$ swap $u$ and $v$. Then $\Ga \cong \Cos(G,G_v,G_v xG_v )$, where $x\in \N_G(G_{vu})$, $x^2 \in G_v$ and $\langle G_v ,x\rangle=G$.

\end{proposition}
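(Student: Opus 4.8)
The plan is to establish the two assertions separately, with the common device that $\hat G$ acts on the right-coset space $[G{:}H]$ by right multiplication. For the first assertion I would first check that each $\hat g\colon Hz\mapsto Hzg$ is a well-defined permutation preserving adjacency, so $\hat G\leq\Aut(\Cos(G,H,HxH))$, and that the map $g\mapsto\hat g$ has kernel equal to the core of $H$ in $G$, which is trivial by hypothesis; thus $G\cong\hat G$. Here I would note that $x^2\in H$ forces $Hx^{-1}H=HxH$, so the prescribed edge set is symmetric and the coset graph is well defined. Next, $\hat G$ is transitive on $[G{:}H]$ because $\widehat{z_1^{-1}z_2}$ sends $Hz_1$ to $Hz_2$, and the stabiliser of the base vertex $H$ is precisely $\hat H$. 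Arc-transitivity then reduces to the transitivity of $\hat H$ on the neighbours of $H$, which are exactly the cosets $Hxh$ with $h\in H$; since $\hat H$ permutes these by right multiplication on the $h$-part, transitivity is immediate. Finally, an orbit--stabiliser count identifies the $\hat H$-stabiliser of $Hx$ with $\{h\in H: xhx^{-1}\in H\}=H\cap H^x$, yielding valency $|H{:}(H\cap H^x)|$.

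For the converse, let $\Ga$ be connected and $G$-arc-transitive with edge $\{v,u\}$, and pick $x\in G$ with $v^x=u$ and $u^x=v$; such an $x$ exists since $(v,u)$ and $(u,v)$ are both arcs. I would then verify the three stated properties of $x$. First, $x^2$ fixes $v$ and $u$, so $x^2\in G_v\cap G_u=G_{vu}\leq G_v$. Second, conjugation by $x$ interchanges $G_v$ and $G_u$ and hence stabilises $G_{vu}=G_v\cap G_u$, so $x\in\N_G(G_{vu})$. Third, $\langle G_v,x\rangle=G$: the orbit of $v$ under $\langle G_v,x\rangle$ contains $u=v^x$ and, since $G_v$ is transitive on the neighbours of $v$, all of those neighbours $u^h$ with $h\in G_v$; propagating along paths and using connectivity shows this orbit is all of $V\Ga$, and transitivity then forces $\langle G_v,x\rangle=G$. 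Since $G$ acts faithfully on $V\Ga$, the stabiliser $G_v$ is core-free, so $\Cos(G,G_v,G_vxG_v)$ is defined.

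To produce the isomorphism I would use the bijection $\phi\colon V\Ga\to[G{:}G_v]$ defined by $\phi(w)=G_vg$ for any $g$ with $v^g=w$; this is well defined and bijective because $v^{g'}=v^g$ if and only if $g'\in G_vg$, and it is equivariant in the sense that $\phi(w^{g_0})=\phi(w)^{\widehat{g_0}}$. Equivariance together with vertex-transitivity reduces the verification that $\phi$ preserves and reflects adjacency to a single local check at the base vertex: the neighbours of $v$ are the vertices $u^h=v^{xh}$ with $h\in G_v$, whose $\phi$-images are the cosets $G_vxh$, and by the computation in the first part these are exactly the neighbours of $G_v$ in $\Cos(G,G_v,G_vxG_v)$. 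Hence $\phi$ is a graph isomorphism.

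I expect the main obstacle to be bookkeeping rather than conceptual depth. Throughout, the left/right coset conventions and the direction of conjugation must be kept consistent, since a single transposition of sides would silently corrupt both the valency count $|H{:}(H\cap H^x)|$ and the equivariance of $\phi$; these are the points I would check most carefully. The only genuinely non-formal step is $\langle G_v,x\rangle=G$, where I rely on connectivity of $\Ga$ to show that the relevant orbit exhausts $V\Ga$.
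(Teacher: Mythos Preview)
Your proposal is correct and follows the standard argument. Note, however, that the paper does not supply its own proof of this proposition: it simply states the result as well known and refers to Sabidussi \cite{Sabi} for details. Your write-up is precisely the routine verification that underlies such citations --- the action of $\hat G$ by right multiplication, the identification of the neighbour set of $H$ with $\{Hxh:h\in H\}$, the orbit--stabiliser computation of the valency, and, for the converse, the equivariant bijection $w\mapsto G_vg$ together with the connectivity argument for $\langle G_v,x\rangle=G$. The only point worth a second glance is the conjugation convention: you identify the $\hat H$-stabiliser of $Hx$ with $\{h\in H:xhx^{-1}\in H\}$ and equate this with $H\cap H^x$; under the paper's right-action convention $H^x=x^{-1}Hx$ this is correct, since $xhx^{-1}\in H$ iff $h\in x^{-1}Hx$. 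With that checked, there is nothing to add.
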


It is easy to see $\Cos(G,H,HxH) \cong \Cos(G,H^\alpha,H^\alpha x^\alpha H^\alpha)$ for every $ \alpha \in \Aut(G)$. Given a group $G$ and a core-free subgroup $H$ of $G$, Proposition~\ref{pro:cosetcons} can be applied to find all non-isomorphic $G$-arc-transitive graphs with the vertex stabilizer of $G$ isomorphic to $H$, which can be done by {\sc Magma} when the orders of the graphs are not too large. For {\sc Magma} code, one may refer to \cite[Appendix]{DF19}.

\section{Proof of Theorem \ref{th:valp}}\label{sec:valp}

In this section, we investigate symmetric graphs of prime valency with a quasi-semiregular automorphism. For a positive integer $n$ and a prime $p$, denote by $n_p$ the highest $p$-power dividing $n$. The proof Theorem~\ref{th:valp} follows from three lemmas, of which the first is a simple observation but important.

\begin{lemma}\label{lm:5qs}
Let $p$ be prime and let $\Ga$ be a connected symmetric graph of valency $p$ admitting a quasi-semiregular automorphism. Then the quasi-semiregular automorphism has order $p$, $|V(\Ga)|\equiv 1 \ (\mod p)$,  $|\Aut(\Ga)|_p=p$, and every automorphism of order $p$ of $\Ga$ is quasi-semiregular.
\end{lemma}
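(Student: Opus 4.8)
The plan is to fix a quasi-semiregular automorphism $x$ with unique fixed vertex $v$, set $A=\Aut(\Ga)$, and analyse the action of order-$p$ elements on the neighbourhood $\Ga(v)$, using connectedness of $\Ga$. For the first two assertions: since $x$ fixes $v$ it lies in $A_v$ and permutes $\Ga(v)$; as $\langle x\rangle$ is semiregular on $V(\Ga)\setminus\{v\}\supseteq\Ga(v)$, each of its orbits on $\Ga(v)$ has length $|x|$, so $|x|$ divides $|\Ga(v)|=p$. Since $x\neq 1$ and $p$ is prime, $|x|=p$. Partitioning $V(\Ga)\setminus\{v\}$ into $\langle x\rangle$-orbits, all of length $p$, gives $|V(\Ga)|\equiv 1\pmod p$; in particular $p\nmid|V(\Ga)|$, and the $\langle x\rangle$-orbit on $\Ga(v)$ is a single $p$-cycle.

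The crux of the argument---and the one step that genuinely uses the graph structure rather than routine counting, so the place I expect to be the main obstacle---is the following local-to-global observation: if $z\in A$ has order $p$ and fixes a vertex $u$ together with every vertex of $\Ga(u)$, then $z=1$. The idea is that any permutation of the $p$-element set $\Ga(w)$ whose order divides $p$ is either trivial or a single $p$-cycle; hence whenever $z$ fixes a vertex $w$ and at least one of its neighbours, $z$ must fix all of $\Ga(w)$. Starting from $u$ and propagating this implication along edges, the connectedness of $\Ga$ forces $z$ to fix every vertex, contradicting $|z|=p$.

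Granting this, I would establish $|A|_p=p$ as follows. Let $A_v^{\Ga(v)}$ be the permutation group induced by $A_v$ on $\Ga(v)$. It embeds in $\Sym(\Ga(v))\cong\Sy_p$, whose Sylow $p$-subgroups have order $p$, and it contains the image of $x$, a $p$-cycle; hence $|A_v^{\Ga(v)}|_p=p$. The kernel $K$ of the restriction map $A_v\to A_v^{\Ga(v)}$ fixes $v$ and $\Ga(v)$ pointwise, so by the observation above $K$ contains no element of order $p$, whence $p\nmid|K|$ by Cauchy's theorem. Therefore $|A_v|_p=|K|_p\,|A_v^{\Ga(v)}|_p=p$, and since $A$ is vertex-transitive with $p\nmid|V(\Ga)|$, we obtain $|A|_p=|A_v|_p=p$.

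Finally, the last assertion follows from Sylow's theorem: because $|A|_p=p$, every subgroup of $A$ of order $p$ is a Sylow $p$-subgroup and so is conjugate to $\langle x\rangle$. Thus an arbitrary order-$p$ element $y$ equals $(x^k)^g$ for some $g\in A$ and some $k$ coprime to $p$. As $x^k$ generates $\langle x\rangle$ it is again quasi-semiregular, and quasi-semiregularity is preserved under conjugation---conjugating merely transports the unique fixed vertex and permutes the semiregular orbits---so $y$ is quasi-semiregular, which completes the plan.
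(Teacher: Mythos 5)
Your proposal is correct and takes essentially the same route as the paper: order $p$ from the equal-length $\langle x\rangle$-orbits on the $p$-element neighbourhood, the orbit count giving $|V(\Ga)|\equiv 1\pmod p$, then $|\Aut(\Ga)|_p=p$ from the local action at $v$ together with connectedness, and finally Sylow conjugacy for the last assertion. The only difference is that you spell out the kernel/propagation argument behind the paper's one-line claim that connectedness forces $p^2\nmid |A_v|$, a detail the paper leaves to the reader.
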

\begin{proof} Let $A=\Aut(\Ga)$ and let $\a$ be a quasi-semiregular automorphism of $\Ga$ fixing $v\in V(\Ga)$. Since $\Ga$ has prime valency $p$, $\langle\a\rangle$ is transitive on the neighbours of $v$ in $\Ga$ because all orbits of $\langle\a\rangle$ on the neighbourhood of $v$ have same length, and hence $\a$ has order $p$. It follows that $|V(\Ga)|\equiv 1 \ (\mod p)$. Note that $\Ga$ has valency $p$. Since $\Ga$ is arc-transitive, we have $p\div |A_v|$, and the connectedness of $\Ga$ implies that $p^2\nmid |A_v|$. Thus, $|\Aut(\Ga)|_p=p$, and all Sylow $p$-subgroups of $A$ have order $p$. By the Sylow Theorem, every Sylow $p$-subgroup of $A$ is conjugate to $\langle \a\rangle$, and hence every element of order $p$ of $A$ is quasi-semiregular, as the element of order $p$ generates a Sylow $p$-subgroup of $A$.
\end{proof}

Next we consider the case when a symmetric graph of prime valency $p$ admitting a quasi-semiregular automorphism contains a solvable arc-transitive group of automorphisms.

\begin{lemma}\label{lm:valp1}
 Let $\Ga$ be a connected symmetric graph of prime valency $p\geq 5$. Then $\Aut(\Ga)$ has a solvable arc-transitive subgroup admitting a quasi-semiregular automorphism if and only if $\Ga=\Cay(M,S)$, where $M$ is a $2$-group admitting a fixed-point-free automorphism of order $p$ with $S$ as an orbit and $S$ is a generating set of involutions of $M$.
 \end{lemma}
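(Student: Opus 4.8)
The plan is to establish both directions of the biconditional, beginning with the harder forward direction. Suppose $G\leq\Aut(\Ga)$ is a solvable arc-transitive subgroup admitting a quasi-semiregular automorphism $x$; by Lemma~\ref{lm:5qs} we may take $x$ of order $p$ fixing a vertex $v$. The first step is to locate a suitable normal subgroup of $G$ to quotient by. Since $G$ is solvable and transitive of valency $p\geq 5$, I would pass to a minimal normal subgroup $N$ of $G$, which (being solvable) is elementary abelian, say an $r$-group for some prime $r$. If $N$ had at most two orbits on $V(\Ga)$, then because $N$ is abelian and normal, transitivity of $G$ would force a bipartite-like or block structure; I would rule this out using that $\Ga$ is \emph{not} bipartite (Proposition~\ref{pro:qsprop}(1)) and that $|V(\Ga)|\equiv 1\ (\mathrm{mod}\ p)$ is odd-ish in the relevant sense, so in fact $N$ has at least three orbits. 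Then Proposition~\ref{pro:normalquo} applies: $N$ is semiregular, $\Ga$ is a normal $N$-cover of $\Ga_N$, and $G/N$ is arc-transitive on $\Ga_N$ with $G_v\cong (G/N)_\Delta$.

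The second step is to iterate this reduction to reach a graph whose arc-transitive group acts regularly on vertices. At the bottom of the normal-subgroup tower one obtains a quotient $\Ga^*$ on which a solvable group $G^*$ acts arc-transitively with $G^*_v$ having no nontrivial normal subgroup to strip off; combined with the stabilizer order being coprime to the valency structure, I expect to force $G^*$ to act regularly, so $\Ga^*$ is a Cayley graph $\Cay(M,S)$ with $M=G^*$ regular and $S$ the neighbourhood of the identity. Here the key point, carried along the whole tower via Proposition~\ref{pro:qsprop}(3), is that the image of $x$ remains a quasi-semiregular automorphism of order $p$ on each quotient. Thus $M$ admits, by conjugation, a fixed-point-free automorphism $\beta$ of order $p$ (Proposition~\ref{pro:qsprop}(3) gives fixed-point-freeness), and $S$, being the orbit of the neighbourhood under $\langle x\rangle$ acting as $\langle\beta\rangle$, is a single $\langle\beta\rangle$-orbit of size $p$. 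Since $\Ga$ has valency $p$ and $x$ of order $p$ acts on the neighbourhood of $v$ with a fixed point only at $v$ itself, the $p$ neighbours form one regular $\langle\beta\rangle$-orbit.

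The third step is to pin down the arithmetic of $M$. A finite group with a fixed-point-free automorphism of prime order is nilpotent by Thompson's theorem, so $M$ is nilpotent, hence the direct product of its Sylow subgroups, each invariant under $\beta$. Because $S=S^{-1}$ is an orbit of $\beta$ of odd size $p$ and $S$ generates $M$, I would argue that $S$ consists of involutions: an element $s\in S$ and its inverse $s^{-1}$ both lie in the $\langle\beta\rangle$-orbit $S$, and using Proposition~\ref{fixed-point-free-atuo} applied to $\beta$ together with the odd orbit length $p$, the pairing $s\mapsto s^{-1}$ has a fixed point, forcing $s=s^{-1}$, and then by homogeneity every element of $S$ is an involution. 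Consequently $M=\langle S\rangle$ is generated by involutions admitting a fixed-point-free automorphism of odd prime order, which forces $M$ to be a $2$-group: any odd-order Sylow subgroup would be $\beta$-invariant and disjoint from the involutions, yet $S$ generates $M$, so the odd part is trivial. This identifies $\Ga$ itself (not merely a quotient) as $\Cay(M,S)$ once I confirm the cover reduction was trivial at the $2$-group level, which holds because the semiregular normal subgroups stripped off are also $2$-groups absorbed into $M$.

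The reverse direction is routine: given such $M$, $S$, and $\beta$, the group $R(M)\rtimes\langle\beta\rangle\leq\Aut(\Ga)$ is solvable and arc-transitive by Proposition~\ref{pro:Cayley} (since $\beta\in\Aut(M,S)$ permutes $S$ transitively), and the automorphism induced by $\beta$ fixes the identity vertex while acting fixed-point-freely on $M\setminus\{1\}$, hence is quasi-semiregular. The step I expect to be the main obstacle is the first, namely cleanly ensuring that each minimal normal subgroup has at least three orbits so that Proposition~\ref{pro:normalquo} is applicable throughout the induction, and simultaneously controlling that the quasi-semiregular element survives to the regular quotient; the non-bipartiteness from Proposition~\ref{pro:qsprop}(1) and the congruence $|V(\Ga)|\equiv 1\ (\mathrm{mod}\ p)$ are the levers I would lean on to exclude the one- and two-orbit degenerate cases.
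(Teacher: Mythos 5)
Your reverse direction and your endgame are sound: the parity argument (inversion on a set of odd size $p$ fixes a point, so $S$ contains an involution, and transitivity of $\langle\beta\rangle$ on $S$ makes all of $S$ involutions) and the Thompson-theorem argument (nilpotent plus generated by involutions forces a $2$-group) are both correct, and in fact you make explicit the $2$-group step that the paper's own proof leaves implicit. The genuine gap is in your central reduction. First, your ``bottom of the tower'' claim is impossible as stated: $G^*$ is arc-transitive on a graph of valency $p\geq 5$, so its vertex-stabilizers have order divisible by $p$, and an arc-transitive group can never ``act regularly''; the regular group must be a proper normal subgroup of $G^*$, namely a \emph{transitive} minimal normal subgroup $\bar M$, which is regular by Proposition~\ref{regulargroup} because it is abelian and transitive --- not $G^*$ itself. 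Relatedly, you cannot ``rule out'' the one-orbit case for a minimal normal subgroup: a transitive minimal normal subgroup genuinely occurs (e.g.\ $\mz_2^4$ inside $\mz_2^4\rtimes\mz_5$ acting on $\cG_{2^4}$) and is the desired conclusion, not a degeneracy; non-bipartiteness only excludes the two-orbit case.

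Second, and more seriously, your lifting step from the quotient $\Ga^*$ back to $\Ga$ is circular. The lemma asserts that $\Ga$ \emph{itself} is $\Cay(M,S)$, so you must show that the full preimage $M\leq G$ of the regular subgroup $\bar M$ is regular on $V(\Ga)$. Your justification --- that ``the semiregular normal subgroups stripped off are also $2$-groups absorbed into $M$'' --- presupposes the conclusion: those subgroups are known to be $2$-groups only after one knows $M$ is a $2$-group generated by involutions, which in turn requires $M$ to be regular so that $S$ can be taken as the neighbourhood of the identity. The missing (easy) argument is purely permutation-theoretic: if $N$ is semiregular on $V(\Ga)$ and $M/N$ is regular on $V(\Ga_N)$, then $|M|=|N|\cdot|V(\Ga_N)|=|V(\Ga)|$ and $M$ is transitive, hence regular. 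The paper avoids your iteration entirely by taking $N$ to be a \emph{maximal intransitive} normal subgroup of $G$ and $M/N$ a \emph{minimal} normal subgroup of $G/N$: maximality of $N$ forces $M$ to be transitive, solvability makes $M/N$ elementary abelian and hence regular on $V(\Ga_N)$ by Proposition~\ref{regulargroup}, and semiregularity of $N$ (from Propositions~\ref{pro:qsprop}(1) and \ref{pro:normalquo}) then yields regularity of $M$ on $V(\Ga)$ in a single step, after which your involution and $2$-group arguments apply verbatim. With that substitution your outline closes; without it, the reduction does not.
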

\begin{proof} To prove the sufficiency, let $M$ be a $2$-group admitting a fixed-point-free automorphism of order $p$, say $\b$, such that $S$ is an orbit of $\langle \b\rangle$ and also $S$ is a generating set of involutions of $M$. Then $|S|=p$ and $\Cay(M,S)$
is a connected graph of valency $p$. Furthermore, $\b\in \Aut(M,S)\leq \Aut(\Cay(M,S))$, implying that $\b$ is an quasi-semiregular automorphism of $\Cay(M,S)$, and $R(M)\rtimes \langle \b\rangle$ is an solvable arc-transitive group of automorphisms of $\Cay(M,S)$.

To prove the necessity, assume that $\Ga$ be a connected symmetric graph of prime valency $p\geq 5$ and that $\Aut(\Ga)$ contains a solvable arc-transitive subgroup, say $G$, admitting a quasi-semiregular automorphism, say $\a$, of order $p$. Let $N$ be a maximal intransitive normal subgroup of $G$, and let $M/N$ be a minimal normal subgroup of $G/N$. Since $G$ is solvable, $G/N$ is solvable and so $M/N$ is elementary abelian.

By Proposition~\ref{pro:qsprop}~(1), $\Ga$ is not bipartite, and hence $N$ has at least three orbits. Then Proposition~\ref{pro:normalquo} implies that $N$ is semiregular and $\Ga_N$ is $G/N$-arc-transitive, where $G/N\leq \Aut(\Ga_N)$. Note that $M/N\unlhd G/N$ implies that $M\unlhd G$. By the maximality of $N$, $M$ is transitive $V(\Ga)$, so $M/N$ is an abelian transitive subgroup of $\Aut(\Ga_N)$. By Proposition~\ref{regulargroup}, $M/N$ is regular on $V(\Ga_N)$, and since $N$ is semiregular on $V(\Ga)$, $M$ is regular on $V(\Ga)$, that is, $\Ga$ is a Cayley graph on $M$. Then we may let $\Ga=\Cay(M,S)$ for some $1\not\in S \subseteq M$ with $S=S^{-1}$, and here we identify the right regular representation $R(M)$ with $M$. Since a conjugacy of $\a$ under every element of $G$ is quasi-semiregular, we may assume $\a\in G_1$, and since $\Ga$ is a connected graph of valency $p$, we have $|S|=p$ and $\langle S\rangle=M$.

Since $M\unlhd G$, by Proposition~\ref{pro:Cayley} we have $G_1\leq \Aut(M,S)$, and hence $\a\in \Aut(M,S)$. Since $\a$ is quasi-semiregular, every orbit of $\a$ on $S$ has the same length, and since $|S|=p$, $\a$ is a fixed-point-free automorphism of order $p$ of $M$ with $S$ as an orbit. Since $p$ is odd and $S=S^{-1}$, $S$ contains an involution, and therefore $S$ consists of involutions because $\langle \a\rangle\leq \Aut(M,S)$ is transitive on $S$.
\end{proof}

Now we consider the case when a symmetric graph of prime valency $p$ admitting a quasi-semiregular automorphism contains no solvable arc-transitive group of automorphisms.

\begin{lemma}\label{lm:valp2}
Let $\Ga$ be a connected symmetric graph of prime valency $p\geq 5$ admitting a quasi-semiregular automorphism, and let $\Aut(\Ga)$ have no solvable arc-transitive subgroup.
Then $\Aut(\Ga)$ has a normal nilpotent subgroup $N$ such that $\soc(\Aut(\Ga)/N)$ is a  nonabelian simple group. Furthermore, the quotient group $\Ga_N$ is a connected $\soc(\Aut(\Ga)/N)$-arc-transitive graph of valency $p$ admitting a quasi-semiregular automorphism.
 \end{lemma}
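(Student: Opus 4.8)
The plan is to pass to a quasiprimitive quotient and then pin down its socle. Write $A=\Aut(\Ga)$ and let $\alpha$ be the quasi-semiregular automorphism, which has order $p$ and generates a Sylow $p$-subgroup of $A$ by Lemma~\ref{lm:5qs}. First I would take $N$ to be a \emph{maximal} intransitive normal subgroup of $A$. Since $\Ga$ is not bipartite by Proposition~\ref{pro:qsprop}~(1), $N$ cannot have exactly two orbits, so it has at least three; Proposition~\ref{pro:normalquo} then yields that $N$ is semiregular, that $\Ga$ is a normal cover of $\Ga_N$, and that $\bar A:=A/N$ acts arc-transitively on the connected valency-$p$ graph $\Ga_N$. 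Proposition~\ref{pro:qsprop}~(3) shows $N$ is nilpotent (trivially so if $N=1$) and that $\bar\alpha:=\alpha N$ is a quasi-semiregular automorphism of $\Ga_N$; applying Lemma~\ref{lm:5qs} to $\Ga_N$ gives $|\bar A|_p=p$ and that \emph{every} automorphism of order $p$ of $\Ga_N$ is quasi-semiregular. The key structural gain is that, by maximality of $N$, every nontrivial normal subgroup of $\bar A$ is transitive, i.e.\ $\bar A$ is quasiprimitive on $V(\Ga_N)$.

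Next I would set $\bar T=\soc(\bar A)$, which is transitive by quasiprimitivity, and establish the dichotomy that $\bar T$ is \emph{either arc-transitive or regular}. This rests on a local argument: $\bar T_{\bar v}\unlhd \bar A_{\bar v}$ and $\bar A_{\bar v}$ is transitive on the $p$ neighbours of $\bar v$, so the $\bar T_{\bar v}$-orbits on this neighbourhood are permuted transitively by $\bar A_{\bar v}$ and hence all have equal size dividing the prime $p$. Thus $\bar T_{\bar v}$ is either transitive there, making $\bar T$ arc-transitive, or trivial there, in which case connectedness of $\Ga_N$ forces $\bar T_{\bar v}$ to fix every vertex, whence $\bar T_{\bar v}=1$ and $\bar T$ is regular.

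I would then eliminate the regular case using the non-solvability hypothesis. If $\bar T$ is regular it is a nontrivial semiregular normal subgroup, so Proposition~\ref{pro:qsprop}~(3) forces it to be nilpotent; being a socle it is characteristically simple, hence elementary abelian, say $\bar T\cong\mz_r^d$. Then $\langle\bar T,\bar\alpha\rangle=\bar T\rtimes\langle\bar\alpha\rangle$ is a solvable arc-transitive subgroup of $\bar A$: it is vertex-transitive because $\bar T$ is regular, and arc-transitive because $\bar\alpha$ is a $p$-cycle on the neighbourhood. Pulling this back to $A$, the subgroup $M\langle\alpha\rangle$, where $M\unlhd A$ is the preimage of $\bar T$, is solvable (as $M$ is nilpotent-by-abelian and $M\langle\alpha\rangle/M$ is cyclic) and is arc-transitive on $\Ga$ since it is vertex-transitive and contains the quasi-semiregular $\alpha$. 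This contradicts the hypothesis that $A$ has no solvable arc-transitive subgroup, so the regular case cannot occur.

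Finally, in the arc-transitive case I would show $\bar T$ is nonabelian simple. Arc-transitivity gives $p\mid|\bar T|$, and $\bar T$ is nonabelian because an abelian transitive group is regular by Proposition~\ref{regulargroup}. Writing $\bar T$ as the direct product of its minimal normal subgroups and letting $N_1$ be one with $p\mid|N_1|$, the bound $|\bar T|_p\le|\bar A|_p=p$ together with $N_1\cong T_0^{k}$ forces $k=1$ and $|T_0|_p=p$, so $N_1=T_0$ is simple; it contains a Sylow $p$-subgroup of $\bar A$, hence an order-$p$ (thus quasi-semiregular) element $\bar\beta$ fixing some vertex. For any \emph{other} minimal normal subgroup $N_2$ we have $p\nmid|N_2|$, $N_2$ is transitive, and $[\bar\beta,N_2]=1$; but then $\bar\beta$ centralizes a transitive group and so is semiregular by Proposition~\ref{regulargroup}, contradicting that it fixes a vertex. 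Hence $\bar T=T_0$ is nonabelian simple, $\Ga_N$ is $\soc(\bar A)$-arc-transitive, and combined with the recorded properties of $N$ and $\bar\alpha$ this is exactly the assertion. The main obstacle is this last step: extracting enough from the single arithmetic constraint $|\bar A|_p=p$ to collapse the quasiprimitive socle to one simple factor while excluding the regular socle, and it is the centralizer argument via Proposition~\ref{regulargroup} that does the essential work.
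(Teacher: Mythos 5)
Your proof is correct and follows the same skeleton as the paper's: a maximal intransitive normal subgroup $N$, Propositions~\ref{pro:qsprop} and \ref{pro:normalquo} to get semiregularity and nilpotency of $N$ and a quasi-semiregular automorphism of $\Ga_N$, elimination of the regular case by pulling back a solvable arc-transitive subgroup of $\Aut(\Ga)$, and Proposition~\ref{regulargroup} to kill superfluous normal subgroups. The differences lie in how the socle is collapsed to a single simple group. The paper takes one minimal normal subgroup $M/N$, shows it is arc-transitive, writes it as $T_1\times\cdots\times T_m$ with the $T_i$ nonabelian simple, and for $m\geq 2$ shows $T_1$ is semiregular (via Proposition~\ref{regulargroup} if $T_1$ is transitive, via Proposition~\ref{pro:normalquo} otherwise) and hence nilpotent by Proposition~\ref{pro:qsprop}~(3), a contradiction; uniqueness of the minimal normal subgroup is then handled separately (two commuting arc-transitive normal subgroups would both have to be regular). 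You instead work with the full socle under the quasiprimitivity observation, collapse $N_1\cong T_0^k$ to $k=1$ by the arithmetic $|\bar A|_p=p$, and rule out a second minimal normal subgroup by noting that a quasi-semiregular $\bar\beta\in N_1$ would centralize the transitive group $N_2$ and hence be semiregular by Proposition~\ref{regulargroup}, contradicting its fixed vertex --- arguably a cleaner finish that exploits the fixed point directly. One correction: your claim that the socle, \emph{being a socle}, is characteristically simple (hence elementary abelian) is false in general; a socle can be a product of minimal normal subgroups with non-isomorphic simple factors. This does not damage the argument, since nilpotency of $\bar T$, which you already have from Proposition~\ref{pro:qsprop}~(3), is all that is needed for solvability of $\bar T\langle\bar\alpha\rangle$ and of its preimage $M\langle\alpha\rangle$.
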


\begin{proof}
Let $A=\Aut(\Ga)$ and let $\a\in A$ be a quasi-semiregular automorphism. By Lemma~\ref{lm:5qs}, $\a$ has order $p$. Let $N$ be a maximal intransitive normal subgroup of $A$. By Proposition~\ref{pro:qsprop}~(1), $\Ga$ is not bipartite, and $N$ has at least three orbits on $V(\Ga)$. By Proposition~\ref{pro:normalquo}, $N$ is semiregular and $\Ga$ is a normal $N$-cover of $\Ga_N$ with $A/N$ as an arc-transitive group of automorphisms of $\Ga_N$, and by Proposition~\ref{pro:qsprop}~(3), $N$ is nilpotent and $\Ga_N$ has a quasi-semiregular automorphism. Clearly, $\Ga_N$ is connected and has valency $p$. By Lemma~\ref{lm:5qs}, to finish the proof, the only thing left is to show that $A/N$ is almost simple and $\Ga_N$ is $\soc(A/N)$-arc-transitive.

Let $M/N$ be a minimal normal subgroup of $A/N$. Then $M\unlhd A$ and $N<M$. Since $N$ is a maximal intransitive normal subgroup of $A$, $M$ is transitive on $V(\Ga)$ and hence $M/N$ is transitive on $V(\Ga_N)$. If $M/N$ is regular on $V(\Ga_N)$, Proposition~\ref{pro:qsprop}~(3) implies that $M/N$ is nilpotent,  yielding that $M$ is solvable as $N$ is nilpotent. It follows $M\langle \a\rangle$ is a solvable arc-transitive group of automorphisms of $\Ga$, contrary to hypothesis. Therefore, $M/N$ is not regular, and since $A/N$ is arc-transitive and $\Ga$ has prime valency $p$, $M/N$ is arc-transitive. The minimality of $M/N$ in $A/N$ implies that $M/N=T_1\times T_2\times\cdots\times T_m$, where all $T_i$, $1\leq i\leq m$, are isomorphic non-abelian simple groups.

Suppose $m\geq 2$. We first claim that $T_1$ is semiregular on $V(\Ga_N)$. Assume that $T_1$ is transitive on $V(\Ga_N)$. Then for every $1\leq i\leq m$, $T_i$ is transitive, and since $T_1$ commutes with $T_2\times\cdots\times T_m$ elementwise, by Proposition~\ref{regulargroup}, $T_1$ is regular. Now assume that $T_1$ is intransitive. Since $\Ga_N$ contains a quasi-semiregular automorphism, Proposition~\ref{pro:qsprop}~(1) implies that $\Ga_N$ is not bipartite, and hence $T_1$ has at least three orbits on $V(\Ga_N)$ because $T_1\unlhd M/N$ and $M/N$ is arc-transitive on $V(\Ga_N)$. By Proposition~\ref{pro:normalquo}, $T_1$ is semiregular. It follows that in both cases, $T_1$ is always semiregular. By Lemma~\ref{lm:5qs}, $M/N$ contains a quasi-semiregular automorphism of $\Ga_N$, and by Proposition~\ref{pro:qsprop}~(3), $N$ is nilpotent, a contradiction. Thus, $m=1$, that is, $M/N$ is a non-abelian simple group.

Suppose that $A/N$ has another minimal normal subgroup $L/N$ such that $M/N\not=L/N$. Then the above paragraph implies that $L/N$ is  arc-transitive and non-abelian simple. Note that $M/N$ commutes with $L/N$ elementwise. By Proposition~\ref{regulargroup}, both $M/N$ and $L/N$ are regular on $V(\Ga_N)$, a contradiction. Thus, $A/N$ is almost simple, and $\soc(A/N)=M/N$. Since $\Ga_N$ is $M/N$-arc-transitive, it is $\soc(A/N)$-arc-transitive.
\end{proof}

\begin{proof}[Proof of Theorem~\ref{th:valp}] Let $p\geq 5$ be a prime and let $\Ga$ be a connected symmetric graph of valency $p$ admitting a quasi-semiregular automorphism. By Lemma~\ref{lm:5qs}, every arc-transitive group of automorphisms of $\Ga$ has a quasi-semiregular automorphism, and then Theorem~\ref{th:valp}~(1) follows from Lemma~\ref{lm:valp1}, and Theorem~\ref{th:valp}~(2) follows from Lemma~\ref{lm:valp2}.
\end{proof}

\section{Proof of Theorem \ref{th:val5}} \label{sec:th3}

In this section, we prove Theorem \ref{th:val5}. We begin with a simple basic fact that will be frequently used later.

\begin{lemma}\label{lm:p45qs} Let $p\not=5$ be a prime and let $G=\langle a\rangle\times\langle b\rangle\times\langle c\rangle\times\langle d\rangle\cong\mz_p^4$. Then $G$ has an fixed-point-free automorphism $\a$ of order $5$ such that $a^\a=b$, $b^\a=c$, $c^\a=d$ and $d^\a=e$, where $e=a^{-1}b^{-1}c^{-1}d^{-1}$.
\end{lemma}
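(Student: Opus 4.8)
The plan is to realize $G\cong\mz_p^4$ as a $4$-dimensional vector space over the field $\ZZ_p$ with basis $\{a,b,c,d\}$, so that $\Aut(G)\cong\GL_4(\ZZ_p)$ and a group automorphism is fixed-point-free precisely when the associated linear map has no nonzero fixed vector. Since $\{a,b,c,d\}$ is a basis, there is a unique linear map $\a$ with $a^\a=b$, $b^\a=c$, $c^\a=d$, $d^\a=e$, where $e=a^{-1}b^{-1}c^{-1}d^{-1}$; this is the candidate automorphism, and its matrix with respect to $\{a,b,c,d\}$ is exactly the companion matrix of $\Phi_5(x)=x^4+x^3+x^2+x+1$.

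First I would check that $\a$ is an automorphism. Because $abcde=1$ and $G$ is abelian, we have $a=b^{-1}c^{-1}d^{-1}e^{-1}$, so the image set $\{b,c,d,e\}=\{a^\a,b^\a,c^\a,d^\a\}$ still generates $G$ and hence is itself a basis; therefore $\a$ is invertible and lies in $\Aut(G)$.

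Next I would pin down the order. A direct computation gives $e^\a=(a^{-1}b^{-1}c^{-1}d^{-1})^\a=b^{-1}c^{-1}d^{-1}e^{-1}=a$, so $\a$ permutes the generating set $\{a,b,c,d,e\}$ as the $5$-cycle $(a\,b\,c\,d\,e)$. Consequently $\a^5$ fixes each of $a,b,c,d$, and since these generate $G$ we get $\a^5=1$; as $a^\a=b\neq a$ and $5$ is prime, $\a$ has order exactly $5$.

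Finally, fixed-point-freeness. Writing a general element as $g=a^ib^jc^kd^l$, one computes $g^\a=b^ic^jd^ke^l=a^{-l}b^{\,i-l}c^{\,j-l}d^{\,k-l}$. Equating $g^\a=g$ coefficientwise in the basis $\{a,b,c,d\}$ yields $i=-l$, $j=i-l$, $k=j-l$ and $l=k-l$, which collapse to $5l=0$ in $\ZZ_p$. Since $p\neq 5$ this forces $l=0$, whence $i=j=k=0$ and $g=1$; thus $\a$ fixes only the identity. I expect no genuine obstacle: the content is elementary linear algebra, and the sole subtlety is making the hypothesis $p\neq 5$ surface, which it does exactly through the relation $5l=0$ (equivalently, through $\det(\a-1)=\Phi_5(1)=5\neq 0$ in $\ZZ_p$, using that $\a$ is the companion matrix of $\Phi_5$).
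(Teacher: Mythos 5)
Your proof is correct and follows essentially the same route as the paper's: define $\a$ on the basis, verify it is an automorphism because $\{b,c,d,e\}$ still generates $G$, observe $e^\a=a$ to get order $5$, and then derive $5l=0$ from the coefficientwise fixed-point equations, which forces $l=0$ since $p\neq 5$. The companion-matrix/$\det(\a-1)=\Phi_5(1)=5$ remark is a pleasant extra framing, but the substance of the argument coincides with the paper's.
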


\begin{proof} Since $G=\langle b\rangle\times\langle c\rangle\times\langle d\rangle\times\langle e\rangle$, where $e=a^{-1}b^{-1}c^{-1}d^{-1}$, we have that the map $a\mapsto b$, $b\mapsto c$, $c\mapsto d$ and $d\mapsto e$, induces an automorphism of $G$, say $\a$. It is easy to check that $e^\a=a$. Since $\langle a,b,c,d,e\rangle=G$, $\a$ has order $5$.

Let $v=a^{i_1}b^{i_2}c^{i_3}d^{i_4} \in G$ with $i_1,i_2,i_3,i_4 \in \ZZ_p$, such that $v^\a=v$. Here $\mz_p$ is viewed as the field of order $p$. Then
  \[ a^{i_1}b^{i_2}c^{i_3}d^{i_3}=(a^{i_1}b^{i_2}c^{i_3}d^{i_4})^{\a}=b^{i_1}c^{i_2}d^{i_3}(abcd)^{-i_4}=a^{-i_4}b^{i_1-i_4}c^{i_2-i_4}d^{i_3-i_4},\]
 and hence
 \[ i_1=-i_4 ,\ i_2 = i_1-i_4,\ i_3 = i_2-i_4,\ i_4 =i_3-i_4.\] It follows
 %$i_1=-i_4,\ i_2= -2i_4,\ i_3=-3i_4$ and $i_4= -4i_4$. Then
 $5i_4=0$. Since $p\not=5$, we have $i_4=0$, and then $i_1=i_2=i_3=0$. Thus $v=1$, implying that $\a$ is fixed-point-free.
 \end{proof}

Now let us introduce three groups, of which the first is the elementary abelian group $\mz_2^4$:

 \begin{eqnarray*}
&G_{2^4}=&\langle f_1\rangle\times\langle f_2\rangle\times\langle f_3\rangle\times\langle
f_4\rangle\cong \ZZ_2^4, \mbox{ and let } f_0=(f_1f_2f_3f_4)^{-1};  \\
&G_{2^8}=&\langle e_0, e_1,e_2,e_3,e_4,f_0,f_1,f_2,f_3,f_4\ \lvert\
  f_0=(f_1f_2f_3f_4)^{-1}, e_0=(e_1e_2e_3e_4)^{-1}, \\
 && e_i=[f_i,f_{i+1}],\ f_i^2=[e_i,e_j]=[e_i,f_j]=1, \mbox{ for all } i,j \in \ZZ_5 \rangle;\\
 &G_{2^{12}}=&\langle d_i,e_i,f_i: i \in  \ZZ_5 \ \lvert\   f_0=(f_1f_2f_3f_4)^{-1}, e_0=(e_1e_2e_3e_4)^{-1},  d_0=(d_1d_2d_3d_4)^{-1},\\
 & & e_i=[f_i,f_{i+1}], d_i=[e_i,f_{i+2}], f_i^2=[d_i,d_j]=[d_i,e_j]=[d_i,f_j]=1 \text{ \rm for all } i,j \in \ZZ_5 \rangle.
 \end{eqnarray*}

It is easy to see that for each $n\in \{2^4,2^8,2^{12}\}$, $G_n$ is generated by $f_1,f_2,f_3,f_4$.  By Lemma \ref{lm:p45qs}, the map $f_i\mapsto f_{i+1}$ for $i\in\mz_5$, induces a fixed-point-free automorphism of $G_{2^4}$, and this is also true for  $G_{2^8}$ and $G_{2^{12}}$ by using {\sc Magma}~\cite{Magma}. Furthermore, $f_i$ for all $i\in\mz_5$ are involutions of $G_n$ for every $n\in \{2^4,2^8,2^{12}\}$.

\begin{lemma}\label{th:M} Let $G$ be a finite $2$-group generated by four involutions $f_1$, $f_2$, $f_3$ and $f_4$, and let $f_0=(f_1f_2f_3f_4)^{-1}$. Suppose that $G$ has a fixed-point-free automorphism $\a$ of order $5$ such that $f_i^\a=f_{i+1}$ for all $i \in \ZZ_5$.
Then $G \in \{G_{2^4}, G_{2^8},G_{2^{12}}\}$.
 \end{lemma}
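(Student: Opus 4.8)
The plan is to analyse $G$ through its lower central series $G=\gamma_1(G)\geq\gamma_2(G)\geq\cdots$ together with the action of $\a$ on the successive quotients. First I would record the structural set-up: since $f_0 f_1 f_2 f_3 f_4 = 1$ and $f_i^\a=f_{i+1}$, the automorphism $\a$ cyclically permutes the five involutions $f_0,\dots,f_4$ and preserves this relation, while $G$, being a finite $2$-group, is nilpotent. Each $\gamma_i(G)$ is characteristic, hence $\a$-invariant, so $\a$ acts on every factor $V_i:=\gamma_i(G)/\gamma_{i+1}(G)$. Because $|\a|=5$ is coprime to $|G|$, coprime action gives $\C_{V_i}(\a)=\C_{\gamma_i(G)}(\a)\gamma_{i+1}(G)/\gamma_{i+1}(G)=1$, so $\a$ acts \emph{fixed-point-freely} on each $V_i$. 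Moreover every $V_i$ is elementary abelian: $V_i$ is the image of the bilinear commutator map $\gamma_{i-1}(G)/\gamma_i(G)\otimes G/\gamma_2(G)\to V_i$, and since $f_j^2=1$ makes $G/\gamma_2(G)$ a $2$-torsion group, the second variable already kills $2V_i$.

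Next I would pin down the module type of the $V_i$. Each $V_i$ is a finite $\bF_2[\langle\a\rangle]$-module with $\a$ of order dividing $5$ acting without nonzero fixed points. Since $2$ has order $4$ modulo $5$, the polynomial $x^4+x^3+x^2+x+1$ is irreducible over $\bF_2$, so the unique irreducible fixed-point-free $\bF_2[\ZZ_5]$-module is the $4$-dimensional module $U$ on which $\a$ has the four primitive fifth roots of unity as eigenvalues. As $\bF_2[\ZZ_5]$ is semisimple, each $V_i\cong U^{k_i}$, whence $|V_i|=2^{4k_i}$ and $|G|=2^{4\sum_i k_i}$; thus the whole problem reduces to bounding the integers $k_i$ and identifying the resulting extension. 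Since $G/\gamma_2(G)$ is generated by the four involutions $\bar f_1,\dots,\bar f_4$ and is fixed-point-free, $k_1=1$ and $V_1\cong U$, with $\bar f_0=\bar f_1+\bar f_2+\bar f_3+\bar f_4$.

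I would then climb the series using the commutator maps, which give surjections of $\langle\a\rangle$-modules $\Lambda^2 V_1\twoheadrightarrow V_2$ and $V_2\otimes V_1\twoheadrightarrow V_3$. A direct eigenvalue computation yields $\Lambda^2 U\cong U\oplus(\text{trivial})^2$ and $U\otimes U\cong U^3\oplus(\text{trivial})^4$; since $V_2$ and $V_3$ are fixed-point-free, their trivial constituents must die, forcing $k_2\leq 1$ and (when $V_2\cong U$) $k_3\leq 3$. If $k_2=0$ then $\gamma_2(G)=1$ and $G\cong\mz_2^4=G_{2^4}$. If $k_2=1$ then $V_2\cong U$, and killing the two-dimensional trivial part of $\Lambda^2 U$ is precisely what produces the relations $e_i=[f_i,f_{i+1}]$ ($i\in\ZZ_5$) spanning $V_2$ subject to $e_0=(e_1e_2e_3e_4)^{-1}$, matching $G_{2^8}$ at class $2$.

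The main obstacle is the third factor, since the a priori bound only gives $k_3\leq 3$, whereas the answer requires $k_3\leq 1$ and $\gamma_4(G)=1$. I would attack this by passing to the associated graded Lie $\bF_2$-algebra $L=\bigoplus_i V_i$, which is generated in degree $1$ by $U$ and is therefore a quotient of the free Lie algebra on $U$ modulo the ideal generated by the two degree-$2$ relations found above. Computing the degree-$3$ component of this quotient — a finite piece of linear algebra governed by the Jacobi identity, which one can do by hand or verify in {\sc Magma} — should collapse $U^3$ down to a single copy of $U$, so that $V_3\cong U$ or $0$, the surviving generators being the $\a$-orbit of $d_0:=[[f_0,f_1],f_2]$, and the degree-$4$ component should vanish. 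I would stress that the fixed-point-free order-$5$ hypothesis alone only bounds the nilpotency class by Higman's function $h(5)$, which exceeds $3$, so it is genuinely the combination with the involution condition (equivalently, that every $V_i$ has exponent $2$) that forces class at most $3$; this is the delicate point. Once the graded algebra is shown to be $U$, $U\oplus U$, or $U\oplus U\oplus U$, the power–commutator presentation of $G$ is completely determined by the module structure together with $f_i^2=1$, and comparison with the defining relations identifies $G$ as $G_{2^4}$, $G_{2^8}$ or $G_{2^{12}}$ respectively; the consistency and orders of these three presentations are exactly the facts already verified by {\sc Magma} in the text.
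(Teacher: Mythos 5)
Your set-up is sound, and in fact finer than the paper's: where the paper only extracts that every characteristic quotient of $G$ has order a power of $2^4$, you get the full $\bF_2[\ZZ_5]$-module structure of the lower central factors, $V_i\cong U^{k_i}$ with $k_1=1$ and $k_2\leq 1$, and your treatment of classes $1$ and $2$ matches the paper's (quotient of $G_{2^4}$ or $G_{2^8}$ plus an order count). The genuine gap is at the step you yourself flag as delicate: the computation you defer to \emph{cannot} produce the answer you claim. The degree-$3$ component of the free Lie algebra $F$ on $U$ has dimension $\tfrac{1}{3}(4^3-4)=20$ and, as a $\ZZ_5$-module, is $\mathrm{triv}^4\oplus U^4$ (its Brauer character vanishes on the nontrivial classes). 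The ideal generated by the $2$-dimensional trivial submodule $T\subset\Lambda^2U$ meets degree $3$ in $[T,F_1]$, which is a quotient of $T\otimes U\cong U^2$ and so has dimension at most $8$ and no trivial constituents. Hence the degree-$3$ part of $F/\langle T\rangle$ contains $\mathrm{triv}^4\oplus U^2$; even after fixed-point-freeness kills the trivial constituents, you only obtain $k_3\leq 2$, not $k_3\leq 1$. Degree $4$ is worse: $\dim F_4=60$ with module structure $\mathrm{triv}^{12}\oplus U^{12}$, while the degree-$4$ part of the ideal, spanned by elements $[[t,u],v]$ and $[t,[u,v]]$, has dimension at most $32+12=44$; the quotient therefore has dimension at least $16>12$, so at least one copy of $U$ survives in degree $4$ and fixed-point-freeness cannot eliminate it. So both of your claimed outcomes (``degree $3$ collapses to $U$'', ``degree $4$ vanishes'') are false for the Lie algebra you propose to compute, and no amount of {\sc Magma} will rescue that reduction.

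The reason is that your reduction discards exactly the relations that do the work, and your parenthetical ``equivalently, that every $V_i$ has exponent $2$'' is not an equivalence. Exponent $2$ of the graded pieces is automatic (it is built into working over $\bF_2$); the real force of the hypothesis is that the \emph{generators} are involutions, which in any group gives $[x,y,x]=[x,y]^{-2}$ and $[x,y,y]=[x,y]^{-2}$ whenever $x^2=y^2=1$. In graded terms these are degree-$3$ relations $[\bar e_i,\bar f_i]=[\bar e_i,\bar f_{i+1}]=\mathrm{sq}(\bar e_i)$ coupling the Lie bracket to the squaring maps $V_2\to V_3$ (in particular $[\bar e_i,\bar f_i+\bar f_{i+1}]=0$); they are not consequences of the degree-$2$ relations, and it is these, with their degree-$4$ analogues, that cut $U^2$ down to $U$ in degree $3$ and kill degree $4$. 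Repairing your argument means enriching the graded object by these mixed bracket--square relations (a $2$-restricted structure), at which point you are essentially doing what the paper does purely group-theoretically: a group of class $s\leq 3$ satisfies the defining relations of $G_{2^{4s}}$ and hence is a quotient of it, a group of class $4$ would be a quotient of an explicitly presented group $H$, and the order bound $|G|\geq 2^{4s}$ against the {\sc Magma} facts $|G_{2^{4s}}|=2^{4s}$ and $|H|=2^{12}<2^{16}$ forces the conclusion, with class $\geq 5$ reduced to class $4$ by passing to $G/\Z_{s-4}$. Your closing identification step (``the presentation is completely determined by the module structure together with $f_i^2=1$'') has the same issue in miniature: a graded Lie algebra does not determine the group, and the clean way to finish is the paper's quotient-plus-order comparison.
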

\begin{proof} Since $\a$ is a fixed-point-free automorphism of order $5$, we have $|G|\equiv 1(\mod 5)$ and hence $|G|$ is a power of $2^4$, that is, $|G|=2^{4m}$ for some positive integer $m$.
% If $G$ has a characteristic subgroup $N\not=1$, then $\a$ induces a fixed-point-free automorphism of $N$, and  $|N|$ is a power of $2^4$.
 If $N$ is a proper characteristic subgroup of $G$, then $\a$ induces a fixed-point-free automorphism of $G/N$, and $|G/N|$ is a power of $2^4$. Let $s$ be the  nilpotency class of $G$ and let \[1=\Z_{0}<\Z_1<\cdots<\Z_{s-1}<\Z_s=G \text{ and } 1=G^{(s)}<G^{(s-1)}<\cdots<G^{(1)}<G^{(0)}=G\]
be  the upper central series and lower central series of $G$, respectively. Then $\Z_1=\Z(G)$, $G^{(1)}=G'$, $\Z_i/\Z_{i-1}=\Z(G/\Z_{i-1})$ and $G^{(i)}=[G^{(i-1)},G]$ for $1\leq i\leq s$. By \cite[Section 3, Chapter 2]{DG}, $G^{(r)} \leq \Z_{s-r}$ for every $0 \leq r \leq s$. Noting that all $\Z_i$ are characteristic, we obtain that $|G|=2^{4m}$ with $m\geq s$.

By assumption, $f_0, f_1,f_2,f_3,f_4\in G$ have the following relations:
\begin{equation}\label{eq:f2}
 f_j^2=1 \text{ for all } j \in \ZZ_5, \  f_0=(f_1f_2f_3f_4)^{-1}
\end{equation}
Now we set
\begin{equation}\label{eq:edc}
 e_i=[f_i,f_{i+1}], \ d_i=[e_i,f_{i+2}], \  c_i=[d_i,f_{i+3}] \text{ for all } i \in \ZZ_5,
\end{equation}
Since $f_i^\a=f_{i+1}$, we have $f_i=f_1^{\a^{i-1}}$ and hence $e_i=e_1^{\a^{i-1}},\ d_i=d_1^{\a^{i-1}}$ and $c_i=c_1^{\a^{i-1}}$, for all $i\in\mz_5$.  By Proposition \ref{fixed-point-free-atuo}, we have
\begin{equation}\label{eq:edc0}
 e_0=(e_1e_2e_3e_4)^{-1}, \ d_0=(d_1d_2d_3d_4)^{-1},\ c_0=(c_1c_2c_3c_4)^{-1}.
\end{equation}

Assume $s=1$. Then $\Z(G)=G$ and $G$ is abelain. Since $|G|=2^{4m}$ and $G$ is generated by involutions $f_1,f_2,f_3$ and $f_4$, we obtain that $G\cong \mz_2^4$ and hence $G=G_{2^4}$.

Assume $s=2$. Then $e_i \in G'=G^{(1)} \leq \Z_{1}=\Z(G)$ for all $i\in\mz_5$, and so
\begin{equation}\label{eq:eq4}
[e_i,e_j]=[e_i,f_j]=1 \text{ for all } i,j \in \ZZ_5.
\end{equation}

By Eqs.(\ref{eq:f2})-(\ref{eq:eq4}), all elements $f_i$ and $e_i$ in $G$ satisfy the relations in
$G_{2^8}$, and therefore, $G$ is a quotient group of $G_{2^8}$, because these elements $f_i$ and $e_i$, $i\in\mz_5$, generate $G$. Since $|G|=2^{4m}$ with $m\geq s=2$ and $|G_{2^8}|=2^8$, we have $G=G_{2^8}$.

 Assume $s=3$. Then $d_i=[e_i,f_{i+2}]=[f_i,f_{i+1},f_{i+2}]  \in G^{(2)} \leq \Z_{1}=\Z(G)$, and so
 \begin{equation}\label{eq:eq5}
[d_i,d_j]=[d_i,e_j]=[d_i,f_j]=1 \text{ for all } i,j \in \ZZ_5.
\end{equation}
By the relations in Eqs.(\ref{eq:f2})-(\ref{eq:edc0}) and Eq.(\ref{eq:eq5}), $G$ is a quotient group of $G_{2^{12}}$, and since $|G|=2^{4m}$ with $m\geq s=3$, we have $G=G_{2^{12}}$.

To finish the proof, we only need to show that $s\leq 3$. We argue by contradiction and we suppose that $s\geq 4$.

Suppose $s=4$.  Then $G^{(3)} \leq \Z_{1}=\Z(G)$. Note that $[d_i,f_{j}]=[f_i,f_{i+1},f_{i+2},f_j] \in G^{(3)} \text{ for all } i,j \in \ZZ_5$, and in particular, $c_i=[d_i,f_{i+3}]\in G^{(3)} $. Then $G$ is a quotient group of the following group $H$:
\begin{eqnarray*}
 &H=&\langle c_i,d_i,e_i,f_i: i \in \ZZ_5 \ \lvert\   f_0=(f_1f_2f_3f_4)^{-1},~e_0=(e_1e_2e_3e_4)^{-1}, ~d_0=(d_1d_2d_3d_4)^{-1}, \\
 && c_0=(c_1c_2c_3c_4)^{-1},~e_i=[f_i,f_{i+1}],~ d_i=[e_i,f_{i+2}],~c_{i}=[d_i,f_{i+3}], \\
 && f_i^2=[c_i,c_j]=[c_i,d_j]=[c_i,e_j]=[c_i,f_j]=[d_i,f_j,f_{k}]=1 \text{ \rm for all } i,j,k \in \ZZ_5.
 \end{eqnarray*}
This is impossible because $|G|=2^{4m}$ with $m\geq s=4$ and $|H|=2^{12}$ by  {\sc Magma}~\cite{Magma}.

Suppose $s\geq 5$. Then $G/\Z_{s-4}=\langle f_0\Z_{s-4},f_1\Z_{s-4},f_2\Z_{s-4},f_3\Z_{s-4},
f_4\Z_{s-4} \rangle$ has nilpotent class $4$, and $\a$ induces a fixed-point-free automorphism of $G/\Z_{s-4}$, which is impossible by the above paragraph.  \end{proof}

For each $n\in \{2^4,2^8,2^{12}\}$,  define
\begin{equation}\label{eq:eq6}
\cG_n=\Cay(G_n,S), \text{ where } S=\{f_1,f_2,f_3,f_4,(f_1f_2f_3f_4)^{-1} \}.
\end{equation}
Now we are ready to prove the first part of Theorem~\ref{th:val5}.

 \medskip

\begin{proof}[Proof of Theorem\ref{th:val5}~{\rm (1)}]   To prove the sufficiency, let $n=2^4,2^8$ or $2^{12}$. By using  {\sc Magma}~\cite{Magma}, $G_n=\langle S\rangle$ has a fixed-point-free automorphism $\a$ of order $5$ such that $f_i^\a=f_{i+1}$ for all $i\in\mz_5$, where  $f_0= (f_1f_2f_3f_4)^{-1}$. Then $\a\in \Aut(G_n,S)$ and $R(G_n)\rtimes\langle \a\rangle$ is a solvable  arc-transitive group of automorphisms of $\cG_n$.

To prove the necessity, let $\Ga$ be a connected pentavalent symmetric graph which admits a quasi-semiregular automorphism and  a solvable arc-transitive group of automorphism.  By  Theorem~\ref{th:valp}~(1), $\Ga=\Cay(M,S)$, where $M$ is a $2$-group admitting a fixed-point-free automorphism $\beta$ of order $5$, and $S$ is an orbit of $\langle \beta \rangle$ consisting of involutions such that $\langle S \rangle=M$. Thus we may write $S=\{f_1,f_2,f_3,f_4,f_0\}$, where $f_i=f_1^{\beta^{i-1}}$ for all $i \in \ZZ_5$. By Proposition~\ref{fixed-point-free-atuo},  $f_0=(f_1f_2f_3f_4)^{-1}$, and by Lemma~\ref{th:M}, $M \in \{G_{2^4}, G_{2^8},G_{2^{12}}\}$ and $\Ga=\cG_{2^4},\cG_{2^8}$ or $\cG_{2^{12}}$.
\end{proof}

To prove the second part of Theorem~\ref{th:val5}, we first construct seven connected pentavalent $T$-arc-transitive coset graphs on a nonabelian simple group $T$, which contain quasi-semiregular automorphisms of order $5$.

For $n  \in \{ 36,66,126,396,1456,2016,22176\}$, define
\begin{equation}\label{eq:eq7}
\cG_n=\Cos(T, H, HxH),
\end{equation}
where $n,x,H$ and $T$ are given below:

\medskip

\begin{enumerate}[\rm (i)]
 \item $n=36$, $x=(1,3)(4,5)$, $H=\langle (1,5)(3,4), (1,5,4,6,3)\rangle \cong \D_{10} $, and $T=\langle H,x\rangle=\A_6 \leq\Sy_6$;

 \item $n=66$, $x=(1,5)(2,12)(3,9)(4,6)(7,10)(8,11)$, $H=\langle (1$, $12)(2$, $5)(3$, $11)(4$, $7)(6$, $10)(8$, $9)$, $(1,7,6,3,5)(2,11,10,4,12)\rangle \cong \D_{10}$, and $\PSL_2(11)\cong T=\langle H,x\rangle \leq\Sy_{12}$;

 \item $n=126$,  $x=(1,5)(2,6)(3,7)(4,8)$,
 $H=\langle (1,2)(3,4)$, $(1,2,3)$, $(5,6,7)$, $(7,8,9)$, $(1,2)(5,6)\rangle$ $\cong(\A_4\times\A_5)\rtimes\ZZ_2$, and $T =\langle H,x\rangle=\A_9 \leq\Sy_9$;

 \item $n=396$, $x= (1,5,2,7,10,6,11,9)(3,8)$, $H=\langle (1,11,10,2)(5,9,6,7)$, $(1$, $11$, $2$, $10$, $3)(4$, $6$, $7$, $9$, $5)\rangle$ $\cong\AGL_1(5)$, and $\M_{11}\cong T =\langle H,x\rangle \leq \Sy_{11}$;

 \item $n=1456$, $x=(1$, $22)(2$, $23)(3$, $62)(4$, $45)(5$, $31)(6$, $55)(7$, $37)(8$, $30)(9$, $38)(10$, $13)(11$, $33)(12$, $60)(14$, $21)(15$, $19)(16$, $39)(17$, $61)(18$, $44)(20$, $54)(24$, $50)(25$, $63)(26$, $47)(27$, $59)(28$, $64)(29$, $46)(34$, $35)(36$, $48)(40$, $52)(41$, $56)(42$, $53)(43$, $49)(51$, $58)(57$, $65)$, \\ $H=\langle a,b\rangle\cong\AGL_1(5)$ with $a= (1$, $41$, $15$, $8)(2$, $43$, $40$, $12)(3$, $16$, $4$, $38)(5$, $13$, $21$, $46)(6$, $58$, $25$, $11)(7$,
        $42$, $18$, $36)(9$, $62$, $39$, $45)(10$, $14$, $29$, $31)(17$, $65$, $50$, $35)(19$, $30$, $22$, $56)(20$, $28$, $27$, $26)(23$, $49$, $52$, $60)(24$, $34$, $61$, $57)(33$, $55$, $51$, $63)(37$, $53$, $44$, $48)(47$, $54$, $64$, $59)$ and
$b=(1$, $53$, $34$, $36$, $10)(2$, $13$, $49$, $65$, $64)(3$, $32$, $4$, $38$, $16)(5$, $17$, $12$, $23$, $54)(6$, $27$, $51$, $39$, $22)(7$, $44$, $14$, $61$, $41)(8$, $24$, $31$, $37$, $18)(9$, $33$, $20$, $25$, $19)(11$, $55$, $30$, $28$, $62)(15$, $29$, $42$, $57$, $48)(21$, $59$, $52$, $43$, $50)(26$, $56$, $63$, $58$, $45)(35$, $60$, $46$, $40$, $47)$,
and ${}^2B_2(8)\cong T=\langle H,x\rangle\leq \Sy_{65}$;

\item $n=2016$, $x=(1$, $20$, $18$, $21)(2$, $9)(3$, $19$, $15$, $11)(4$, $10)(5$, $17$, $16$, $6)(7$, $8$, $12$, $14)$, \\ $H=\langle a,b\rangle\cong\D_{10}$ with $a=(1$, $18)(3$, $15)(5$, $16)(6$, $17)(7$, $12)(8$, $14)(11$, $19)(20$, $21)$ and $b=(1$, $18$, $5$, $9$, $16)(2$, $15$, $7$, $12$, $3)(4$, $19$, $20$, $21$, $11)(6$, $17$, $14$, $13$, $8)$, and \\
 $\PSL_3(4)\cong T=\langle H,x\rangle\leq\Sy_{21} $;

\item $n=22176$, $x=(1, 8)(2, 21, 12, 17) (4, 15, 7, 5)(9, 22, 19, 14)(10, 13)(11, 20, 18, 16)$, \\ $H=\langle a,b\rangle\cong\AGL_{1}(5)$  with $a=(1$, $10)(2$, $4$, $12$, $7)(5$, $17$, $15$, $21)(8$, $13)(9$, $18$, $19$, $11)(14$, $16$, $22$, $20)$ and $b=(2$, $12$, $4$, $3$, $7)(5$, $14$, $13$, $22$, $15)(6$, $11$, $9$, $19$, $18)(8$, $17$, $20$, $16$, $21)$, and
$\M_{22}\cong T=\langle H,x\rangle\leq  \Sy_{22}$.
\end{enumerate}

\medskip

For each coset graph $\cG_n$,  with the help of  {\sc Magma}, we can verify that $x^2=1$, $\langle H,x\rangle=T$, $|H{:}(H\cap H^x)|=5$, and an element of order $5$ in $H$ induces a quasi-semiregular permutation on $[T{:}H]$. By  Proposition \ref{pro:cosetcons}, $\cG_n$  is a connected pentavalent $\hat{T}$-arc-transitive graph with a quasi-semiregular automorphism of order $5$. Furthermore, the automorphism group $\Aut(\cG_n)$ can also be determined by using {\sc Magma}. Then we have the following lemma.

\begin{lemma}\label{lm:examples}
For every $n  \in \{ 36,66,126,396,1456,2016,22176\}$, $\cG_n$ is a connected pentavalent symmetric graph admitting a quasi-semiregular automorphism, and for every $\cG_n$, $\Aut(\cG_n)$ and $\Aut(\cG_n)_v $ are listed in Table \ref{tb:graphs}, where  $v$ is a vertex of $ \cG_n$.
\end{lemma}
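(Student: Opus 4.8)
The plan is to treat all seven graphs uniformly through Proposition~\ref{pro:cosetcons}, and then to pin down the full automorphism group using the stabilizer list of Proposition~\ref{pro:val5stab}. For each $n\in\{36,66,126,396,1456,2016,22176\}$ I would first observe that the group $T$ appearing in Eq.~\ref{eq:eq7} is one of the simple groups $\A_6$, $\PSL_2(11)$, $\A_9$, $\M_{11}$, ${}^2B_2(8)$, $\PSL_3(4)$, $\M_{22}$; since $T$ is simple and $H<T$ is a proper subgroup, $H$ is automatically core-free, so $\Cos(T,H,HxH)$ is well defined. It then remains to check, directly on the given permutations, the three numerical hypotheses of Proposition~\ref{pro:cosetcons}: that $x^2=1$ (hence $x^2\in H$), that $\langle H,x\rangle=T$, and that $|H{:}(H\cap H^x)|=5$. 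With these in hand, Proposition~\ref{pro:cosetcons} immediately yields that $\cG_n$ is a connected pentavalent $\hat T$-arc-transitive graph with $|V(\cG_n)|=|T{:}H|=n$.

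Next I would verify quasi-semiregularity directly. Fix an element $g\in H$ of order $5$, which exists because $5\mid|H|$ in every case. Then $g$ fixes the coset $v=H$, and $\cG_n$ admits $g$ as a quasi-semiregular automorphism precisely when $g$ fixes no other coset and all its remaining cycles on $[T{:}H]$ have the same length. Since $g^5=1$, every nontrivial cycle already has length dividing $5$, so the only point to confirm is that $g$ has a unique fixed coset, equivalently that $zgz^{-1}\in H$ forces $z\in H$. This is a finite check on the cycle type of $g$ and is consistent with $n\equiv 1\ (\mod 5)$, which holds for each listed $n$. It produces the desired quasi-semiregular automorphism of order $5$ lying inside $\hat T\le\Aut(\cG_n)$.

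Finally, and this is where the real work lies, I would determine $A:=\Aut(\cG_n)$ and the stabilizer $A_v$. The key leverage is Lemma~\ref{lm:5qs}, which forces $|A|_5=5$, so that $A_v$ contains an element of order $5$; the structure of such a stabilizer is then tightly constrained by Proposition~\ref{pro:val5stab}, which limits $A_v$ to a short explicit list. Since $\cG_n$ is vertex-transitive we have $|A|=n\cdot|A_v|$, which bounds $|A|$ and already forces agreement with the orders recorded in Table~\ref{tb:graphs}. To identify $A$ and $A_v$ exactly I would compute them directly in \magma{} for the smaller graphs, and in the cases where $A$ strictly contains $\hat T$ (for instance $\cG_{66}$ with $A=\PGL_2(11)$, $\cG_{2016}$ with $A=\PSL_3(4).\mz_2^2$, and $\cG_{22176}$ with $A=\M_{22}.\mz_2$) I would exhibit the extra automorphisms as outer symmetries of the coset-graph data and then invoke the stabilizer constraint of Proposition~\ref{pro:val5stab} to rule out any further enlargement. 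I expect the main obstacle to be precisely this last step for the large graphs such as $\cG_{22176}$ on $22176$ vertices, where a direct automorphism-group computation is expensive; here the combination of $|A|_5=5$ with the admissible stabilizers from Proposition~\ref{pro:val5stab} is essential to reduce the question to a manageable verification matching Table~\ref{tb:graphs}.
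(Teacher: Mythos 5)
Your proposal is correct and matches the paper's proof essentially step for step: the paper likewise verifies by {\sc Magma} that the hypotheses of Proposition~\ref{pro:cosetcons} hold and that an order-$5$ element of $H$ acts quasi-semiregularly on $[T{:}H]$, and then determines $\Aut(\cG_n)$ and its vertex stabilizer directly by {\sc Magma} (the extra scaffolding you add via Lemma~\ref{lm:5qs} and Proposition~\ref{pro:val5stab} to control the larger graphs is a harmless refinement, not a different route). One shared slip worth fixing: for $n=396$, $2016$, $22176$ the listed $x$ has order $8$, $4$, $4$ respectively, so the condition to check is $x^2\in H$ --- which is what Proposition~\ref{pro:cosetcons} actually requires and what indeed holds --- rather than $x^2=1$.
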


In the following, we list all non-abelian simple groups such that the centralizer of an element of order $5$ is isomorphic to $\ZZ_5$, $\ZZ_{10}$, $\ZZ_{15}$, $\ZZ_{20}$, $\ZZ_5 \times\Sy_4$ or $\ZZ_5 \times \A_4$.

\begin{lemma}\label{lm:TCpair}
Let $T$ be a non-abelian simple group, and let $T$ contain an  element $x$ of order $5$ such that $\C_T(x) \in \{ \ZZ_5,\ZZ_{10},\ZZ_{15},\ZZ_{20},\ZZ_5 \times\Sy_4, \ZZ_5 \times \A_4\}$. Then $T$ and $\C_T(x) $ are listed in  Table~\ref{tb:cT}.
\end{lemma}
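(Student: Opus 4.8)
The plan is to prove Lemma~\ref{lm:TCpair} by a systematic case analysis over the classification of finite non-abelian simple groups, since the hypothesis places a very strong restriction on $T$: it must contain an element $x$ of order $5$ whose centralizer $\C_T(x)$ is one of the six listed small groups, each of order dividing $120$. The first and most useful observation is that in every case $5 \ddiv |\C_T(x)|$ but $25 \notdiv |\C_T(x)|$, and moreover $x \in \Z(\C_T(x))$, so $x$ lies in a Sylow $5$-subgroup $P$ of $T$ with $|P|$ dividing $5$ exactly (since $P \leq \C_T(x)$ forces $P$ cyclic of order $5$). Hence $T$ has Sylow $5$-subgroups of order exactly $5$. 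This single divisibility fact already eliminates most simple groups and bounds the relevant families.

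First I would treat the alternating groups $\A_m$: here $\C_{\A_m}(x)$ for a $5$-cycle $x$ is readily computed, and the requirement $|P|=5$ forces $5 \leq m \leq 9$, after which one checks each of $\A_5,\A_6,\A_7,\A_8,\A_9$ directly against the six target centralizers. Next, for the sporadic groups I would consult the \magma{} character-table library or \cite{Atlas} to read off, for each sporadic group with $5 \ddiv |T|$ and $25\notdiv|T|$, the centralizer order of a class of order-$5$ elements and compare with the list; this is a finite check producing the entries $\M_{11}$, $\M_{22}$, and the like. The bulk of the work is the groups of Lie type: I would split according to the defining characteristic. In characteristic $5$, an order-$5$ element is unipotent and $|P|=5$ forces the Lie rank to be very small (essentially $\PSL_2(5)$, $\PSL_3(5)$-type situations), which I would handle by the standard unipotent-centralizer formulas. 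In characteristic different from $5$, $x$ is semisimple and $\C_T(x)$ is (up to the center and graph-field contributions) a group of Lie type of smaller rank times a torus factor; the condition that this centralizer be as small as $\ZZ_5,\ZZ_{10},\ZZ_{15},\ZZ_{20},\ZZ_5\times\A_4$ or $\ZZ_5\times\Sy_4$ forces the rank and field size to be tiny, and I would enumerate the surviving possibilities ($\PSL_2(q)$, $\PSL_3(q)$, $\PSU_3(q)$, $\Sz(q)={}^2B_2(q)$, etc.) using the order formula $5\ddiv|T|$ together with the precise ppd/cyclotomic conditions governing when $5$ divides $|T|$ to exactly the first power.

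I expect the main obstacle to be the groups of Lie type in cross characteristic, because there the element of order $5$ can be regular or non-regular semisimple depending on how $5$ divides the relevant cyclotomic value $\Phi_d(q)$, and pinning down $\C_T(x)$ exactly (rather than just its order) requires care with the structure of semisimple centralizers and with the central/diagonal and field-automorphism contributions that distinguish, say, $\ZZ_{15}$ from $\ZZ_5\times\A_4$. I would organize this by using the fact that $5\ddiv|T|$ exactly pins down the order $d$ of $q$ modulo $5$ (so $d \in \{1,2,4\}$), and that $|\C_T(x)|\leq 120$ bounds the remaining torus and Levi factors; combining these two constraints leaves only finitely many pairs $(q, \text{type})$ to inspect individually. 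Finally, for each $T$ that survives I would record $\C_T(x)$ precisely to populate Table~\ref{tb:cT}, taking care that isoclinic or isogeny variants over the simple group are resolved correctly; throughout, \magma{} serves to confirm the small exceptional cases rather than to replace the generic Lie-type arguments.
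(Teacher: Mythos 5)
Your proposal follows essentially the same route as the paper's proof: first pin down $|T|_5=5$, then run through the CFSG (alternating groups, sporadic groups, groups of Lie type), handling the cross-characteristic Lie-type groups via the structure of semisimple-element centralizers constrained by $|\C_T(x)|\in\{5,10,15,20,60,120\}$ and by the order of $q$ modulo $5$. The only organizational difference is that the paper splits Lie type into exceptional versus classical (citing Suzuki's theorem for ${}^2B_2(q)$ and the Burness--Giudici inner-diagonal/semisimple-centralizer tables for the classical families), whereas you split by defining characteristic first; that difference is cosmetic.

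There is, however, one genuine flaw in the written argument, and it sits under the cornerstone of the whole proof. To obtain $|T|_5=5$ you assert that a Sylow $5$-subgroup $P$ of $T$ containing $x$ satisfies $P\leq\C_T(x)$. That is unjustified: $x\in P$ does not make $P$ centralize $x$, since a noncentral element of a nonabelian $5$-group is not centralized by the whole group. The conclusion is still true, but it needs an argument, e.g.\ the paper's: if $|T|_5=5^s$ with $s\geq 2$, take $P$ Sylow with $x\in P$; since $\Z(P)\neq 1$, either $x\in\Z(P)$, in which case $P\leq\C_T(x)$, or $x\notin\Z(P)$, in which case $\Z(P)\langle x\rangle\leq\C_T(x)$ has order divisible by $5^2$; either way $5^2$ divides $|\C_T(x)|$, contradicting the hypothesis on $\C_T(x)$. (Alternatively: $\langle x\rangle$ is a Sylow $5$-subgroup of $\C_T(x)$, and since $\Aut(\ZZ_5)$ has order $4$, any $5$-group normalizing $\langle x\rangle$ centralizes it, so the normalizer-growth property of $p$-groups forces $\langle x\rangle$ to be Sylow in $T$.) With this step repaired, your outline is, in substance, the paper's proof.
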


 \begin{table}[htb]
\caption{Simple groups $T$ with $\C_T(x)\in \{ \ZZ_5,\ZZ_{10},\ZZ_{15},\ZZ_{20},\ZZ_5 \times \A_4, \ZZ_5 \times\Sy_4 \}$.  }\label{tb:cT}
\[
 \begin{array}{  l  l | ll | ll  } \hline
T & \C_T(x) & T & \C_T(x) & T & \C_T(x)  \\ \hline
 \A_5  &\ZZ_5& \M_{23} &\ZZ_{15} &\PSL_2(31) &\ZZ_{15}   \\
 \A_6  &\ZZ_5 & \M_{24} &\ZZ_{5}  \times \A_4 &\PSL_2(41) &\ZZ_{20}   \\
  \A_7 &\ZZ_5 &  {}^2B_2(8) &\ZZ_{5} & \PSL_4(3)& \ZZ_{20}   \\
  \A_8 &\ZZ_{15} & \PSL_2(19) &\ZZ_{10}   &     \PSL_5(2)& \ZZ_{15}   \\
  \A_9 &\ZZ_{5} \times \A_4 &  \PSL_3(4) &\ZZ_{5} & \PSU_4(3) &\ZZ_{5}   \\
  \M_{11} &\ZZ_5 &  \PSL_2(29) &\ZZ_{15}  &  \PSU_4(2) &\ZZ_{5}  \\
  \M_{12} &\ZZ_{10} &  \PSL_2(11) &\ZZ_{5}  & \PSU_5(2) &\ZZ_{15}   \\
  \M_{22} &\ZZ_5  &  \PSL_2(16) &\ZZ_{15}  &  \POmega_7(3) &\ZZ_{5} \times\Sy_4   \\
 \hline
 \end{array}
\]
\end{table}

Lemma~\ref{lm:TCpair} is proved by checking all non-abelian simple groups case by case, which is a little long. For not breaking the thread, we leave the proof of Lemma~\ref{lm:TCpair} in Section~\ref{sec:proofTC}. Now we are ready to finish the proof of Theorem~\ref{th:val5}.

\begin{proof}[Proof of Theorem\ref{th:val5}~{\rm(2)}]  Let $\Ga$ be a connected pentavalent $T$-arc-transitive graph with  a quasi-semiregular automorphism $x\in T$, where $T$ is a nonabelian simple group. Then $x$ is of order $5$, and has a unique fixed vertex, say $v\in V(\Ga)$. Clearly, $|T|/|T_v | = |V (\Ga)|  \equiv 1 \ (\mod 5)$. By Propositions~\ref{pro:qsprop}~(2) and  \ref{pro:val5stab}, $\C_T (x) \leq \C_{T_v} (x)\in \{\ZZ_5,\ZZ_{10}, \ZZ_{15}, \ZZ_{20}, \ZZ_5 \times \A_4, \ZZ_5 \times \Sy_4 \}$.  By Lemma~\ref{lm:TCpair}, the pairs $(T,\C_T(x))$ are listed in Table \ref{tb:cT}, and again by Proposition~\ref{pro:val5stab}, one may obtain all possibilities for $T_v$. Then by using {\sc Magma}~\cite{Magma} together with Proposition~\ref{pro:cosetcons}, we have $\Ga\cong K_6$ or $\Ga\cong\cG_n$ for $n=36,66,126,396,1456,2016$ or $22176$, and the detailed information are given below. Let $w$ be adjacent to $v$ in $\Ga$. Now we consider the pairs $(T,\C_T(x))$ listed in Table~\ref{tb:cT}.

Assume $(T,\C_T(x))=(\A_5,\ZZ_5)$. By Proposition \ref{pro:val5stab}, we have \[T_v \in \{ \ZZ_5,\D_{10},\AGL_1(5),\A_5, \Sy_5, \ASL_2(4), \ASigmaL_2(4)\},\]
where $|\ASL_2(4)|=2^6{\cdot}3{\cdot}5$ and $|\ASigmaL_2(4)|=2^7{\cdot}3{\cdot} 5$. Since $|T|/|T_v| = |V (\Ga)|  \equiv 1 \ (\mod 5)$, the only possibility for $ T_v $ is $ \D_{10}$. Then $|V(\Ga)|=6$ and $\Ga\cong K_6$.

Assume $(T,\C_T(x))=(\A_6,\ZZ_5)$. Similarly, since $|\A_6|=2^3{\cdot}3^2{\cdot}5$, the possibilities for  $ T_v $ are $ \D_{10}$  and $ \A_{5}$. Clearly, $\Ga\cong K_6$ if $T_v=\A_5$. Let $T_v=\D_{10}$. By {\sc Magma}~\cite{Magma}, under conjugacy, we may let $T_v=\langle (1,5)(3,4),(1,5,4,6,3)\rangle$, and $T_{vw}=\langle (1,5)(3,4) \rangle\cong \ZZ_2$. Then $\N_T(T_{vw})=\langle (1,5)(3,4),(1,3)(4,5),(2,6)(3,4)\rangle$ has order $8$, and a right transversal of $T_{vw} $ in $\N_T(T_{vw})$ is $\{1,x_1:=(1,3)(4,5),x_2:=(2,6)(3,4),x_3:=(1, 3, 5, 4)(2, 6) \}$. By Proposition~\ref{pro:cosetcons}, $\Ga \cong \Cos(T,T_v,T_vx_iT_v)$ for some $1\leq i \leq 3$. Moreover,  computation with {\sc Magma}~\cite{Magma} shows that $\Cos(T,T_v,T_vx_3T_v)\cong \Cos(T,T_v,T_vx_2T_v)\cong\Cos(T,T_v,T_vx_1T_v)$, so that $\Ga\cong\Cos(T,T_v,T_vx_1T_v)=\cG_{36}$.

Assume $(T,\C_T(x))=(\A_7,\ZZ_5)$. Then the possibilities for  $ T_v $ are $ \ASL(2,4)$  and $ \Sy_{5} $. If $T_v=\ASL(2,4)$, then under conjugacy, by {\sc Magma} we may let $T_v=\langle (1, 2, 6, 7)(3, 5)$, $(1, 4, 6, 2, 7)\rangle$ and $T_{vw}=\langle (1, 2, 6, 7)(3, 5)\rangle$, and $\N_T(T_{vw})=\langle T_{vw}, (2, 7)(3, 5)\rangle$. By Proposition~\ref{pro:cosetcons}, $\Ga \cong \Cos(T,T_v,T_v x T_v)$ with $x=(2, 7)(3, 5)$, which is impossible because $\langle T_v, x\rangle<T$ (contradicting the connectedness of $\Ga$). If $T_v=\Sy_5$, we may let $T_v=\langle (1,2,3,4,5)$, $(1,2)(3,4)$, $(1,2)(6,7) \rangle $ and $T_{vw}=\langle (1,2,3)$, $(1,2)(3,4)$, $(1,2)(6,7)\rangle$. Then $\N_T(T_{vw})= T_{vw}$, and by Proposition~\ref{pro:cosetcons}, $\Ga \cong \Cos(T,T_v,T_v 1 T_v)$ and $T=\langle T_v,1\rangle=T_v$, contradicting the connectedness of $\Ga$.

Assume $(T,\C_T(x))=(\M_{11}, \ZZ_5)$. Since $|\M_{11}|=2^4{\cdot}3^2{\cdot}5{\cdot}11$, the possibilities for  $T_v $ are $\ASL(2,4)$ and $\Sy_{5}$. If $T_v=\Sy_5$, then by {\sc Magma} we obtain $|\N_T(T_{vw}):T_{vw}|=1$, which is impossible. If $T_v=\Sy_5$, a similar computation to the case $(\A_6,\ZZ_5)$ gives rise to $\Ga=\cG_{396}$.

Assume $(T,\C_T(x))=(\M_{22}, \ZZ_{5})$. Since $|\M_{22}|=2^7{\cdot}3^2{\cdot}5{\cdot}7{\cdot}11$,  the possibilities for  $ T_v $ are $\AGL_1(5)$, $\Sy_5$ and $\ASigmaL_2(4)$. If $T_v=\Sy_5$ or $\ASigmaL_2(4)$, then $|\N_T(T_{vw}):T_{vw}|=1$, contradicting the connectedness of $\Ga$. Thus, $T_v=\AGL_1(5)$, and by using {\sc Magma}, we have $\Ga\cong\cG_{22176}$.

Assume $(T,\C_T(x))=({}^2B_2(8), \ZZ_{5})$. Since $|{}^2B_2(8)|=2^6 {\cdot} 5  {\cdot}7  {\cdot}13$, the only possibility for $ T_v $ is $\AGL_1(5) $, and then a computation with {\sc Magma} shows that $\Ga\cong \cG_{1456}$.

Assume $(T,\C_T(x))=(\PSL_3(4), \ZZ_{5})$. Since $|\PSL_3(4)|=2^6 {\cdot}3^2 {\cdot}5 {\cdot}7$, the  possibilities for  $ T_v $ are $\D_{10}$, $\A_5$ and $\ASL_2(4)$. If $T_v=\A_{5}$, by {\sc Magma}, there are seven $\A_5$ in $T$ up to conjugacy, of which one is such that $\langle T_v,\N_T(T_{vw})\rangle=\A_5$ and the others are such that $\langle T_v,\N_T(T_{vw})\rangle=\A_6$, contradicting the connectedness of $\Ga$. If $T_v=\ASL_2(4)$, then $|\N_T(T_{vw}):T_{vw}|=1$, which is also impossible. If $T_v=\D_{10}$, then by using {\sc Magma} we have $\Ga\cong \cG_{2016}$.

Assume $(T,\C_T(x))=(\PSL_2(11), \ZZ_{5})$. Since $|\PSL_2(11)|=2^2 {\cdot}3 {\cdot}5 {\cdot} 11$, the  possibilities for  $ T_v $ are $\D_{10}$ and $\A_5$. If $T_v=\A_{5}$, then $|\N_T(T_{vw}):T_{vw}|=1$, which is impossible. If $T_v=\D_{10}$, a computation with {\sc Magma} shows that $\Ga=\cG_{66}$.

Assume $(T,\C_T(x))=(\PSU_4(2), \ZZ_{5})$.  Since $|\PSU_4(2)|=2^6 {\cdot}3^4 {\cdot}5$, the  possibilities for  $ T_v $ are $\AGL_1(5)$ and $\Sy_5$. For the former, $\langle T_v,\N_T(T_{vw})\rangle=\Sy_6$, and for the latter $\langle T_v,\N_T(T_{vw})\rangle=\Sy_6$ by using {\sc Magma}, contradicting the connectedness of $\Ga$.

Assume $(T,\C_T(x))=(\PSU_4(3), \ZZ_{5})$. Since $|\PSU_4(2)|=2^7{\cdot}3^6{\cdot}5{\cdot}7$, the  possibilities for  $ T_v $ are $\AGL_1(5)$, $\Sy_5$ and $\ASigmaL_2(4)$. If $T_v=\AGL_1(5)$, by {\sc Magma} there is no $x \in \N_T(T_{vw}) $ such that  $x^2 \in T_v$, $\langle T_v,x\rangle=T$ and $|T_v:(T_v \cap T_v^x)|=5$, contradicting Proposition~\ref{pro:cosetcons}. If $T_v=\Sy_{5}$, by {\sc Magma} we obtain  $\langle T_v,\N_T(T_{vw})\rangle=\Sy_6$, contradicting the connectedness of $\Ga$. If $T_v=\ASigmaL_2(4)$, a computation with {\sc Magma} shows that $\PSU_4(3)$ has no such subgroup isomorphic to $\ASigmaL_2(4)$, a contradiction.

Assume $(T,\C_T(x))=(\M_{12}, \ZZ_{10})$.  By Proposition~\ref{pro:val5stab}, we have
\[ T_v \in\{\ZZ_2\times \D_{10}, \ZZ_2\times\AGL(1,5)\}.\]
Since $|\M_{12}|=2^6{\cdot}3^3{\cdot}5{\cdot}11$, the only possibility for $ T_v $ is $\ZZ_2 \times\AGL_1(5)$. By using {\sc Magma}, we have $|\N_T(T_{vw}):T_{vw}|=4$, and there is no $x\in \N_T(T_{vw})$ such that $x^2 \in T_v$, $\langle T_v,x\rangle=T$ and $|T_v:(T_v \cap T_v^x)|=5$, contradicting Proposition~\ref{pro:cosetcons}.

Assume $(T,\C_T(x))=(\PSL_2(19), \ZZ_{10})$. Since $|\PSL_2(19)|=2^2{\cdot}3{\cdot}5{\cdot}11$, we have $ T_v =\ZZ_2 \times \D_{10}$, and by using {\sc Magma}, we obtain $|\N_T(T_{vw}):T_{vw}|=3$ and there is no $x \in \N_T(T_{vw}) $ such that  $x^2 \in T_v$, $\langle T_v,x\rangle=T$ and $|T_v:(T_v \cap T_v^x)|=5$, contradicting Proposition~\ref{pro:cosetcons}.

Assume $(T,\C_T(x))=(\A_8,\ZZ_{15})$. By Proposition~\ref{pro:val5stab}, we have \[T_v\in \{ \AGL_2(4),\AGammaL_2(4)\},\]
 where  $|\AGL_2(4)|=2^6{\cdot}3^2{\cdot}5$  and  $|\AGammaL_2(4)|=2^7{\cdot}3^2{\cdot}5 $.
Noting that $|\A_8|=2^6 {\cdot} 3^2 {\cdot} 5 {\cdot} 7$, we have either $|V(\Ga)|= |T:T_v|\not\equiv 1 (\mod 5)$ or $|T_v|$ is not a divisor of $|T|$, a contradiction.

Assume $(T,\C_T(x))=(\M_{23}, \ZZ_{15})$. Since $|\M_{23}|=2^7{\cdot}3^2 {\cdot}5{\cdot}7 {\cdot}11{\cdot}23$, the only possibility for  $ T_v $ is $\AGammaL_2(4) $, and by using {\sc Magma}, we obtain $|\N_T(T_{vw}):T_{vw}|=1$, a contradiction.

Assume $(T,\C_T(x))=(\PSL_2(29), \ZZ_{15})$, $(\PSL_2(16), \ZZ_{15})$ or $(\PSL_2(31), \ZZ_{15})$. Note that $|\PSL_2(29)|=2^2 {\cdot} 3 {\cdot}  5 {\cdot}  7 {\cdot} 29$, $|\PSL_2(16)|=2^4{\cdot}  3{\cdot}  5{\cdot}  17$ and $|\PSL_2(31)|=2^5{\cdot} 3{\cdot} 5{\cdot} 31$. Then $|T_v|$ is not a divisor of $|T|$, a contradiction.

Assume $(T,\C_T(x))=(\PSU_5(2), \ZZ_{15})$. Since $|\PSU_5(2)|=2^{10}{\cdot}3^5{\cdot}5{\cdot}11$, the only possibility for  $ T_v $ is $\AGammaL_2(4)$, and by using {\sc Magma}, $\PSU_5(2)$ has no subgroup isomorphic to $\AGammaL_2(4)$, a contradiction.

Assume $(T,\C_T(x))=(\PSL_5(2), \ZZ_{15})$. Since $|\PSL_5(2)|=2^{10}{\cdot}3^2{\cdot}5{\cdot}7{\cdot} 31$, the only possibility for  $ T_v $ is $\AGammaL_2(4)$, and by {\sc Magma},  $|\N_T(T_{vw}):T_{vw}|=1$, a contradiction.

Assume $(T,\C_T(x))=(\PSL_2(41), \ZZ_{20})$. By Proposition~\ref{pro:val5stab}, $T_v=\ZZ_4 \times \AGL_1(5)$, which is impossible because $|T|=2^3{\cdot}3{\cdot}5{\cdot}5{\cdot}41$ and $|T_v|=2^4{\cdot}5$.

Assume $(T,\C_T(x))=(\PSL_4(3), \ZZ_{20})$. By Proposition~\ref{pro:val5stab}, $T_v=\ZZ_4 \times \AGL_1(5)$, which is impossible because $\PSL_4(3)$ has no subgroup isomorphic to $ \ZZ_4 \times \AGL_1(5)$ by {\sc Magma}.

Assume $(T,\C_T(x))=(\A_9, \ZZ_5\times \A_{4})$. By Proposition~\ref{pro:val5stab}, we have
\[ T_v\in \{ \A_4 \times \A_5,(\A_4 \times \A_5)\rtimes\ZZ_2,2^6\rtimes\GammaL_2(4)\}, \]
  where $|\A_4 \times \A_5|=2^4{\cdot}3^2{\cdot}5$,   $|,(\A_4 \times \A_5)\rtimes\ZZ_2|=2^5{\cdot}3^2{\cdot}5$ and  $|2^6\rtimes\GammaL_2(4)|=2^9{\cdot}3^2{\cdot}5$.
Since $|\A_9|=2^6{\cdot}3^4{\cdot}5{\cdot}7$, the only possibility for  $ T_v $ is $(\A_4 \times \A_5)\rtimes \ZZ_2$. By {\sc Magma}, under conjugacy, we may let $T_v= \langle (1,2)(3,4), (1,2,3), (5,6,7), (7,8,9), (1,2)(5,6) \rangle$ and $T_{vw}=\langle (1,2)(3,4)$, $(1,2,3)$, $(5,6,7)$, $(5,6)(7,8)$, $(1,2)(5,6)\rangle$. Then $\N_T(T_{vw})=T_{vw}\cup T_{vw}x$ with $x=(1$, $5)(2$, $6)(3$, $7)(4$, $8)\rangle$. Therefore, $\Ga\cong \Cos(T,T_v,T_v xT_v)=\cG_{126}$.

Assume $(T,\C_T(x))=(\M_{24}, \ZZ_{5}\times \A_4)$. Since $|\M_{24}|=2^{10}{\cdot}3^3{\cdot}5{\cdot}7{\cdot}11{\cdot}23$, the possibilities for $ T_v $ are $(\A_4 \times \A_5)\rtimes \ZZ_2$ or $2^6 \rtimes  \GammaL_2 (4)$. If $T_v=(\A_4 \times \A_5)\rtimes \ZZ_2$ then $|\N_T(T_{vw}):T_{vw}|=1$, and if $T_v=2^6 \rtimes  \GammaL_2 (4)$ then $|\N_T(T_{vw}):T_{vw}|=2$ and $\langle \N_T(T_{vw}),T_v\rangle<T $,
of which both are impossible.

Assume $(T,\C_T(x))=(\POmega_7(3), \ZZ_{5} \times \Sy_4)$. By Proposition~\ref{pro:val5stab}, $ T_v\cong \Sy_4 \times \Sy_5$. By using {\sc Magma}, $T$ has two conjugacy classes of subgroups isomorphic to $\Sy_4 \times \Sy_5$. For one class, $\langle T_v,\N_T(T_{vw})\rangle<T$, and for the other class, there is no $x \in \N_T(T_{vw}) $ such that  $x^2 \in T_v$, $\langle T_v,x\rangle=T$ and $|T_v:(T_v \cap T_v^x)|=5$, of which both are impossible.
\end{proof}

\section{Proof of Lemma \ref{lm:TCpair}}\label{sec:proofTC}

In this section, we prove Lemma \ref{lm:TCpair}. Our proof is based on the classification of finite nonabelian simple groups. It is well known that non-abelian simple groups consist of the alternating  groups $\A_n$ for $n\geq 5$, the $26$  simple sporadic groups, and simple groups of Lie type (see \cite[Definition 2.2.8]{CFSG}). For a list of nonabelian simple
groups and the isomorphisms among them, the reader may refer to Atlas \cite[Chapters 1-3]{Atlas}
 or Kleidman and Liebeck's book~\cite[Tables 5.1.A-5.1.C and Theorem 5.1.2]{K-Lie}. For a group $G$ and $g\in G$, we denote by $o(g)$ the order of $g$ in $G$.
Let $T$ be a non-abelian simple group and we always make the next hypothesis throughout this section.

\vspace{0.2cm}
\noindent{\bf Hypothesis}
$x\in T$, $o(x)=5$, and $\C_T(x) \in \{ \ZZ_5,\ZZ_{10},\ZZ_{15},\ZZ_{20},\ZZ_5 \times\Sy_4, \ZZ_5 \times \A_4\}$.

From Hypothesis,
$|\C_T(x)| \in \{5,10,15,20,60,120 \}$. Suppose that $|T|_5=5^s$ for some $s\geq 2$. Let $P$ be  a Sylow $5$-subgroup of $T$ containing $x$. Then $|P|=5^s$, and by \cite[Chapter 1: Theorem 2.11]{DG}, $\Z(P)\not=1$.
If $x\in \Z(P)$ then $P\leq \C_T(x)$ and if $x\not\in \Z(P)$ then $\Z(P)\langle x\rangle \leq \C_T(x)$, implying $5^2\mid |\C_T(x)|$, a contradiction. It follows that \[|T|_5=5.\] This fact plays an important role in this section.

We process by considering the non-abelian simple groups $T$ case by case, of which the first is the alternating groups and simple sporadic  groups.

\begin{lemma}\label{lm:ansporadic}
 Let $T$ be an alternating group $\A_n$ with $n\geq 5$ or a  simple sporadic group. Under Hypothesis, we have $(T,\C_T(x))=(\A_5,\ZZ_5),(\A_6,\ZZ_5),(\A_7,\ZZ_5)$, $(\A_8,\mz_{15})$, $(\A_9,\ZZ_5\times\A_4)$, $(\M_{11},\ZZ_5)$, $(\M_{12},\ZZ_{10})$, $(\M_{22},\ZZ_5)$, $(\M_{23},\ZZ_{15})$ or $(\M_{24},\ZZ_5\times\A_4)$.
\end{lemma}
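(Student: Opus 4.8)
The plan is to reduce Lemma~\ref{lm:ansporadic} to a finite verification by exploiting the structural constraint established at the start of this section, namely that Hypothesis forces $|T|_5 = 5$. Every group under consideration must therefore have Sylow $5$-subgroups of order exactly $5$, which eliminates the overwhelming majority of alternating and sporadic groups outright. First I would split into the alternating case and the sporadic case and treat them by different techniques.

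For the alternating groups $T = \A_n$, the constraint $|\A_n|_5 = 5$ means $n \in \{5,6,7,8,9\}$, since $n \geq 10$ already contributes a second factor of $5$ from $\{5,6,7,8,9,10\}$. So only five groups survive, and for each I would directly compute $\C_{\A_n}(x)$ for an element $x$ of order $5$ (a single $5$-cycle, up to conjugacy there is essentially one class of order-$5$ elements in each of these small groups). The centralizer of a $5$-cycle $(1\,2\,3\,4\,5)$ in $\Sy_n$ is $\langle(1\,2\,3\,4\,5)\rangle \times \Sym(\{6,\dots,n\})$, and intersecting with $\A_n$ gives the answer: $\ZZ_5$ for $n=5,6,7$, then $\ZZ_5 \times \A_3 \cong \ZZ_{15}$ for $n=8$, and $\ZZ_5 \times \A_4$ for $n=9$. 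One checks each of these centralizers does indeed lie in the allowed list $\{\ZZ_5,\ZZ_{10},\ZZ_{15},\ZZ_{20},\ZZ_5\times\A_4,\ZZ_5\times\Sy_4\}$, so all five cases are retained, matching the stated conclusion.

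For the sporadic groups, I would go through the list of $26$ sporadic simple groups and discard any whose order is divisible by $5^2$; this is a quick arithmetic check against the known group orders. Among those with $|T|_5 = 5$, I would then read off the centralizer structure of an order-$5$ element from the \magma{} character-table/centralizer data or the Atlas~\cite{Atlas}, which records $\C_T(x)$ (or at least $|\C_T(x)|$ together with its isomorphism type) for each conjugacy class. The groups that pass both the $|T|_5 = 5$ test and have $\C_T(x)$ in the allowed list are exactly $\M_{11}, \M_{12}, \M_{22}, \M_{23}, \M_{24}$ with the centralizers recorded in Table~\ref{tb:cT}; all remaining sporadics fail either because $25 \mid |T|$ or because the order-$5$ centralizer is too large or has the wrong isomorphism type.

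The main obstacle is not conceptual but bookkeeping: the sporadic case requires reliably identifying the \emph{isomorphism type} of $\C_T(x)$, not merely its order, since for instance $\ZZ_{20}$ and $\ZZ_5 \times \A_4$ would need to be distinguished from other groups of the same order, and $\ZZ_{10}$ versus $\ZZ_5\times\ZZ_2$ (which coincide) versus non-cyclic possibilities must be pinned down. I would handle this by appealing to \magma{}~\cite{Magma} to compute the centralizer directly in a concrete permutation or matrix representation of each surviving sporadic group, which removes any ambiguity and simultaneously confirms that the short list is complete. The alternating case, by contrast, is entirely elementary and can be written out by hand.
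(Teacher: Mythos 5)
Your proposal is correct and follows essentially the same route as the paper: both exploit $|T|_5=5$ to cut the alternating case down to $n\in\{5,\dots,9\}$ and compute $\C_{\A_n}(x)=(\ZZ_5\times\Sy_m)\cap\A_n$ for a $5$-cycle, and both handle the sporadic case by first discarding groups with $5^2\mid |T|$ (the paper cites the order tables in \cite{K-Lie}, leaving the nine candidates $\M_{11},\M_{12},\M_{22},\M_{23},\M_{24},J_1,J_3,J_4,O'N$), then using Atlas centralizer orders to eliminate $J_1,J_3,J_4,O'N$ and {\sc Magma} to pin down the isomorphism types for the Mathieu groups. The only difference is bookkeeping detail: the paper explicitly records the centralizer orders $30,30,6720,180$ that rule out the non-Mathieu survivors, which your write-up would need to include.
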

\begin{proof}
Assume that $T=\A_n (n\geq 5)$ is an alternating  group. Since $|T|_5=5$, we have $n\leq 9$ and then $x$ have type $5^1 1^m$ in its distinct cycle decomposition with $n=5+m$, that is, $x$ is a product of one 5-cycle and $m$ $1$-cycles (fixed points). It follows that $\C_{T}(x)=(\ZZ_5 \times \Sy_m) \cap \A_{n}$, and hence $(T,\C_T(x))=(\A_5,\ZZ_5),(\A_6,\ZZ_5),(\A_7,\ZZ_5)$, $(\A_8,\ZZ_{15})$ or $(\A_9,\ZZ_5\times\A_4)$.

Assume that $T $ is one of the $26$  simple sporadic groups. By checking the order of $T$ in \cite[Table 5.1.C]{K-Lie}, we have $T\in \{ \M_{11},\M_{12},\M_{22},\M_{23},\M_{24},J_{1},J_{3},J_{4},O'N\}$ as $|T|_5=5$.
Suppose that $T=J_{1}$, $J_{3}$, $J_{4}$ or $O'N$. By Atlas \cite{Atlas}, $|\C_T(x)|=30,30,6720,180$, respectively, which contradict Hypothesis. Thus, $T\in \{ \M_{11},\M_{12},\M_{22},\M_{23},\M_{24}\}$.
Let $T=\M_{24}$. By Atlas~\cite[pp.94]{Atlas}, $T$ has only one conjugacy class of elements of order $5$, denoted by 5A, and their centralizers have order $60$. By a calculation with {\sc Magma}~\cite{Magma}, we have $\C_T(x)=\mz_5\times\A_4$. Let $T=\M_{11}$, $\M_{12}$, $\M_{22}$ or $\M_{23}$. Then a similar argument to $\M_{24}$ gives rise to $\C_T(x)=\ZZ_5$, $\ZZ_{10}$, $\ZZ_5$ or $\ZZ_{15}$, respectively. It follows that $(T,\C_T(x))= (\M_{11},\ZZ_5)$, $(\M_{12},\ZZ_{10})$, $(\M_{22},\ZZ_5)$, $(\M_{23},\ZZ_{15})$ or $(\M_{24},\ZZ_5\times\A_4)$.
\end{proof}

The following is a simple observation, which is very useful in the proofs of the following two lemmas, and its proof is straightforward.

\vspace{0.2cm}
\noindent{\bf Observation}
 Let $q$ be a power of a prime $p$ with $p\not=5$. Then the following hold:
 \begin{enumerate}[\rm (1)]
 \item  For every positive integer $s$, $q^2\equiv \pm 1 (\mod 5)$ and $q^{4s}\equiv 1 (\mod 5)$;
 \item Either $5\nmid (q^2-1)(q^6-1)$ or $5^2\mid (q^2-1)(q^6-1)$;
 \item If $q\equiv 2,3 (\mod 5)$ then $5\nmid  (q^3+1)(q-1) $ and $5^2\mid  (q^6+1)(q^4-1) $.
\end{enumerate}
%\medskip
%\noindent{\bf Observation:}  Let $q$ be a power of a prime $p$ with $p\not=5$. Then the following hold:
% \begin{enumerate}[\rm (1)]
% \item  For every positive integer $s$, $q^2\equiv \pm 1 (\mod 5)$ and $q^{4s}\equiv 1 (\mod 5)$;
% \item Either $5\nmid (q^2-1)(q^6-1)$ or $5^2\mid (q^2-1)(q^6-1)$;
% \item If $q\equiv 2,3 (\mod 5)$ then $5\nmid  (q^3+1)(q-1) $ and $5^2\mid  (q^6+1)(q^4-1) $.
%\end{enumerate}

%\medskip
Now we consider the simple exceptional  groups of Lie type.

 \begin{lemma}\label{lm:excptional}
Let $T$ be a simple exceptional group of Lie type in characteristic $p$. Under Hypothesis, we have $(T,\C_T(x))=(^2B_2(8),\ZZ_5)$.
 \end{lemma}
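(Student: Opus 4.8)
The plan is to run through the finite simple exceptional groups of Lie type family by family, using as the principal weapon the fact established above that, under the Hypothesis, $|T|_5=5$; thus $T$ has an element of order $5$ but its Sylow $5$-subgroups are cyclic of order exactly $5$. I would first settle the characteristic: if $p=5$ then $q=5^k$, and the unipotent factor $q^N$ in $|T|$ (with $N\geq 2$ for every exceptional type) already forces $5^2\div|T|$, a contradiction; hence $p\neq 5$ throughout and the Observation applies. The families to treat are ${}^2G_2(q)$, $G_2(q)$, ${}^3D_4(q)$, $F_4(q)$, ${}^2F_4(q)$ (with the Tits group ${}^2F_4(2)'$), $E_6(q)$, ${}^2E_6(q)$, $E_7(q)$, $E_8(q)$ and ${}^2B_2(q)$; along the way I note that $G_2(2)'\cong\PSU_3(3)$, ${}^2G_2(3)'\cong\PSL_2(8)$ and the non-simple ${}^2B_2(2)$, ${}^2G_2(3)$, $G_2(2)$ are handled elsewhere or excluded.

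For every family except the Suzuki groups I expect the order to decide the matter, by computing $|T|_5$ from the standard order formula and invoking the Observation. For ${}^2G_2(q)=\mathrm{Ree}(q)$ one has $q=3^{2n+1}\equiv 2,3\ (\mod 5)$, so Observation~(3) gives $5\nmid(q^3+1)(q-1)$ and hence $5\nmid|{}^2G_2(q)|$, ruling out an element of order $5$ entirely. For $G_2(q)$ and ${}^3D_4(q)$ the $5$-part of the order reduces to that of $(q^6-1)(q^2-1)$ (for ${}^3D_4$ one first checks $q^8+q^4+1\equiv 3\ (\mod 5)$ via Observation~(1)), and Observation~(2) shows this is either coprime to $5$, so there is no $5$-element, or divisible by $5^2$, so $|T|_5\geq 25$ contradicts $|T|_5=5$. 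For $F_4(q)$, $E_6(q)$, ${}^2E_6(q)$, $E_7(q)$, $E_8(q)$ I split into the cases $q^2\equiv 1$ and $q^2\equiv -1\ (\mod 5)$ of Observation~(1): in each case at least two of the cyclotomic factors $q^m\mp 1$ occurring in $|T|$ are simultaneously divisible by $5$, whence $5^2\div|T|$, again contradicting $|T|_5=5$. Finally, for ${}^2F_4(q)$ we again have $q\equiv 2,3\ (\mod 5)$, so Observation~(3) gives $5^2\div(q^6+1)(q^4-1)$ directly, while $|{}^2F_4(2)'|_5=25$.

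The one surviving family, and the real content of the lemma, is ${}^2B_2(q)$ with $q=2^{2n+1}$: here the order $q^2(q^2+1)(q-1)$ has $5$-part exactly $5$ for infinitely many $q$ (precisely when $25\nmid q^2+1$), so a crude order estimate no longer suffices and I must analyse the centralizer. I would invoke the subgroup structure of the Suzuki groups: the maximal tori are cyclic of the pairwise coprime orders $q-1$, $q-\sqrt{2q}+1$, $q+\sqrt{2q}+1$ (with $\sqrt{2q}=2^{n+1}$), and each torus is the kernel of a Frobenius normalizer, so the complement acts fixed-point-freely on its nonidentity elements; consequently the centralizer of any nonidentity element of such a torus is the torus itself. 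Since $x$ is semisimple and $5\div q^2+1=(q-\sqrt{2q}+1)(q+\sqrt{2q}+1)$, the element $x$ lies in one of the last two tori and $\C_T(x)=\ZZ_{q\pm\sqrt{2q}+1}$. By the Hypothesis this cyclic group is one of $\ZZ_5,\ZZ_{10},\ZZ_{15},\ZZ_{20}$; but $q\pm\sqrt{2q}+1$ is odd, ruling out $\ZZ_{10}$ and $\ZZ_{20}$, and coprime to $3$ since $3\nmid|{}^2B_2(q)|$, ruling out $\ZZ_{15}$. Hence $\C_T(x)=\ZZ_5$ and $q\pm\sqrt{2q}+1=5$. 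Writing $u=\sqrt{2q}$ and $q=u^2/2$, this becomes $u^2\mp 2u-8=0$, whose only root giving a simple Suzuki parameter $q\geq 8$ is $u=4$, that is $q=8$. Therefore $(T,\C_T(x))=({}^2B_2(8),\ZZ_5)$.

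The main obstacle I anticipate is precisely the Suzuki case: unlike the other families it cannot be eliminated by a $5$-part estimate, and the argument rests on the fact that the centralizer of a semisimple $5$-element equals the cyclic torus containing it, equivalently that $N_T(T_i)$ acts fixed-point-freely on $T_i\setminus\{1\}$. I would cite the classical description of the tori and their normalizers in ${}^2B_2(q)$ for this, and then close by confirming with {\sc Magma} that in ${}^2B_2(8)$ an element of order $5$ indeed has centralizer $\ZZ_5$, consistent with the single torus $\ZZ_{q-\sqrt{2q}+1}=\ZZ_5$.
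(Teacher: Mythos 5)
Your proposal is correct and follows essentially the same route as the paper: every family other than ${}^2B_2(q)$ is killed by the constraint $|T|_5=5$ via the three parts of the Observation (including the characteristic-$5$ case), and the Suzuki case is settled by the classical torus structure of ${}^2B_2(q)$ — the paper cites Suzuki's theorem that $\C_T(x)$ is abelian of order $q\pm\sqrt{2q}+1$ and you rederive the same fact from the Frobenius normalizers, both forcing $q=8$ and $\C_T(x)=\ZZ_5$. One small point in your favour: your reason for excluding the Tits group, namely $|{}^2F_4(2)'|_5=5^2$, is the correct one, whereas the paper asserts $5\nmid|{}^2F_4(2)'|$, which is false since $|{}^2F_4(2)'|=2^{11}\cdot 3^3\cdot 5^2\cdot 13$ (the exclusion itself of course still stands).
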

 \begin{proof} Let $q$ be a $p$-power. Then $T \in\{ G_2(q)$, $F_4(q)$,  ${}^2B_2(q) (q=2^{2m+1}\geq 8)$, ${}^2G_2(q)(q=3^{2m+1}\geq 27)$, ${}^2F_4(q)(q=2^{2m+1}\geq 8)$, ${}^2F_4(2)'$, ${}^3D_4(q )$, $E_6(q),$ ${}^2E_6(q)$, $E_7(q), E_8(q)\}$. By Hypothesis, $x\in T$ has order $5$ and $|T|_5=5$. Thus $T \neq {}^2F_4(2)'$ as $5 \nmid |{}^2F_4(2)'|$.

Let $p=5$. By checking the order of $T$ in \cite[Table 5.1.B]{K-Lie}, we have $q^2\mid |T|$ and hence $|T|_5\neq 5$, a contradiction.

Let $p\neq 5$. Again by \cite[Table 5.1.B]{K-Lie} and Observation~(1), if $T \in \{ F_4(q)$, $E_6(q)$, $E_7(q)$, $E_8(q)$, ${}^2E_6(q)\}$ then $5^2\mid |T|$, a contradiction. Furthermore, since $5\nmid (q^8+q^4+1)$, by Observation~(2) we have that if $T\in \{G_2(q),{}^3D_4(q )\}$ then $5\nmid |T|$ or $5^2\mid |T|$, a contradiction. By Observation~(3), if $T={}^2G_2(q)$ then $q=3^{2m+1} \equiv 2,3 (\mod 5)$ and  $5\nmid |T|$, and if $T={}^2F_4(q)$ then $q=2^{2m+1}  \equiv 2,3 (\mod 5)$ and $5^2\mid |T|$, of which both are impossible.
Thus, $T={}^2B_2(q)$ with $q=2^{2m+1}(m\geq 1) $ and $|T|=q^2(q^2+1)(q-1)$. Note that $q^2+1=(2^{2m+1}+2^{m+1}+1)(2^{2m+1}-2^{m+1}+1)$. Then it is easy to see that $|T|_5=(q^2+1)_5=(2^{2m+1}+2^{m+1}+1)_5$ or $(2^{2m+1}-2^{m+1}+1)_5$.
By \cite[Proposition 16 and Theorem 15]{S1964}, $\C_T(x)$ is abelian and $|\C_T(x)|=2^{2m+1}+2^{m+1}+1 $ or $2^{2m+1}-2^{m+1}+1 $. Since $\C_T(x)$ is abelian, $|\C_T(x)| \in \{5,10,15,20\} $ by Hypothesis. It follows that $m=1$  and $|\C_T(x)|= 2^{2m+1}-2^{m+1}+1 =5$, that is, $(T,\C_T(x))=({}^2B_2(8),\ZZ_5)$.
\end{proof}

At last, we consider the  simple classical groups.

\begin{lemma}\label{lm:classical}
Let $T$ be a  simple classical group of Lie type in characteristic $p$, but not alternating. Then $(T,\C_T(x))= (\PSL_2(11), \ZZ_{5})$, $(\PSL_2(19),\ZZ_{10})$, $(\PSL_2(29), \ZZ_{15})$,  $(\PSL_2(16),\ZZ_{15})$, $(\PSL_2(31)$, $\ZZ_{15})$, $(\PSL_2(41)$, $\ZZ_{20})$, $(\PSL_3(4), \ZZ_{5})$,  $(\PSL_4(3), \ZZ_{20})$, $(\PSL_5(2)$, $\ZZ_{15})$,
 $(\PSU_4(2), \ZZ_{5})$, $(\PSU_4(3),\ZZ_{5})$, $(\PSU_5(2),\ZZ_{15})$, or $(\POmega_7(3),\ZZ_{5} \times\Sy_4)$.
 \end{lemma}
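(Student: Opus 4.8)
The plan is to treat the four families of simple classical groups---linear, unitary, symplectic and orthogonal---in turn, using two inputs already available: the order formulas of \cite[Table~5.1.A]{K-Lie} and the fact, established above, that $|T|_5=5$. I adopt throughout the dimension ranges of \cite{K-Lie} so that each type is counted once and the exceptional isomorphisms with alternating groups (such as $\PSL_2(9)\cong\A_6$ and $\PSL_4(2)\cong\A_8$) are already absorbed into Lemma~\ref{lm:ansporadic}. First I settle the characteristic. If $p=5$ then $|T|$ contains the factor $q^N$ with $N\geq 1$ the number of positive roots, so $|T|_5\geq q=5^f$; then $|T|_5=5$ forces $q=5$ and $N=1$, whence $T\cong\PSL_2(5)\cong\A_5$, which is alternating. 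Hence $p\neq 5$, and I let $e$ be the multiplicative order of $q$ modulo $5$; by the Observation~(1), $e\in\{1,2,4\}$.

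The dimension bound comes from the fact that $5\mid q^i-1$ exactly when $e\mid i$, together with the lifting-the-exponent identity $(q^{em}-1)_5=(q^e-1)_5\,(m)_5$. For $T=\PSL_n(q)$ one has $|\SL_n(q)|_5=\prod_{i=2}^{n}(q^i-1)_5$, and since every factor indexed by a multiple of $e$ contributes at least $(q^e-1)_5\geq5$, the equality $|T|_5=5$ forces there to be exactly one $i\in\{2,\dots,n\}$ divisible by $e$, together with $(q^e-1)_5=5$; this means $n=2$ if $e=1$ and $e\leq n<2e$ if $e\geq2$, so in all cases $n\leq7$. Passing from $\SL$ to $\PSL$ does not affect the $5$-part, because $5\mid\gcd(n,q-1)$ would force $5\mid q-1$, i.e.\ $e=1$ and $n=2$, which is impossible. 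The analogous counts applied to the factors $q^i-(-1)^i$, $q^{2i}-1$, and the orthogonal products bound the dimension in the unitary, symplectic and orthogonal families likewise; parts (2) and (3) of the Observation then discard in bulk those $(q,e)$ for which the relevant product is divisible by $5^2$ or by no power of $5$.

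With the dimension bounded there remain, in each family, finitely many dimensions but---for the rank-one groups---still infinitely many fields; these are cut down by the centralizer bound $|\C_T(x)|\leq120$. The key tool is the orthogonal decomposition $V=V_1\perp W$ of the natural module, where $V_1=\ker(x-1)$ is the fixed space and $W$ is spanned by the eigenspaces for the four primitive fifth roots of unity, which form Frobenius orbits of length $e$. The centralizer of $x$ in the ambient classical group is then the direct product of a classical group on $V_1$ with a torus (a product of copies of $\GL_1(q^e)$) acting on $W$, and passing down to $T$ introduces only the bounded centre and determinant corrections. Since $|\GL(V_1)|$ and $|\Sp(V_1)|$ grow rapidly with $\dim V_1$ and $q$, the bound $|\C_T(x)|\leq120$ forces both to be small, leaving finitely many candidates in every family. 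Running through them produces the six groups $\PSL_2(q)$ with $q\in\{11,16,19,29,31,41\}$ (where $x$ is regular, so $\C_T(x)$ is the cyclic torus of order $(q\mp1)/\gcd(2,q-1)$), the groups $\PSL_3(4)$, $\PSL_4(3)$, $\PSL_5(2)$, the unitary groups $\PSU_4(2)$, $\PSU_4(3)$, $\PSU_5(2)$, and the single orthogonal group $\POmega_7(3)$; every symplectic group and every remaining orthogonal group is eliminated, either because the pertinent torus has order outside $\{5,10,15,20,60,120\}$---for instance in $\PSp_4(8)$, where $\C_T(x)$ is cyclic of order $q^2+1=65$---or because the fixed-space factor is too large.

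The step I expect to be the main obstacle is not the dimension bound, which is bookkeeping with the Observation and the lifting-the-exponent identity, but the exact determination of $\C_T(x)$ in the surviving cases. Two points need care. First, one must track the centre of $\SL$, $\SU$, $\Omega$ and the determinant and spinor-norm conditions in order to recover $\C_T(x)$ precisely rather than up to a bounded factor; this is what separates, for example, $\ZZ_{20}$ from $\ZZ_{40}$ in $\PSL_4(3)$. Second, the case $\POmega_7(3)$ is genuinely different: here $V_1$ is a nondegenerate $3$-space, $\mathrm{O}_3(3)\cong\ZZ_2\times\Sy_4$ contributes an $\Sy_4$ after imposing the $\Omega$-conditions, and the centralizer $\ZZ_5\times\Sy_4$ is non-abelian, so the abelian-torus reasoning used elsewhere does not apply and the structure is best pinned down by a direct computation in \magma{}. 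A secondary nuisance throughout is keeping the exceptional isomorphisms and dimension conventions straight so that no genuine group is overlooked and none is listed twice.
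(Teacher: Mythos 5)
Your strategy is essentially the one the paper follows: use $|T|_5=5$ to force $p\neq 5$ and to bound the dimension, then use $|\C_T(x)|\in\{5,10,15,20,60,120\}$ to pin down the field, and finally determine the exact isomorphism type of $\C_T(x)$ in the survivors (by hand or by \magma). The only substantive difference is one of packaging: the paper reads off $|\C_G(x)|$ for $G=\ID(T)$ from the tables of Burness--Giudici (Tables B.3, B.4, B.7, B.12), combined with the fusion fact $x^T=x^G$ so that $|\C_T(x)|=|\C_G(x)||T|/|G|$, whereas you re-derive the same formulas from the eigenspace decomposition $V=V_1\perp W$ of the natural module; since that decomposition is exactly what those tables encode, the two computations coincide. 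Your treatment of the linear, unitary and orthogonal families, and your list of survivors, agree with the paper's.

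There is, however, one concrete point where your argument, as stated, would fail: the elimination of the symplectic family. You assert that every symplectic group dies ``either because the pertinent torus has order outside $\{5,10,15,20,60,120\}$ \dots\ or because the fixed-space factor is too large.'' This is false for $\PSp_6(3)$. There $q\equiv 3\ (\mathrm{mod}\ 5)$, the element $x$ has a $2$-dimensional fixed space and a $4$-dimensional irreducible part, so the centralizer upstairs is $\Sp_2(3)\times\GU_1(3^2)$, of order $24\cdot 10$, and modulo the centre $|\C_T(x)|=q(q^2-1)(q^2+1)/(2,q-1)=120$, which lies in your allowed set; moreover the fixed-space factor $\Sp_2(3)$ is small. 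Thus $\PSp_6(3)$ passes both of your tests and is eliminated only because $\C_T(x)\cong\ZZ_5\times\SL_2(3)$, which has order $120$ but is not isomorphic to $\ZZ_5\times\Sy_4$ (nor to $\ZZ_5\times\A_4$); this is precisely the isomorphism-type check the paper performs with \magma\ in its Claim~3. Your framework does contain the needed tool --- the exact structural determination you flag for $\PSL_4(3)$ and $\POmega_7(3)$ --- but you never apply it to the symplectic family, so the claim ``every symplectic group is eliminated'' is unjustified at the one place where it is genuinely delicate. Once $\PSp_6(3)$ is disposed of this way (and $\PSp_4(3)\cong\PSU_4(2)$ is counted once, as your conventions arrange), your proof is complete and matches the lemma.
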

\begin{proof} We follow \cite[Definition 2.5.10]{CFSG} to denote by $ \ID(T)$ the group of inner-diagonal automorphisms of $T$. Then $T \unlhd \ID(T)$, where $T$ is identified with its inner automorphism group. Moreover,
 %$\ID(T)/T $ is solvable and the structure of
 $|\ID(T)/T|$ can be found  in \cite[Theorem 2.5.12 (c)]{CFSG}.

Let $q$ be a power of $p$. Following \cite{B-G}, let $G=\ID(T)$; let   $\mathbb{N}$ be the set of  positive integers and $\mathbb{N}_{0}=\mathbb{N} \cup \{ 0\} $; let $\Phi(5,q)=\mbox{min}\{d \in \mathbb{N}\ |\ 5 \div (q^d-1) \}$.
If $p\neq 5$, then $x$ is called  \emph{semisimple}, and by \cite[Theorem 3.1.12]{B-G}, $x^T=x^G$ and hence $|\C_T(x)|= |\C_G(x)||T|/|G|$.

By Hypothesis, $|T|_5=5$, and by \cite[Proposition 2.9.1]{K-Lie}, $\PSL_2(4)\cong\PSL_2(5)\cong \A_5$, $\PSL_2(9) \cong \A_6$ and $\PSL_4(2) \cong \A_8$. Since $T$ is not alternating, $T \neq \PSL_2(4)$, $\PSL_2(5)$, $\PSL_2(9)$ or $\PSL_4(2)$.
We finish the proof by considering the simple  classical  groups case by case in the following Claims~1-4.

\medskip

\noindent{\bf Claim 1.} Let $T=\PSL_n(q)$ with $n\geq 2$ and $(n,q) \neq (2,2),(2,3)$. Then we have $(T,\C_T(x))=(\PSL_2(11), \ZZ_{5})$, $(\PSL_2(19),\ZZ_{10})$, $(\PSL_2(29), \ZZ_{15})$,  $(\PSL_2(16)$, $\ZZ_{15})$, $(\PSL_2(31)$, $\ZZ_{15})$, $(\PSL_2(41)$, $\ZZ_{20})$, $(\PSL_3(4)$, $\ZZ_{5})$,  $(\PSL_4(3), \ZZ_{20})$ or $(\PSL_5(2), \ZZ_{15})$.

In this case, $|T|= (n,q-1)^{-1}q^{n(n-1)/2}\prod_{i=2}^{n}(q^i-1)$, $G=\PGL_n(q)$ and $|G/T|=(n,q-1)$.  Since $|T|_5=5$ and $T \neq \PSL_2(5)$, we have $p\neq 5$ and so $x$ is semisimple. By Observation, $q^4 \equiv 1 (\mod 5)$, and since $|T|_5=5$, we have $n\leq 7$.

Let $n=2$. By Huppert \cite[Chapter 2: Theorems 8.3-8.5]{H1967} $\C_T(x) \cong \ZZ_{(q-1)/(2,q-1)}$ or $\ZZ_{(q+1)/(2,q-1)}$. By Hypothesis, $\C_T(x) \in \{ \ZZ_5,\ZZ_{10},\ZZ_{15},\ZZ_{20},\ZZ_5 \times\Sy_4, \ZZ_5 \times \A_4\}$, and hence we have $(T,\C_T(x))= (\PSL_2(11), \ZZ_{5})$, $(\PSL_2(19)$, $\ZZ_{10})$, $(\PSL_2(29), \ZZ_{15})$,  $(\PSL_2(16)$, $\ZZ_{15})$, $(\PSL_2(31)$, $\ZZ_{15})$ or $(\PSL_2(41)$, $\ZZ_{20})$.

Let $n=3$. If $q \equiv 2,3(\mod 5)$ then $|T|_5\not=1$, and if $q \equiv 1(\mod 5)$ then $5^2 \mid |T|$, which contradict $|T|_5=5$. Thus, $q\equiv 4(\mod 5)$ and so $\Phi(5,q)=2$.
By \cite[Proposition 3.2.1]{B-G}, $r$ is $5$, the order of the semisimple element $x$, and therefore  only the second row in  \cite[Table B.3]{B-G} occurs because $i=\Phi(5,q)=2$:
 \begin{equation}\label{eq:psl}
 \begin{split}
 \textstyle |\C_G(x)|=&  \textstyle{(q-1)^{-1}|\GL_e(q)|\prod_{j=1}^{t}|\GL_{a_j}(q^i)|}  ~\text{,~where~} e,a_j \in \mathbb{N}_0, i=\Phi(5,q), \textstyle{t=\frac{5-1}{i}}  \\
 & \text{such~that~} \textstyle{n=e+i\sum_{j=1}^{t}a_j } \text{~and~} (a_1,\cdots,a_t) \neq (0,\cdots,0).
\end{split}
 \end{equation}
Then $t=2$, and since $3=n=e+2(a_1+a_2)$ and $(a_1,a_2) \neq (0,0)$, we have $e=1$ and $(a_1,a_2)=(1,0)$ or $(0,1)$. It follows that $|\C_G(x)|=(q-1)^{-1}|\GL_1(q)||\GL_1(q^2)|=q^2-1$ and so  $\C_T(x)|= |\C_G(x)||T|/|G|=(q^2-1)/(3,q-1)$.
By Hypothesis, $|\C_T(x)| \in \{5,10,15,20,60,120 \}$, and then $q=4$, $|\C_T(x)|=5$ and   $(T,\C_T(x))=(\PSL_3(4),\ZZ_5)$.

Let $4\leq n \leq 7$. If $q \equiv 1,4(\mod 5)$ then $5^2 \mid |T|$, a contradiction. Thus, $q \equiv 2,3(\mod 5)$ and $\Phi(5,q)=4$.
By \cite[Table B.3]{B-G}, $|\C_G(x)|$ also satisfies Eq.~(\ref{eq:psl}).  Then $i=4$, $t=1$, $n=e+4a_1=e+4$ ($n\leq 7$) and $|\C_G(x)|=(q-1)^{-1}|\GL_e(q)||\GL_1(q^4)|$.
If $e=2$ or $3$, then $|\C_T(x)|=\frac{|T|}{|G|}|\C_G(x)|=\frac{|\GL_e(q)||\GL_1(q^4)|}{(q-1)(4+e,q-1)}=\frac{q(q^2-1)(q^4-1)}{(6,q-1)} \text{~or~}\frac{q^3(q^3-1)(q^2-1)(q^4-1)}{(7,q-1)}$,
respectively. It is easy to check
$|\C_T(x)| \notin \{5,10,15,20,60,120 \}$, and hence $n=4$ or $5$.

Assume $n=4$. Then $e =0$ and so
\[ |\C_T(x)|=\frac{|T|}{|G|}|\C_G(x)|=\frac{|\GL_1(q^4)|}{(q-1)(4,q-1)}=\frac{q^4-1}{(q-1)(4,q-1)}\in \{5,10,15,20,60,120 \}.\]
Recall that $T \neq \PSL_4(2)$. Since $|\C_T(x)|\in \{5,10,15,20,60,120 \}$, we have $q=3$ and $T=\PSL_4(3)$. By {\sc Magma}, $\C_T(x)=\ZZ_{20}$, that is, $(T,\C_T(x))=(\PSL_4(3),\ZZ_{20})$.

Assume $n=5$. Then $e =1$ and
\[  |\C_T(x)|=\frac{|T|}{|G|}|\C_G(x)|=\frac{|\GL_1(q)||\GL_1(q^4)|}{(q-1)(5,q-1)}=\frac{q^4-1}{(5,q-1)}\in \{5,10,15,20,60,120 \}.\]
 Since $|\C_T(x)|\in \{5,10,15,20,60,120 \}$, we have $q=2$ and $(T,\C_T(x))=(\PSL_5(2), \ZZ_{15})$.

\smallskip

\noindent{\bf Claim 2.} Let $T=\PSU_n(q)$ with $n \geq 3$ and $(n,q) \neq (3,2)$. Then $(T,\C_T(x))=(\PSU_4(2),\ZZ_{5})$, $(\PSU_4(3),\ZZ_{5})$ or $(\PSU_5(2),\ZZ_{15})$. \label{psu}

In this case, $|T|=  (n,q+1)^{-1}q^{n(n-1)/2}\prod_{i=2}^{n}(q^i-(-1)^i)$, $G=\PGU_n(q)$ and $|G/T|=(n,q+1)$.
 Since $|T|_5=5$, $p\neq 5$ and $x$ is semisimple. By Observation~(1), $n\leq 7$.

Let $n=3$. Since $|T|_5=1$, we have  $q \equiv 1(\mod 5)$ and so $\Phi(5,q)=1$. By \cite[Table B.4]{B-G} and \cite[Proposition 3.3.2(i)]{B-G}, we have
 \begin{equation}\label{eq:psu}
 \begin{split}
 \textstyle |\C_G(x)|&  =\textstyle{(q+1)^{-1}|\GU_e(q)|\prod_{j=1}^{s/2}|\GL_{a_j}(q^{2b})|}  ~\text{,~where~} e,a_j \in \mathbb{N}_0, i=\Phi(5,q),   \\
  & \textstyle{s=\frac{5-1}{b}}, b=i(i\text{~odd}) \text{~or~} i/2 (i\text{~even}), \text{~such~that~} \textstyle{n=e+2b\sum_{j=1}^{s}a_j } \\ &\text{and~} (a_1,\cdots,a_t) \neq (0,\cdots,0).
\end{split}
 \end{equation}
Thus, $b=1$, $e=1$, $s/2=2$, $(a_1,a_2)=(1,0)$ or $(0,1)$. It follows that   $|\C_G(x)|=(q+1)^{-1}|\GU_1(q)||\GL_1(q^2)|=q^2-1$, and $|\C_T(x)|=|\C_G(x)||T|/|G|=(q^2-1)/(3,q+1)\in \{5,10,15,20,60,120 \}$, forcing $q=4$, which contradicts $q \equiv 1(\mod 5)$.

Let $4\leq n \leq 7$. If $q \equiv 1,4(\mod 5)$, then $5^2 \mid |T|$, a contradiction. Therefore $q \equiv 2,3(\mod 5)$ and so $\Phi(5,q)=4$.
By \cite[Table B.4]{B-G} and \cite[Proposition 3.3.2(i)]{B-G}, $|\C_G(x)|$ also satisfies Eq.~(\ref{eq:psu}), and since $s/2=1$, $|\C_G(x)|=(q+1)^{-1}|\GU_e(q)||\GL_{1} (q^4)|$, where $e \in N_0$ and $n=e+4 $.
If $e=2$ or $3$, then
$|\C_T(x)|=\frac{|T|}{|G|}|\C_G(x)|=\frac{|\GU_e(q)||\GL_1(q^4)|}{(q+1)(4+e,q+1)}=\frac{q(q^2-1)(q^4-1)}{(6,q+1)} \text{~or~}\frac{q^3(q^3+1)(q^2-1)(q^4-1)}{(7,q+1)}$,
respectively. It is easy to check
$|\C_T(x)| \notin \{5$, $10$, $15$, $20$, $60$, $120\}$, and hence $n=4$ or $5$.

Assume $n=4$. Then $e =0$ and so
\[ |\C_T(x)|=\frac{|T|}{|G|}|\C_G(x)|=\frac{|\GL_1(q^4)|}{(q+1)(4,q+1)}=\frac{q^4-1}{(q+1)(4,q+1)}\in \{5,10,15,20,60,120 \}.\]
It follows that $q=2$ and $3$, and $|\C_T(x)|=15$ and $20$, respectively. By {\sc Magma},  $(T,\C_T(x))=(\PSU_4(2),\ZZ_{15})$ or $(\PSU_4(3),\ZZ_{20})$.

Assume $n=5$. Then $e =1$ and so
\[  |\C_T(x)|=\frac{|T|}{|G|}|\C_G(x)|=\frac{|\GU_1(q)||\GL_1(q^4)|}{(q+1)(5,q+1)}=\frac{q^4-1}{(5,q+1)}\in \{5,10,15,20,60,120 \}.\]
It follows that $q=2$ and $|\C_T(x)|=15$, that is, $(T,\C_T(x))=(\PSU_5(2),\ZZ_{15})$.

%If $n=6$, then $e=2$ and  $q=2$ or $3$. However,
%\[  |\C_T(x)|=\frac{|T|}{|G|}|\C_G(x)|=\frac{|\GU_2(q)||\GL_1(q^4)|}{(q+1)(6,q+1)}=\frac{q(q^2-1)(q^4-1)}{(6,q+1)}=30 \text{ ~or~ }  960 .\]
%
%If $n=7$, then $e=3$ and  $q=2$. However,
%\[  |\C_T(x)|=\frac{|T|}{|G|}|\C_G(x)|=\frac{|\GU_3(q)||\GL_1(q^4)|}{(q+1)(7,q+1)}=\frac{q^3(q^3+1)(q^2-1)(q^4-1)}{(6,q-1)}=3240 .\]
%

 \smallskip

\noindent{\bf Claim 3.} Let $T=\PSp_{n}(q)$ with $n\geq 4$ even and $(n,q) \neq (4,2)  $.  Then   $(T,\C_T(x))=(\PSp_4(3),\ZZ_5)\cong (\PSU_4(2),\ZZ_5) $.  \label{psp}

If $(n,q)=(4,3)$, then $T=\PSp_4(3) \cong \PSU_4(2)$ by \cite[Proposition 2.9.1]{K-Lie}), which is considered in Claim~2. Thus, we may suppose $(n,q) \neq (4,3)$ and will drive a contradiction.

In this case,  $|T|= (2,q-1)^{-1}q^{n^2/4}\prod_{i=1}^{n/2}(q^{2i}-1)$, $G=\PGSp_n(q)$ and $|G/T|=(2,q-1)$.  Since $|T|_5=5$, we have $p\neq 5$ and $x$ is semisimple. By Observation, $n \in \{4,6\}$ and $q \equiv 2,3 (\mod 5)$. Then $\Phi(5,q)=4$. By \cite[Table B.7]{B-G} and \cite[Proposition 3.4.3 (ii)]{B-G},
 \begin{equation}\label{eq:psp}
 \begin{split}
 \textstyle |\C_G(x)|&  =\textstyle{ |\Sp_e(q)|\prod_{j=1}^{t}|\GU_{a_j}(q^{i/2})|}  ~\text{~where~} e,a_j \in \mathbb{N}_0, i=\Phi(5,q),   \textstyle{t=\frac{5-1}{i}}\\
  & \text{~such~that~} \textstyle{n=e+i\sum_{j=1}^{t}a_j} \text{~and~} (a_1,\cdots,a_t) \neq (0,\cdots,0).
\end{split}
 \end{equation}
Suppose $n=4$. Then $e=0$, $t=1$ and
\[ |\C_T(x)|=\frac{|T|}{|G|}|\C_G(x)|=\frac{|\GU_{1} (q^{2})}{(2,q-1)}=\frac{ q^2+1 }{(2,q-1)}\in \{5,10,15,20,60,120 \}.\]
This is impossible because $q \equiv 2,3 (\mod 5)$ and $q \notin \{2,3\}$.

Suppose $n=6$. Then $e=2$, $t=1$ and
\[ |\C_T(x)|=\frac{|T|}{|G|}|\C_G(x)|=\frac{|\Sp_2(q)||\GU_{1} (q^{2})}{(2,q-1)}=\frac{ q(q^2-1)(q^2+1) }{(2,q-1)}\in \{5,10,15,20,60,120 \}.\]
It follows that $q=3$ and $|\C_T(x)|=120$. However, by {\sc Magma}, $\C_{T}(x)\cong\ZZ_5 \times \SL_2(3) \not\cong \ZZ_5 \times \Sy_4$ for $T=\PSp_6(3)$, contradicting Hypothesis.

\smallskip

\noindent{\bf Claim 4.} Let $T=\POmega^{\epsilon}_n(q)$ with $n \geq 7$ and $\epsilon \in \{ \circ,+,-\}$, where $nq$ is odd if $\epsilon=\circ$ and $n$ is even if $\epsilon=+$ or $-$. Then  $(T,\C_T(x))=(\POmega_7(3),\ZZ_5 \times\Sy_4)$. \label{PO7}

Note that
\[
\begin{split}
&|\POmega_{n}(q)^\circ|=|\POmega_{n}(q)|=\textstyle{2^{-1}q^{ (n-1)^2/4}\prod_{i=1}^{(n-1)/2} (q^{2i}-1)} \text{~with~} nq \text{~odd,~and} \\
&|\POmega_{n}^{\pm}(q)| =\textstyle{(4,q^{n/2} \mp 1)^{-1}q^{n(n-2)/4}(q^{n/2} \mp 1)\prod_{i=1}^{n/2-1}(q^{2i}-1)} \text{~with ~} n \text{~even}.
\end{split}
 \]
 Since $|T|_5=5$, we have  $p \neq 5$ and $x$ is semisimple, and by Observation, $T=\POmega_7(q)^\circ$ or $\POmega_8^{-}(q)$, where  $q \equiv 2,3(\mod 5)$.   Then $\Phi(5,q)=4$. By \cite[Table B.12]{B-G} and \cite[Proposition 3.5.4 (ii)]{B-G}, we have
 \begin{equation}\label{eq:po}
 \begin{split}
 \textstyle |\C_G(x)|&  =\textstyle{2^{-\delta} |\O^{\epsilon'}_e(q)|\prod_{j=1}^{t}|\GU_{a_j}(q^{i/2})|}  ~\text{,~where~} e,a_j \in \mathbb{N}_0, i=\Phi(5,q),   \textstyle{t=\frac{5-1}{i}}\\
  & \text{such~that~} \textstyle{n=e+i\sum_{j=1}^{t}a_j},\ (a_1,\cdots,a_t) \neq (0,\cdots,0), \\
  & \delta= 1 \text{~if~} e \neq 0  \text{~and~} \delta= 0 \text{~otherwise~}, \text{~and~}\epsilon' = \epsilon \text{~if and only if either~}  n   \\
  & \text{is~odd,} \text{~or~} \text{there~is~an~even~number~of~odd~terms~}a_j.\\
\end{split}
 \end{equation}
Note that the above notation $\O^{\epsilon'}_e(q)$ in \cite[Section 2.5]{B-G} has the same meaning as the nation $\GO^{\epsilon'}_e(q)$ in Atlas \cite[Chapter 2: Section 4]{Atlas}.

Assume $T=\POmega_7(q)^\epsilon$ with $\epsilon=\circ$ and $q \equiv 2,3(\mod 5)$. Then $i=4$, $t=1$, $e=3$, $a_1=1$, $\delta=1$ and $\epsilon'=\epsilon$. By \cite[Theorem 2.5.12 (c)]{CFSG}),  $|G|/|T|=(2,q-1)=2$, and so
\[ |\C_T(x)|=\frac{|T|}{|G|}|\C_G(x)|=\frac{2^{-1}|\O_3(q)||\GU_1(q^2)|}{2}=\frac{q(q^2-1)(q^2+1)}{2} \in \{5,10,15,20,60,120 \}.\]
It follows that $q=3$ and $|\C_T(x)|=120$, and by {\sc Magma}, $\C_T(x) \cong \ZZ_5 \times\Sy_4$, that is, $(T,\C_T(x))=(\POmega_7(3),\ZZ_5 \times\Sy_4)$.

Assume $T=\POmega_8^{\epsilon}(q)$ with $\epsilon=-$ and $q \equiv 2,3(\mod 5)$. Then  $i=4$, $t=1$,
$e=4$, $a_1=1$, $\delta=1$ and $\epsilon'=+$. By \cite[Theorem 2.5.12 (c)]{CFSG}, $|G|/|T|=(2,q-1)$, and so
\[ |\C_T(x)|=\frac{2^{-1}|\O_4^{+}(q)||\GU_1(q^2)|}{(2,q-1)}=\frac{q^2(q^2-1)^2(q^2+1)}{(2,q-1)}\in \{5,10,15,20,60,120 \},\]
which is impossible.
\end{proof}

\section{Proof of Theorem~\ref{th:infinite}}\label{sec:6p4}

The goal of this section is to prove Theorem~\ref{th:infinite}. Let $p$ be an odd prime. We set the following  notation in this section:
\[M=\langle a\rangle\times \langle b\rangle\times\langle c\rangle\times\langle d \rangle\cong \ZZ_p^4, \mbox{ and } \Delta=\{a,b,c,d,e\} \mbox{ where } e=a^{-1}b^{-1}c^{-1}d^{-1}.\]

Let $\Aut(M,\Delta)$ be the subgroup of $\Aut(M)$ fixing $\Delta$ setwise. Since $\langle \Delta\rangle=M$, $\Aut(M,\Delta)$ has a faithful action on $\Delta$. Recall that the map  $a\mapsto b$, $b\mapsto c$, $c\mapsto d$ and $d\mapsto e$ induces an automorphism of $\Aut(M)$ of order $5$, and hence $\Aut(M,\Delta)$ is transitive on $\Delta$. Any permutation on $\{a,b,c,d\}$ induces an automorphism of $M$, and fixes $e$. This implies that $\Aut(M,\Delta)\cong \Sy_\Delta\cong\Sy_5$.  For not making the notation too cumbersome, we identify $\Aut(M,\Delta)$ with $\Sy_{\Delta}$, and then for any $\delta\in \Sy_{\Delta}$, $\delta$ also denotes the automorphism of $M$ induced by $a\mapsto a^\delta$, $b\mapsto b^\delta$, $c\mapsto c^\delta$ and $d\mapsto d^\delta$.  We further set
\[\Delta^{-1}=\{a^{-1},b^{-1},c^{-1},d^{-1},e^{-1}\}, \ \beta=(a,b^{-1},d,c^{-1})(a^{-1},b,d^{-1},c)(e,e^{-1}),   \ \text{ and }  \ L=\langle \A_\Delta,\beta\rangle,\]
where $\beta$ is the automorphism of $M$ induced by $a\mapsto b^{-1}$, $b\mapsto d^{-1}$, $c\mapsto a^{-1}$ and $d\mapsto c^{-1}$.

Clearly, $\A_{\Delta}\leq \Sy_\Delta$ also fixes $\Delta^{-1}$ setwies, and $\beta$ interchanges $\Delta$ and $\Delta^{-1}$. Then $L$ is transitive on $\Delta\cup \Delta^{-1}$ with $\{\Delta,\Delta^{-1}\}$ as a complete imprimitive block system. It follows that $\A_\Delta^\beta$ fixes $\Delta$ setwise, and hence  $\A_\Delta^\beta\leq \Sy_\Delta$. It follows that $\A_\Delta^\beta=\A_\Delta$. Clearly, $\beta^2=(a\ d)(b\ c)\in \A_\Delta$, and therefore, $|L:\A_\Delta|=2$. If $\C_L(\A_\Delta)\not=1$, then $L\cong \A_{\Delta}\times\ZZ_2$,
contradicting that $\beta$ has order $4$. Thus, $\C_L(\A_\Delta)=1$ and $L\cong\Sy_5$. In particular, we view $L$ as a permutation group on $\Delta\cup\Delta^{-1}$, and for simplicity, we still view the subgroup $\A_{\Delta}$ of $L$ as a permutation group on $\Delta$. This means that we will write an element in $\A_{\Delta}$, like $\b^2=(a,d)(b,c)$,  as a permutation on $\Delta$, and an element
in $L \setminus \A_{\Delta}$, like $\beta=(a,b^{-1},d,c^{-1})(a^{-1},b,d^{-1},c)(e,e^{-1})$, as a permutation on $\Delta\cup\Delta^{-1}$.

Note that $L\leq \Aut(M)$. Then we may define the semiproduct group $M\rtimes L$ as following:
\[G:=M\rtimes L=\{x\delta\ |\ x\in M, \delta\in L\} \mbox{ with } x\delta=\delta x^\delta,\]
where $x^\delta$ is the image of $x$ under $\delta$. Also we write $M\leq G$ and $L\leq G$ by identifying $x\in M$ and $\delta\in L$ with $x1$ and $1\delta$ in $G$, respectively. Recall that $\Delta\subset M\leq M\A_{\Delta}\leq L$.
It is easy to see that $M\A_\Delta$ has a conjugacy class $\Delta$, and $G$ has a conjugacy class $\Delta\cup \Delta^{-1}$. (Note that $L\cong\Sy_\Delta\cong\Sy_5$, and we can also define the other semiproduct $M\rtimes \Sy_\Delta$, but it cannot produce the graph in question.) Take the elements $\alpha$,  $\gamma$ and $u$ in $G$ as following:
\[\alpha=(a,b,c,d,e),\ \  \gamma=(a,b)(c,d), \ \ u=ab^{-1}c^{-1}d,\ \ H=\langle \alpha,\beta\rangle.\]

Now we are ready to define the coset graph
\[\cG_{6p^4} =\Cos(G, H, H{\gamma^u}H).\]
By using {\sc Magma}~\cite{Magma}, $\cG_{6p^4}$ has no quasi-semiregular automorphism for $p=3$ and $5$.

To prove Theorem~\ref{th:infinite}, it is enough to prove the following theorem.

\begin{theorem}\label{insolubleauto:6p^4}
For each prime $p\geq 7$, $\Ga_{6p^4}$ is a connected pentavalent symmetric graph admitting a quasi-semiregular automorphism,  and $\Aut(\cG_{6p^4})\cong \ZZ_p^4\rtimes\Sy_5$.
\end{theorem}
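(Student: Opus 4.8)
The plan is to exhibit $\cG_{6p^4}=\Cos(G,H,H\gamma^uH)$ as a regular $\ZZ_p^4$-cover of $K_6$ and then to pin down its full automorphism group with the classification Theorems~\ref{th:valp} and~\ref{th:val5}. First I would record the combinatorial data of the coset triple. Since $\alpha=(a,b,c,d,e)$ has order $5$ and $\beta$ normalises $\langle\alpha\rangle$ with $\beta\alpha\beta^{-1}=\alpha^2$, we get $H=\langle\alpha,\beta\rangle\cong\AGL_1(5)$, $H\cap M=1$ and $|G:H|=6p^4$; moreover $H$ is core-free, because a normal subgroup of $G$ contained in $H\le L$ would centralise $M$, and $L\cong\Sy_5$ acts faithfully on $M$, so $\C_L(M)=1$. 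Hence $\hat G\cong G$ by the remarks following Proposition~\ref{pro:cosetcons}. To apply that proposition I must check that $\gamma^u$ is an involution outside $H$ with $\langle H,\gamma^u\rangle=G$. Writing $\gamma^u=w\gamma$ with $w=u^{-1}u^{\gamma}=a^{-2}b^2c^2d^{-2}\in M$, one finds $(\gamma^u)^2=ww^{\gamma}=1$, and $w\neq1$ gives $\gamma^u\notin L\supseteq H$.

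For connectivity set $K=\langle H,\gamma^u\rangle$. Reducing modulo $M$ gives $KM/M=\langle\alpha,\beta,\gamma\rangle=L$, since a $5$-cycle together with $\gamma=(a,b)(c,d)$ (which does not invert $\alpha$) generates $\A_\Delta$, and $\beta\notin\A_\Delta$. Thus $KM=G$, and as $K\cap M\le M$ is normalised by $K$ and centralised by $M$, it is normal in $KM=G$, i.e.\ an $L$-submodule of $M$. The key point is that for $p\geq7$ the module $M\cong\ZZ_p^4$ is irreducible under $L$ (it restricts to the deleted permutation module of $\A_\Delta$, which is irreducible as $p\nmid60$), so $K\cap M\in\{1,M\}$; and $K\cap M=1$ would make $K$ a complement to $M$ containing $H$, whence (all complements being $M$-conjugate as $p\nmid|L|$) $K=L^m$ with $m\in\C_M(H)\subseteq\C_M(\alpha)=1$, forcing $K=L\not\ni\gamma^u$, a contradiction. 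Hence $K=G$. For the valency I would compute $H\cap H^{\gamma^u}$: for $h\in H$ the element $\gamma^u h(\gamma^u)^{-1}$ equals $h'\,(w^{h'}w^{-1})$ with $h'=\gamma h\gamma$, so it lies in $H$ precisely when $h\in H\cap H^{\gamma}$ and $h'$ fixes $w$. In $L$ one checks $H\cap H^{\gamma}=\langle\beta\rangle\cong\ZZ_4$ (using $\gamma\beta\gamma=\beta^{-1}$ and $\gamma\notin H$), and $u$ is engineered so that $w^{\beta}=w$; therefore $H\cap H^{\gamma^u}=\langle\beta\rangle$ and $\cG_{6p^4}$ has valency $|H:\langle\beta\rangle|=5$.

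Next I would produce the quasi-semiregular automorphism and identify the normal quotient. The subgroup $\hat M\cong\ZZ_p^4\unlhd\hat G$ has $|G:HM|=6$ orbits, and because $\gamma^u\equiv\gamma\pmod M$ the quotient is $\cG_{\hat M}=\Cos(L,H,H\gamma H)$, a connected arc-transitive graph of valency $5$ on $6$ vertices, that is $K_6$; since $\cG_{6p^4}$ also has valency $5$, it is a normal $\hat M$-cover of $K_6$. The automorphism $\hat\alpha\in\hat G$ of order $5$ fixes the vertex $H$ and projects to a $5$-cycle of $\PGL_2(5)\cong L$ on the six vertices of $K_6$, which fixes a unique vertex; on the fibre over that vertex, identified with $M$, $\hat\alpha$ acts as the map $Hm\mapsto Hm^{\alpha}$, i.e.\ as $\alpha|_M$, the fixed-point-free automorphism of Lemma~\ref{lm:p45qs} (here $p\neq5$). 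Hence $\hat\alpha$ fixes exactly the vertex $H$ and is semiregular elsewhere, so it is quasi-semiregular.

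The main obstacle is the final claim $\Aut(\cG_{6p^4})=\hat G$. Put $A=\Aut(\cG_{6p^4})\geq\hat G$. As $|V(\cG_{6p^4})|=6p^4$ is not a power of $2$, $A$ has no solvable arc-transitive subgroup, so Theorem~\ref{th:valp}(2) furnishes a nilpotent $N\unlhd A$ with $A/N$ almost simple and $\cG_N$ a pentavalent $\soc(A/N)$-arc-transitive graph admitting a quasi-semiregular automorphism; by Theorem~\ref{th:val5}(2), $\cG_N\in\{K_6,\cG_{36},\dots,\cG_{22176}\}$. Now $|V(\cG_N)|=6p^4/|N|$ has all prime divisors in $\{2,3,p\}$, and checking this against the orders in Theorem~\ref{th:val5} leaves only $\cG_N=K_6$ (so $|N|=p^4$) — except that when $p=11$ the value $66$ is also arithmetically possible. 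That case I would eliminate directly: $\cG_N=\cG_{66}$ would give $|N|=11^3$ and $A/N\le\Aut(\cG_{66})=\PGL_2(11)$, whence $|A|=11^3|A/N|\le 120\cdot11^4=|\hat G|$, forcing $A=\hat G$ and a normal subgroup $N$ of $\hat G$ of order $11^3$ inside the $\Sy_5$-irreducible Sylow $11$-subgroup $\hat M$, which is impossible. Thus $\cG_N=K_6$, $|N|=p^4$; since $A/N\le\Sy_6$ has order coprime to $p$, both $N$ and $\hat M$ are Sylow $p$-subgroups, and as $N$ is normal it is unique, so $\hat M=N\unlhd A$. Finally $A/\hat M$ is an almost simple subgroup of $\Aut(K_6)=\Sy_6$ containing the transitive maximal subgroup $L\cong\PGL_2(5)\cong\Sy_5$, hence $A/\hat M\in\{\Sy_5,\Sy_6\}$; the case $\Sy_6$ would make $\hat M\cong\ZZ_p^4$ a faithful $4$-dimensional $\ZZ_p$-representation of $\Sy_6$ (faithful because $L$ already acts faithfully), contradicting the fact that every faithful representation of $\Sy_6$ has dimension at least $5$. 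Therefore $A/\hat M=L$, $|A|=120p^4=|\hat G|$, and $A=\hat G\cong\ZZ_p^4\rtimes\Sy_5$.
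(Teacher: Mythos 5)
Your proposal is correct, and up to the arc-transitivity and valency computations it parallels the paper; but for the main claim $\Aut(\cG_{6p^4})=\hat{G}$ it takes a genuinely different route. The paper argues self-containedly: it chooses $X\leq\Aut(\cG_{6p^4})$ with $\hat{G}$ maximal in $X$, splits on the core $\hat{G}_X$, and in the core-free case runs a primitive-group analysis (Feng--Guo's stabilizer list, the relation $(\gamma^u\alpha^2)^5=1$ producing a $5$-cycle that forbids $3$-arc-transitivity, and tables of primitive groups of the relevant degrees), reserving the representation-theoretic contradiction (no $4$-dimensional irreducible $\ZZ_p$-representation of $\A_6$ for $p\geq7$) for the case of a nontrivial core. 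You instead invoke the paper's own Theorems~\ref{th:valp}(2) and~\ref{th:val5}(2) --- legitimate, since both are proved in earlier sections, so there is no circularity --- which pin the normal quotient $\cG_N$ down to $K_6$ or one of the $\cG_n$; divisibility of $6p^4$ then leaves only $K_6$ (plus $\cG_{66}$ when $p=11$, which you correctly eliminate via $|A|\leq|\hat{G}|$ and irreducibility of $\hat{M}$), and a Sylow argument plus the absence of a faithful $4$-dimensional $\ZZ_p$-representation of $\Sy_6$ finishes. Your version is shorter and conceptually cleaner, at the price of importing the CFSG-based classification behind Theorem~\ref{th:val5}(2), whereas the paper keeps Theorem~\ref{insolubleauto:6p^4} independent of that classification. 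Your smaller deviations are also sound: connectivity via conjugacy of complements to $M$ (valid as $p\nmid|L|$) together with irreducibility of the deleted permutation module for $p\nmid 60$, replacing the paper's explicit computation $(\alpha\gamma^u)^3=a^6c^{-6}\neq1$ and its hands-on Claim~1; and quasi-semiregularity of $\hat{\alpha}$ read off the $K_6$-quotient and its fibres instead of the paper's direct coset calculation. Two compressed steps deserve a line in a final write-up: faithfulness of the $\Sy_6$-action on $\hat{M}$ holds because its kernel is a normal subgroup of $\Sy_6$ meeting the transitive $\Sy_5\cong\hat{G}/\hat{M}$ trivially (as $\C_{\hat{G}}(\hat{M})=\hat{M}$), hence is trivial; and $\gamma\notin H$ needs the observation that the involutions of $H\cap\A_\Delta$ are the $\alpha$-conjugates of $\beta^2=(a,d)(b,c)$, none of which equals $(a,b)(c,d)$.
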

\begin{proof}
It is easy to see that $\alpha^\beta=(a\ b\ c\ d\ e)^\beta=(b\ d\ a\ c\ e)=\alpha^2$ and $\beta^\gamma=((a\ b^{-1}\ d\ c^{-1})(a^{-1}\ b\ d^{-1}\ c)(e,e^{-1}))^{(a\ b)(c\ d)}=\b^{-1}$.
Then $H=\langle \alpha,\beta\rangle\cong \AGL_{1}(5)$, the Froubenius group of order $20$, and
$\langle \beta,\gamma\rangle\cong \D_{8}$, the dihedral group of order $8$. Then
 \[L=\langle\gamma,H\rangle=\langle \alpha,\beta,\gamma\rangle \text{ and } {\gamma^u}=\gamma^{ab^{-1}c^{-1}d}=\gamma(a^{-1}bcd^{-1})^\gamma (ab^{-1}c^{-1}d)=\gamma u^2.\]

Noting that $u^\beta=(ab^{-1}c^{-1}d)^\beta=a^\beta (b^{-1})^\beta (c^{-1})^\beta d^\beta=ab^{-1}c^{-1}d=u$, we derive that $\beta^{\gamma^u}=(\beta^\gamma)^{u^2}=\beta^{-1}$, which implies that $\beta\in H\cap H^{\gamma^u}$.
Suppose $H^{\gamma^u}=H$. Since $\langle \alpha\rangle$ is a normal Sylow $5$-subgroup of $H$, we have $\langle\alpha\rangle^{\gamma^u}=\langle\alpha\rangle$ and hence $\langle \alpha^\gamma\rangle=\langle\alpha^{u^{-2}}\rangle$,
but this is not true because $\alpha^{u^{-2}}=u^2\alpha u^{-2}=\alpha (u^2)^\alpha u^{-2}=\alpha a^{-4}b^2c^{-2}d^{-6}\not\in L$ (note that $L\cap M=1$). Thus, $H^{\gamma^u}\not=H$, and since $\beta\in H\cap H^{\gamma^u}$, we have $H^{\gamma^u}\cap H=\langle \beta\rangle$ and $|H:H^{\gamma^u}\cap H|=5$. Since $L\cong\Sy_5$, the core $H_L=1$, and hence $H_G=1$. By Propostion~\ref{pro:cosetcons},
 $\cG_{6p^4}$ is a $\hat{G}$-arc-transitive graph of valency $5$ and order $6p^4$, where $\hat{G}\cong  G \cong \ZZ_p^4\rtimes\Sy_5$.

 \vskip 0.2cm
\noindent{\bf Claim 1:} $M$ is a minimal normal subgroup in $M\A_\Delta$, where $M\A_\Delta\leq G$.

Suppose by contradiction that $K\leq M$ is a normal subgroup of $M\A_\Delta$ with $1\not=K\not=M$. Since $M\cong\ZZ_p^4$, we have $K\cong \ZZ_p, \ZZ_p^2$ or $\ZZ_p^3$ and $M/K\cong \ZZ_p, \ZZ_p^2$ or $\ZZ_p^3$. Let $\overline{\Delta}=\{aK,bK,cK,dK,eK\}$. Since $\langle \Delta \rangle=M$, we have
$\langle \overline{\Delta}\rangle=M/K\cong \ZZ_p, \ZZ_p^2$ or $\ZZ_p^3$.

If $aK=bK$, then $bK=cK=dK=eK$ because $\alpha=(a, b,c,d,e)$ induces an automorphism of $M/K$, which forces $ab^{-1}\in K$, $ac^{-1}\in K$, $ad^{-1}\in K$ and $ae^{-1}=a^2bcd\in K$. Then $
 a^5=(ab^{-1})(ac^{-1})(ad^{-1})(a^2bcd) \in K$. Since $p\geq 7$, we derive $a\in K$, and then  $b,c,d\in K$, forcing $K=M$, a contradiction. Thus, $aK\not=bK$, and since $L\cong\A_5$ is simple, $L$ has a faithful conjugate action on $M/K$. In particular, $L\leq \Aut(M/K)$, forcing  $M/K\not\cong\ZZ_p$. Thus, $M/K\cong \ZZ_p^2$ or $\ZZ_p^3$.

Thus, we may identify $L$ with a group of automorphism of $M/K$, that is, $(xK)^\delta=x^\delta K$ for any $x\in M$ and $\delta\in L$. In particular, $(aK)^\a=bK$, $(bK)^\a=cK$, $(cK)^\a=dK$, $(dK)^\a=eK$ and $(eK)^\a=aK$. Since $M/K=\langle \overline{\Delta}\rangle$, we obtain that if $M/K\cong \ZZ_p^2$ then $\a$ cannot fix the subgroup $\langle aK\rangle$ and hence $M/K=\langle aK\rangle\times\langle bK\rangle$. Similarly, one may easily prove that if $M/K\cong \ZZ_p^3$ then $M/K=\langle aK\rangle\times\langle bK\rangle\times\langle cK\rangle $.

First let $M/K\cong \ZZ_p^2$. Then $M/K=\langle aK\rangle\times\langle bK\rangle$, and $cK=a^iKb^jK$ for some $i,j\in\ZZ_p$, where $\mz_p$ is viewed as the field of order $p$. Since $(a,b,c)\in \A_\Delta$, we have $aK=(cK)^{(a,b,c)}=b^i(a^ib^j)^jK$, forcing $ij=1$ and $i+j^2=0$ in the field $\mz_p$. Thus $j^6=1$,  and since $(6,p)=1$, we have $j=1$, and then $i=1$ and $2=0$, forcing $p=2$, a contradiction. Now let $M/K\cong \ZZ_p^3$. Then $M/K=\langle aK\rangle\times\langle bK\rangle\times\langle cK\rangle$, and $dK=a^iKb^jKc^k K$ for some $i,j,k\in \ZZ_p$. Since $(a, b,c)\in \A_\Delta$, we have $a^iKb^jKc^k K=dK=(dK)^{(a,b,c)}=b^iKc^jKa^kK$, implying that $i=j=k$, and  since $(b,c,d)\in \A_\Delta$, we have $bK=(dK)^{(b,c,d)}=(a^iKb^iKc^iK)^{(b,c,d)}=a^iKc^iK(a^ib^ic^i)^iK$, implying that $i^2+i=0$ and $i^2=1$. It follows that $i=-i^2=-1$, that is, $dK=a^{-1}Kb^{-1}Kc^{-1}K$.  Since
$(c,d,e) \in \A_\Delta$, we have $eK=(dK)^{(c,d,e)}=(a^{-1}Kb^{-1}Kc^{-1}K)^{(c,d,e)}=a^{-1}Kb^{-1}Kd^{-1}K=cK$, and since $(a,b,c,d,e)\in \A_\Delta$, we have $aK=bK=cK=dK=eK$, a contradiction. This completes the proof of Claim~1.

\vskip 0.2cm
\noindent{\bf Claim 2:} $\cG_{6p^4}$ is connected.

Write $B=\langle H,\gamma^u\rangle$. We only need to show that $B=G$. Note that $\alpha \gamma^u=\alpha \gamma u^2=(a,b,c,d,e)(a,b)(c,d)u^2=(b,d,e)u^2\in B$. Since $(u^2)^{(b,e,d)}=(a^2b^{-2}c^{-2}d^2)^{(b,e,d)}=a^2e^{-2}c^{-2}b^2$ and $(u^2)^{(b, d,e)}=a^2d^{-2}c^{-2}e^2$,  we obtain $(\alpha \gamma^u)^3=(u^2)^{(b\ e\ d)}(u^2)^{(b\ d\ e)}u^2=a^6c^{-6}\in M$. This implies that $M\cap B\not=1$ as $p\geq 7$. Note that $M\cap B\unlhd B$. Since $\gamma=u\gamma^u u^{-1}\in MB$, we have  $G=ML=\langle \gamma,H,M\rangle\leq MB$, forcing $G=MB$. It follows that $M\cap B\unlhd G$, and by Claim 1, $M\cap B=M$, implying $M\leq B$. Thus, $G=MB=B$, as required.

\vskip 0.2cm
\noindent{\bf Claim 3:} $\cG_{6p^4}$ has a quasi-semiregular automorphism.

Note that $\hat{\alpha}\in\hat{G}$ is an automorphism of  $\cG_{6p^4}$ of order $5$. We only need to show that $\hat{\alpha}$ is a quasi-semiregular automorphism of $\cG_{6p^4}$. Let $(H\delta v)^{\hat{\alpha}}=H(\delta v)\alpha=H(\delta v)$ for some $\delta v\in G=LM$ with $\delta\in L$ and $v\in M$. It suffices to show that $H(\delta v)=H$.

Note that $H(\delta v)=H(\delta v)\alpha=H(\delta v)\alpha=H\alpha^{-1}(\delta v)\alpha=H\delta^\alpha v^\alpha$.
Since $M\unlhd G$, we have $v^\a v^{-1}\in \langle H,\delta,\delta^\a\rangle\cap M\leq L\cap M=1$, forcing $v^\a=v$. By Lemma \ref{lm:p45qs}, $\a$ is a fixed-point-free automorphism of $M$. It follows that $v=1$ and $H\delta=H\delta^\alpha=H\delta\alpha$, that is, $\alpha^{\delta^{-1}} \in H$. Noting that $\langle \alpha\rangle \cong \ZZ_5$ is the unique Sylow $5$-subgroup of $H \cong \AGL_1(5)$, we have $\delta^{-1} \in \N_{L}(\langle \alpha\rangle )=H$ as $L\cong
\Sy_5$. Therefore $H(\delta v)=H$, and $\hat{\alpha}$ fixes only the vertex $H$ in $\cG_{6p^4}$, that is,  $\hat{\alpha}$ is quasi-semiregular on $V(\cG_{6p^4})$.

\vskip 0.2cm
\noindent{\bf Claim 4.}  $\Aut(\cG_{6p^4})\cong \ZZ_p^4\rtimes\Sy_5 $.

Recall that $\hat{G}\cong \ZZ_p^4\rtimes\Sy_5$. We need to prove $\Aut(\cG_{6p^4})=\hat{G}$.  Suppose by contradiction that $\hat{G}$ is a proper subgroup of $\Aut(\cG_{6p^4}) $. Then we choose a subgroup $X$ of $\Aut(\cG_{6p^4})$ such that $\hat{G}$ is maximal in $X$. Let $K=\hat{G}_X$, the largest normal subgroup of $X$ contained in $\hat{G}$.

Suppose that $K \neq 1$. Since $G=M\rtimes L\cong \ZZ_p^4\rtimes\Sy_5$, by Claim 1, $M$ is the unique minimal normal subgroup in $G$, and so is $\hat{M}$ in $\hat{G}$. It follows that $\ZZ_p^4\cong \hat{M}\leq K$. Clearly, $\hat{M}$ is characteristic in $K$, and hence normal in $X$. Note that the quotient graph $(\cG_{6p^4})_{\hat{M}}$ is $K_6$. By Proposition \ref{pro:normalquo}, $X/\hat{M}\leq \Aut(K_6) =\Sy_6$ is  arc-transitive on $K_6$,  and since
$\Sy_5\cong \hat{G}/\hat{M}<X/\hat{M} $, we have $X/\hat{M}=\Aut(K_6)=\Sy_6$.   Then Claim 1 implies  $\A_6$ has a  $4$-dimensional irreducible representation over $ \ZZ_p (p\geq 7)$, but this is impossible because dimensions of absolutely irreducible representations of $\A_6$ are $5,8,9$ and $10$ by Atlas~\cite[p.5]{Atlas}.

Thus, $K=1$ and $X$ can be viewed as a primitive permutation group of degree $|X{:}\hat{G}|$ with $\hat{G}$ as a point stabilizer. In particular, $X \leq \Sy_{|X:\hat{G}|}$.

Note that $\gamma^u\a^2\in H\gamma^uH$ and $\gamma^u\a^2=\gamma u^2\a^2=\gamma\a^2(u^2)^{\a^2}$. For convenience, write $\varepsilon=\gamma\alpha^2=(a\ b)(c\ d)(a\ b\ c\ d\ e)^2=(a\ d\ e\ b\ c)$ and $v=(u^2)^{\a^2}=(a^2b^{-2}c^{-2}d^2)^{(a\ c\ e\ b\ d)}=c^2d^{-2}e^{-2}a^2$. Then $(\gamma^u\a^2)^5=(\varepsilon v)^5=\varepsilon^5v^{\varepsilon^4}v^{\varepsilon^3}v^{\varepsilon^2}v^{\varepsilon}v=
v^{\varepsilon^4}v^{\varepsilon^3}v^{\varepsilon^2}v^{\varepsilon}v$.  Since $\varepsilon$ is conjugate to $\alpha$ by $(b,d)(c,e) \in L$, $\hat{\varepsilon}$ is quasi-semiregular on $V(\cG_{6p^4})$, and by Proposition \ref{pro:qsprop}, $\hat{\varepsilon}$ induces an fixed-point-free automorphism of $\hat{M}$. By   Proposition \ref{fixed-point-free-atuo}, we have $v^{\varepsilon^4}v^{\varepsilon^3}v^{\varepsilon^2}v^{\varepsilon}v=1$, that is, $(\gamma^u\a^2)^5=1$.
It implies that $(H$, $H\gamma^u\alpha^2$, $H(\gamma^u\alpha^2)^2$, $H(\gamma^u\alpha^2)^3$, $H(\gamma^u\alpha^2)^4,H)$ is a $5$-cycle, and by \cite[Proposition 17.2]{Biggs}, $\cG_{6p^4}$ cannot be $3$-arc-transitive.
Since $\hat{G}_v\cong H\cong\AGL_1(5)$, by \cite[Theorem 1.1]{Feng-Guo}, $X_v$ is one of the following: \[\ZZ_2 \times \AGL_1(5),\ \ZZ_4 \times \AGL_1(5),\ \Sy_5,\ (\A_4 \times \A_5)\rtimes \ZZ_2 \mbox{ and }\Sy_4 \times\Sy_5.\]

Note that $|X{:}\hat{G}|=|X_v{:}\hat{G}_v|$  and $X$ is a $\{2,3,5,p \}$-group as $|X|=|V(\Ga)||X_v|=6p^4|X_v|$.
If $X_v=\ZZ_2 \times \AGL_1(5)$, $\ZZ_4 \times \AGL_1(5)$ or $\Sy_5$, then we have $|X{:}\hat{G}|=2$, $4$, $6$  and hence $X\leq\Sy_2$, $\Sy_4$ or $\Sy_6$, which is impossible.
If $X_v=(\A_4 \times \A_5)\rtimes \ZZ_2$, then $|X{:}\hat{G}|=72$, and by  \cite[Appendix B]{DM-book}, $\soc(X)=\PSL_2(71)$ or $\A_{72}$, of which both are not a $\{2,3,5,p \}$-group, a contradiction. If $X_v=\Sy_4 \times\Sy_5$ then $|X{:}\hat{G}|=144$, and by  \cite[Appendix B]{DM-book} $\soc(X)=\PSL_2(11)^2,\M_{11}^2,\M_{12}^2,\A_{12}^2,\M_{12},\PSL_3(3)$ or $\A_{144}$. For  $\soc(X)=\A_{12}^2,\PSL_3(3),\A_{144} $, $X$ is not a $\{2,3,5,p \}$-group, a contradiction, and for  $\soc(X)=\PSL_2(11)^2,\M_{11}^2,\M_{12}^2$ or $\M_{12}$, we have $p=11$ and $|X|_{11}=11^2$, contradicting $p^4 \mid |T|$. This competes the proof.
\end{proof}

\section{Final remark}

Let $\Ga$ be a symmetric graph admitting a quasi-semiregular automorphism. When $\Ga$ has valency at most $4$, all $\Ga$'s have been completely  classified in \cite{FengHKKM} and \cite{YF}. However, it is far away from a complete classification of such graphs $\Ga$ with valency $5$. By Theorem~\ref{th:val5}, such graphs $\Ga$ with valency $5$ are classified when $\Ga$ has an arc-transitive solvable group of automorphisms, and for the case when $\Ga$ has no arc-transitive solvable group of automorphisms, by Theorems~\ref{th:valp} and \ref{th:val5}, $\Ga$ is a normal cover of $\Ga_N$ with $N$ a nilpotent group, and $\Ga_N$ is $K_6$, or one of the graphs $\cG_{n}$ with $n \in  \{36$, $66$, $126$, $396$, $1456$, $2016$, $22176\}$. Theorem~\ref{th:infinite} gives an infinitely family of such graphs that are $\mz_p^4$-normal covers of $K_6$ for all prime $p\geq 7$, but we still have nothing to say about nilpotent normal covers of $\cG_{n}$.

Based on the results given in this paper and \cite{FengHKKM,YF}, we are inclined to think that symmetric graphs with quasi-semiregular automorphisms would be rare and not easy to construct. We would like to pose the following problem.

\medskip
\noindent{\bf Problem~B} Classify or construct the normal $N$-covers of $K_6$ and $\cG_{n}$ with $n \in  \{36$, $66$, $126$, $396$, $1456$, $2016$, $22176\}$, where $N$ is a nilpotent group.

%%%\affiliationone{% in this example, two authors share an institution
%  \affiliationone{
%  Fu-Gang Yin \\
%   Yan-Quan Feng \\
%   Jin-Xin Zhou \\
%   A-Hui Jia \\
%  Department of Mathematics \\
%  Beijing Jiaotong University \\
% 100044 Beijing \\
%  China
%   \email{18118010@bjtu.edu.cn\\
%   yqfeng@bjtu.edu.cn\\
%   jxzhou@bjtu.edu.cn\\
%   18121577@bjtu.edu.cn}}


\begin{thebibliography}{99}%
\bibitem{Magma}
{\bibname W. Bosma, J. Cannon } and {\sc C. Playoust}, `The MAGMA algebra system I: The user language',   {\em J. Symbolic Comput. }23 (1997) 235-265.



\bibitem{Biggs-graphs}
 {\sc N. Biggs}, `Three remarkable graphs', {\em Canad. J. Math. }25 (1973) 397-411.

\bibitem{Biggs}
 {\sc N. Biggs}, {\em Algebraic Graph Theory, second edition} (Cambridge University Press, Cambridge, 1993).

\bibitem{B-G}
 {\sc T. C. Burness } and {\sc M. Giudici},  {\em Classical Groups, Derangements and Primes} (Australian Mathematical Society Lecture Series (25), Cambridge University Press, Cambridge, 2016).

\bibitem{CSS}
 {\sc P. Cameron, J. Sheehan } and {\sc P. Spiga},  `Semiregular automorphisms of vertex-transitive cubic graphs', {\em Eur. J. Combin. } 27 (2006) 924-930.


\bibitem{Atlas}
 {\sc J. H. Conway, R. T. Curtis, S. P. Norton, R. A. Parker } and {\sc R. A. Wilson},  {\em Atlas of Finite Groups: Maximal Subgroups and Ordinary Characters for Simple Groups} (Clarendon Press, Oxford, 1985).

\bibitem{DM-book}
  {\sc J. D. Dixon } and {\sc B. Mortimer},  {\em Permutation Groups} (Sprimger-Verlag, New York Berlin Heidelberg, 1996).

\bibitem{DMMN}
 {\sc E. Dobson, A. Malni\v{c}, D. Maru\v{s}i\v{c} } and {\sc L. A. Nowitz}, `Semiregular automorphisms of vertex-transitive graphs of certain valencies', {\em J. Comb. Theory Ser. B }97 (2007) 371-380.

\bibitem{GFR}
 {\sc J. K. Doyle, T. W. Tucker } and {\sc M. E. Watkins}, `Graphical Frobenius representations', {\em J. Algebr. Comb.} 48 (2018) 405-428.

 \bibitem{DF19}
{\sc J.-L. Du } and {\sc Y.-Q. Feng}, `Tetravalent $2$-arc-transitive Cayley graphs on non-abelian simple groups',   {\em Commun. Algebra }47 (2019)  4565-4574.


\bibitem{FengHKKM}
{\sc Y.-Q. Feng, A. Hujdurovi\'{c}, I. Kov\'{a}cs, K. Kutnar } and {\sc D. Maru\v{s}i\v{c}}, `Quasi-semiregular automorphisms of cubic and tetravalent arc-transitive graphs', {\em Appl. Math. Comput. }353 (2019) 329-337.

\bibitem{GPV}
 {\sc M. Giudici, P. Poto\v{c}nik } and {\sc G. Verret}, `Semiregular automorphisms of edge-transitive graphs', {\em J. Algebr. Comb.} 40 (2014) 961-972.


\bibitem{GV}
 {\sc M. Giudici } and {\sc G. Verret}, `Arc-transitive graphs of valency twice a prime admit a semiregular automorphism', {\em Ars Math. Contemp.} 18 (2020) 179-186.

\bibitem{GX}
{\sc  M. Giudici } and {\sc J. Xu}, `All vertex-transitive locally-quasiprimitive graphs have a semiregular automorphism', {\em J. Algebr. Comb. }25 (2007) 217-232.

\bibitem{Godsil}
{\sc  C. D. Godsil}, `On the full automorphism group of a graph', {\em Combinatorica }1 (1981) 243-256.

\bibitem{DG}
{\sc D. Gorenstein},  {\em Finite Groups} (Chelsea Publishing Company, New York, 1980).

\bibitem{CFSG}
 {\sc D. Gorenstein, R. Lyons } and {\sc R. Solomon}, {\em The Classification of the Finite Simple Groups, Number 3} (American Mathematical Society, Providence, Rhode Island, 1998).


\bibitem{Feng-Guo}
 {\sc S.-T. Guo } and {\sc Y.-Q. Feng}, `A note on pentavalent $s$-transitive graphs', {\em Discrete Math. }312 (2012) 2214-2216.


\bibitem{Higman1957}
 {\sc G. Higman}, `Groups and rings having automorphisms without non-trivial fixed elements', {\em J. Lond. Math. Soc.} 32 (1957) 321-334.

 \bibitem{Hu}
  {\sc A. Hujdurovi\'{c}}, `Quasi $m$-Cayley circulants', {\em Ars Math. Contemp. }6 (2013) 147-154.

\bibitem{H1967}
 {\sc B. Huppert},  {\em Endliche Gruppen I} (Springer-Verlag, Berlin, 1967).

\bibitem{Huppert82b}
 {\sc B. Huppert } and {\sc N. Blackburn}, {\em Finite Groups III} (Springer-Verlag, New York, 1982).


\bibitem{K-Lie}
 {\sc P. B. Kleidman } and {\sc M. W. Liebeck}, {\em The Subgroup Structure of the Finite Classical Groups} (Cambridge University Press, Cambridge, 1990).



\bibitem{KN}
 {\sc G. Korchm\'{a}ros } and {\sc G. P. Nagy},   `Graphical Frobenius representations of non-abelian groups', {\em Ars Math. Contemp. }20 (2021), accepted, https://doi.org/10.26493/1855-3974.2154.cda.


 \bibitem{KMMM}
 {\sc  K. Kutnar, A. Malni\v{c}, L. Mart\'{i}nez } and {\sc D. Maru\v{s}i\v{c}},  `Quasi $m$-Cayley strongly regular graphs', {\em J. Korean Math. Soc. }50 (2013) 1199-1211.

\bibitem{KS}
 {\sc  K. Kutnar } and {\sc P. \v{S}parl},   `Distance-transitive graphs admit semiregular automorphisms', {\em Eur. J. Combin. }31 (2010) 25-28.

\bibitem{Li}
{\sc C. H. Li}, `Semiregular automorphisms of cubic vertex transitive graphs', {\em Proc. Amer. Math. Soc.} 136 (2008) 1905-1910.

\bibitem{Lor}
 {\sc  P. Lorimer},  `Vertex-transitive graphs: symmetric graphs of prime valency', {\em J. Graph Theory }8 (1984)  55-68.

\bibitem{M81}
 {\sc  D. Maru\v{s}i\v{c}},  `On vertex symmetric digraphs', {\em Discrete Math. }36 (1981) 69-81.

\bibitem{M2018}
{\sc  D. Maru\v{s}i\v{c}}, `Semiregular automorphisms in vertex-transitive graphs of order $3p^2$', {\em Electron. J. Combin.} 25 (2018) No. 2.25, 10 pp.

\bibitem{M2017}
{\sc  D. Maru\v{s}i\v{c}}, `Semiregular automorphisms in vertex-transitive graphs with a solvable group of automorphisms', {\em Ars Math. Contemp.} 13 (2017) 461-468.


\bibitem{MS}
 {\sc  D. Maru\v{s}i\v{c} } and {\sc R. Scapellato},  `Permutation groups, vertex-transitive digraphs and semiregular automorphism', {\em Eur. J. Combin. }19 (1998) 707-712.


\bibitem{M16}
 {\sc S. M. Mirafzal}, `Some other algebraic properties of folded hypercubes', {\em Ars Combin. }124 (2016)  153-159.

\bibitem{MSV}
{\sc J. Morris, P. Spiga } and {\sc G. Verret}, `Semiregular automorphisms of cubic vertex-transitive graphs and the abelian normal quotient method', {\em Electron. J. Combin.} 22 (2015) No. 3.32, 12 pp.

 \bibitem{Ro}
 {\sc M. A. Ronan}, `Semiregular graph automorphisms and generalized quadrangles', {\em J. Comb. Theory Ser. A} 29 (1980) 319-328.

 \bibitem{Sabi}
 {\sc B. O. Sabidussi}, `Vertex-transitive graphs', {\em Monash Math. }68 (1964) 426-438.


\bibitem{Spiga1}
{\sc P. Spiga}, `On the existence of Frobenius digraphical representations', {\em Electron. J. Combin. }25 (2018)   \#P2.6.

\bibitem{Spiga2}
{\sc P. Spiga}, `On the existence of graphical Frobenius representations and their asymptotic enumeration',  {\em J. Comb. Theory Ser. B }142 (2020) 210-243.

\bibitem{S1964}
 {\sc M. Suzuki}, `On a class of doubly transitive groups',  {\em Ann. Math.  }75 (1964) 105-145.


\bibitem{Thompson1959}
{\sc J. G. Thompson}, `Finite groups with fixed-point-free automorphisms of prime order',
{\em Proc. Natl. Acad. Sci. U.S.A. }45 (1959) 578-581.

\bibitem{V}
{\sc G. Verret}, `Arc-transitive graphs of valency $8$ have a semiregular automorphism', {\em Ars Math. Contemp. }8 (2015) 29-34.

%\bibitem{HW}
%H. Wielandt, Finite Permutation Groups (translated from the German by R. Bercov), Academic Press, New York and London, 1964.

\bibitem{Xu}
{\sc J. Xu}, `Semiregular automorphisms of arc-transitive graphs with valency $pq$', {\em Eur. J. Combin. }29 (2008)  622-629.

\bibitem{YF}
{\sc F.-G. Yin} and {\sc Y.-Q. Feng}, `Symmetric graphs of valency $4$ having a quasi-semiregular automorphism', {\em Appl. Math. Comput. }399 (2021) 126014.

\end{thebibliography}
\end{document}